\documentclass[reqno]{amsart}  %LaTeX2e
\usepackage{amsmath,amssymb,amsthm}
\theoremstyle{plain}
\newtheorem{theorem}{Theorem}[section]
\newtheorem{lemma}[theorem]{Lemma}
\newtheorem{remark}[theorem]{Remark}
\newtheorem{proposition}[theorem]{Proposition}

\newtheorem{corollary}[theorem]{Corollary}
\newtheorem{example}[theorem]{Example}

\numberwithin{equation}{section}
\allowdisplaybreaks[1]

\theoremstyle{definition}

\newtheorem{definition}[theorem]{Definition}

\theoremstyle{remark}

\newcommand{\bA}{{\mathbf A}}
\newcommand{\bB}{{\mathbf B}}
\newcommand{\bC}{{\mathbf C}}
\newcommand{\bD}{{\mathbf D
}}
\newcommand{\bU}{{\mathbf U}}
\newcommand{\To}{\Psi}
\newcommand{\Bo}{{\mathbf B}}
\newcommand{\Do}{{\mathbf D}}

\newcommand{\bk}{{\mathbf k}}

\newcommand{\cA}{{\mathcal A}}
\newcommand{\cE}{{\mathcal E}}
\newcommand{\cG}{{\mathcal G}}
\newcommand{\cH}{{\mathcal H}}
\newcommand{\cL}{{\mathcal L}}
\newcommand{\cM}{{\mathcal M}}
\newcommand{\cN}{{\mathcal N}}
\newcommand{\cO}{{\mathcal O}}

\newcommand{\cU}{{\mathcal U}}

\newcommand{\cX}{{\mathcal X}}
\newcommand{\cY}{{\mathcal Y}}
\newcommand{\bbD}{{\mathbb D}}

\newcommand{\C}{{\mathbb C}}

\newcommand{\bin}{{\mu}}

\newcommand{\bc}[1]{\left(\begin{matrix} #1
                \end{matrix}\right)}
\newcommand{\bcs}[1]{\left(\begin{smallmatrix} #1
                \end{smallmatrix}\right)}
\newcommand{\Ob}{\boldsymbol{{\mathfrak O}}}
\newcommand{\Gr}{{\boldsymbol{\mathfrak G}}}

\begin{document}

\title[Weighted Bergman spaces]{Weighted Bergman spaces:
shift-invariant subspaces and input/state/output linear systems}
\author[J. A. Ball]{Joseph A. Ball}
\address{Department of Mathematics,
Virginia Tech,
Blacksburg, VA 24061-0123, USA}
\email{joball@math.vt.edu}
\author[V. Bolotnikov]{Vladimir Bolotnikov}
\address{Department of Mathematics,
The College of William and Mary,
Williamsburg VA 23187-8795, USA}
\email{vladi@math.wm.edu}

\begin{abstract}
         It is well known that subspaces of the Hardy space over the unit
         disk which are invariant under the backward shift occur as the
         image of an observability operator associated with a
         discrete-time linear system with stable state-dynamics, as well as
         the functional-model space for a Hilbert space contraction
         operator, while forward shift-invariant subspaces have a 
	 representation in terms of an inner function.
	 We discuss several variants of these statements in 
	 the context of weighted  Bergman spaces on the unit disk.
\end{abstract}

\subjclass{ 47A48, 47A57}
\keywords{Operator-valued functions, Bergman space, Beurling-Lax representations, transfer-function realization}

\maketitle

\section{Introduction}  \label{S:Intro}
\setcounter{equation}{0}

Let $H^{2}$ be the standard Hardy space over the open unit disk 
$$
H^{2} = \{ f(z) = \sum_{n=0}^{\infty} f_{n}z^{n} \colon 
\sum_{n=0}^{\infty} |f_{n}|^{2} < \infty\}
$$ and we let $S \colon f(z) 
\mapsto z f(z)$ be the shift operator on $H^{2}$.  The classical 
theorem of Beurling \cite{Beurling} asserts that any $S$-invariant 
subspace $\cM$ of $H^{2}$ has the form $\cM = \theta \cdot H^{2}$ 
where $\theta$ is an inner function (analytic on the disk with 
boundary-value function on the unit circle having modulus 1 almost 
everywhere).  The result was extended to the vector-valued case by Lax 
\cite{Lax} (for the finite-dimensional case) and by Halmos \cite{Halmos} 
(for the general case). The main thrust of Beurling's paper was the 
development of a theory of inner-outer factorization for $H^{2}$ and 
$H^{\infty}$-functions; this theory was then used to arrive at the 
famous characterization of invariant subspaces $\cM = \theta \cdot 
H^{2}$. 

\smallskip

There are now a number of more operator-theoretic and/or 
system-theoretic proofs which better handle the vector-valued case.
For $\cY$ a Hilbert space, we write 
$H^{2}(\cY)$ for the Hardy space $H^{2}(\cY) = H^{2}\otimes \cY$ 
of $\cY$-valued functions and we let $\cM$ be a shift-invariant 
subspace of $H^{2}(\cY)$.  We single out four distinct approaches 
toward what we shall call simply the Beurling-Lax theorem for the vector-valued case.

\medskip
    
{\bf (1)}  Find a Hilbert space $\cX$ (playing the role of a {\em 
    state space} from the system theory point of view) and construct 
    operators $C \colon \cX \to \cY$ and $A \colon \cX \to \cX$ so 
    that $\left[ \begin{smallmatrix} A \\ C \end{smallmatrix} 
    \right]$ is isometric ($A^{*}A + C^{*}C = I_{\cX}$) and so that 
    $\cM^{\perp}$ has the representation
 \begin{equation}  \label{Mperprep}
 \cM^{\perp} = \{ C (I - zA)^{-1} x \colon x \in \cX\}.
 \end{equation}
 One such choice is $\cX = \cM^{\perp}$ with $C = E|_{\cM^{\perp}}$ 
 (where $E \colon f \mapsto f(0)$ is evaluation at zero and $A = 
 S^{*}|_{\cM^{\perp}}$). We note that $H^{2}(\cY)$ is a reproducing 
 kernel Hilbert space with reproducing kernel given by the Szeg\H{o} 
 kernel $\frac{I_{\cY}}{1 - z \overline{\zeta}}$.
  Then $\cM^{\perp} \subset H^{2}(\cY)$, as a subspace of 
  $H^{2}(\cY)$, is also a  reproducing kernel Hilbert space in its 
  own right.  One can identify the reproducing kernel for 
  $\cM^{\perp}$ explicitly as
  $$ \cM^{\perp} = \cH(K_{C,A})\quad\text{where}\quad K_{C,A} = C (I - 
  zA)^{-1} (I - \overline{\zeta} A^{*})^{-1} C^{*}.
  $$
  It then follows that $\cM$ has Beurling-Lax-Halmos representation 
  $\cM = \Theta \cdot H^{2}(\cU)$ where $\Theta$ is any solution of 
  the kernel factorization problem
  \begin{equation}   \label{kernelfact}
      \frac{ \Theta(z) \Theta(\zeta)^{*}}{1 - z \overline{\zeta}} = 
  \frac{I_{\cY}}{1 - z \overline{\zeta}} - K_{C,A}(z,\zeta)
  \end{equation}
  from which it follows that the reproducing kernel $k_{\cM}(z, 
  \zeta)$ for the subspace $\cM$ is
  \begin{equation}  \label{kerM}
      k_{\cM}(z, \zeta) = \frac{\Theta(z) \Theta(\zeta)^{*}}{ 1 - z 
      \overline{\zeta}}.
   \end{equation}
  One way to construct such a $\Theta$ is as 
  \begin{equation}   \label{realTheta}
  \Theta(z) = D + z C (I - zA)^{-1} B
  \end{equation}
  where $\left[ \begin{smallmatrix} B \\ D 
\end{smallmatrix} \right]$ solves the Cholesky factorization problem
$$
  \begin{bmatrix} B \\ D \end{bmatrix} \begin{bmatrix} B^{*} & D^{*} 
      \end{bmatrix} = \begin{bmatrix} I & 0 \\0 & I \end{bmatrix} - 
      \begin{bmatrix} A \\ C \end{bmatrix} \begin{bmatrix} A^{*} & 
	  C^{*} \end{bmatrix}.
$$
We refer to this approach as the {\em de Branges-Rovnyak--Potapov 
approach}, as the first part (the identification of $\cM^{\perp}$ as 
$\cH(K_{c,A}$ ) is prominent in the work of de Branges-Rovnyak 
\cite{dBR1, dBR2} while the second step (factorization of the kernel 
\eqref{kernelfact}) is prominent in the Potapov-school approach to 
interpolation (see \cite{Potapov}).  

\smallskip

{\bf (2)} Set $A = P_{\cM} S^{*}|_{\cM}$ and $C = P_{\cE}|_{\cM}$, $\cU = 
\cM \ominus S \cM$ and consider the operator 
$\cO_{C,A} \colon \cM \subset H^{2}(\cY) \to H^{2}(\cU)$ given by
$$
  \cO_{\C,A} \colon x \mapsto C (I - zA)^{-1} x.
$$
Then the inner representer $\Theta$ for $\cM$ can be obtained as
$$
  M_{\Theta} = \left( \cO_{C,A}\right)^{*} \colon H^{2}(\cU) \to 
  H^{2}(\cY).
$$
This approach works more generally for $S$-invariant subspaces of 
$H^{2}(\cY)$ contractively included in $H^{2}(\cY)$.
We refer to this approach as the {\em Rosenblum-Rovnyak approach}, as 
this is amounts to the proof of the Beurling-Lax Theorem in the book 
\cite{RRbook}.

\smallskip

{\bf (3)} Note that $\cM$ decomposes as $\cM = \bigoplus_{k \ge 0} 
\left(S^{k} \cM \ominus S^{k+1} \cM\right)$.  It turns out that the 
reproducing kernel for the subspace $S^{k} \cM \ominus S^{k+1} \cM$ 
has the form 
\begin{equation}  \label{KerSkm-Sk+1M}
 k_{S^{k} \cM \ominus S^{k+1} \cM}(z, \zeta) = z^{k} 
 \overline{\zeta}^{k} \Theta(z) \Theta(\zeta)^{*}
\end{equation}
with $\Theta(z)$ independent of $k$. Thus the operator of 
multiplication by $z^{k} \Theta(z)$ maps the coefficient space $\cU$ 
isometrically onto $S^{k} \cM \ominus S^{k+1} \cM$ and we recover 
the kernel function for $\cM$ to be
$$
k_{\cM}(z, \zeta) = \sum_{k=0}^{\infty} z^{k} \overline{\zeta}^{k} 
\Theta(z) \Theta(\zeta)^{*}
$$
in agreement with \eqref{kerM}.  It now follows that $\Theta$ is a 
Beurling-Lax representer for $\cM$.  We refer to this as the {\em iterated Halmos 
wandering-subspace approach} since the construction of the inner 
representer via looking at the wandering subspace $\cM \ominus S \cM$ 
is the main idea of the proof for the abstract shift-operator setting 
in \cite{Halmos}.

\smallskip

{\bf (4)} Construct $\Theta$ so that $M_{\Theta} \cU = \cM \ominus S 
\cM$, or $k_{\cM \ominus S \cM} = \Theta(z) \Theta(\zeta)^{*}$ (i.e., 
$\Theta$ is the same as in Approach 3).  
In this shift-operator context, then the multiplication 
operator $M_{\Theta}$ extends to be an isometry from $H^{2}(\cU)$ 
into $H^{2}(\cY)$ with $M_{\Theta} H^{2}(\cU) = \cM$ and once again 
$k_{\cM}(z, \zeta) = \Theta(z)\left( \frac{ I_{\cU} }{1 - z 
\overline{\zeta}} \right) \Theta(\zeta)^{*}$, so once again $\Theta$ 
is the Beurling-Lax representer for $\cM$. 
We refer to this as the {\em uniterated Halmos wandering subspace 
approach}.  

\smallskip

In all four approaches, one ultimately arrives at the realization 
formula \eqref{realTheta} for the inner function $\Theta$ where 
the system matrix (also called operator colligation) $\bU = \left[ 
\begin{smallmatrix} A & B \\ C & D \end{smallmatrix} \right]$ is 
    unitary.  This has the interpretation that the associated 
    input/state/output linear system
\begin{equation}   \label{sys}
\Sigma_{\bf U}: \quad     \left\{ \begin{array}{rcl} x(n+1) & = & A x(n) + B u(n) \\
y(n) & = & C x(n) + D u(n) \end{array} \right.
\end{equation}
is {\em conservative}: the energy stored by the state at time $n$ 
($\| x(n+1)\|^{2} - \| x(n) \|^{2}$) is exactly compensated by the 
net energy put into the system from the outside environment 
($\|u(n)\|^{2} - \| y(n)\|^{2}$), with a similar property for the 
adjoint system.  We note that application of the $Z$-transform
$$
  \{x(n)\}_{n \in {\mathbb Z}_{+}} \mapsto \widehat x(z): = 
  \sum_{n=0}^{\infty} x(n) z^{n}
$$
converts the system equations \eqref{sys} to
\begin{equation}   \label{freqsys}
    \begin{array}{rcl} z^{-1} \left( \widehat x(z) - x(0) \right) & = 
	& A \widehat x(z) + B \widehat u(z) \\
	\widehat y(z) & = & C \widehat x(z) + D \widehat u(z)
	\end{array}
\end{equation}
which can then be solved for $\widehat x(z)$ and $\widehat y(z)$:
$$  \begin{array}{rcl}
\widehat x(z) & =&  (I - zA)^{-1} x(0) + z (I - zA)^{-1} B \widehat u(z)  \\
\widehat y(z) & = &
 \left(\cO_{C,A}x(0)\right)(z) + \Theta_{\bU}(z) \cdot \widehat u(z)
 \end{array}
$$
where 
\begin{equation}  \label{obsop}
    \cO_{C,A} \colon x \mapsto C (I - zA)^{-1} x
\end{equation}
is the {\em observability operator} for the system $\Sigma_{\bU}$ 
(generating the $Z$-transform $\widehat y(z)$ of the output signal 
from the initial state $x(0) = x$ when the input signal is taken to 
be zero), and where
\begin{equation}   \label{transfunc}
    \Theta_{\bU}(z) =  D + z C (I - zA)^{-1} B
 \end{equation}
 is the {\em transfer function} of the system $\Sigma_{\bU}$ (having 
 the property that multiplication by $\Theta_{\bf U}$ on the 
 $Z$-transform $\widehat u(z)$ of the input signal generates the 
 $Z$-transform $\widehat y(z)$ of the output when the initial state 
 $x(0)$ is taken to be zero).  We note that the observability 
 operator $\cO_{C,A}$ appears in the representation \eqref{Mperprep} 
 for $\cM^{\perp}$ and that the formula \eqref{realTheta} exhibits 
 the Beurling-Lax representer for $\cM$ as the transfer function for the 
 conservative i/s/o linear system $\Sigma_{\bU}$.
  Many of these ideas connecting  operator-valued 
  $H^{\infty}$-functions with systems theory ideas can be found 
  already in the survey paper of Ball-Cohen \cite{BC}.

\smallskip

It was only much later that researchers began looking for analogues 
of Beurling-Lax representations for shift-invariant subspaces of weighted 
Bergman spaces (see \cite{HKZ, DS} and the references there for the 
scalar-valued case).  
For a Hilbert space $\cY$ and an integer $n\ge 1$, we denote by $\cA_n(\cY)$ the 
reproducing kernel Hilbert space
with reproducing kernel
\begin{equation}
\bk_n(z,\zeta)=\frac{1}{(1-z\overline{\zeta})^n}I_\cY
\label{0.2}
\end{equation}
which is clearly positive on $\bbD\times \bbD$. The space $\cA_n(\cY)$ can be alternatively 
characterized as the Hilbert space of $\cY$-valued functions $f$  analytic 
in the open unit disk $\bbD$ and with finite norm $\|f\|_{A_n(\cY)}$:
\begin{equation}
\cA_n(\cY)=\left\{f(z)={\displaystyle\sum_{j\ge 0}f_j z^j} \colon 
\|f\|^2_{\cA_n(\cY)}:={\displaystyle \sum_{j\ge 0}\bin_{n,j} \cdot 
\|f_j\|_{\cY}^2}<\infty\right\}
\label{0.1}
\end{equation}
where the weights $\mu_{n,k}$'s are defined by 
\begin{equation}
\bin_{n,j}:=\frac{1}{\binom{j+n-1}{j}} =
\frac{j!(n-1)!}{(j+n-1)!}.
\label{defmu}
\end{equation}
We let $S_{n}$ denote the Bergman shift operator of weight-index $n$ 
acting on $\cA_{n}(\cY)$ given by multiplication by the coordinate 
functions:  $S_{n} \colon f(z) \mapsto z f(z)$ for $f \in 
\cA_{n}(\cY)$.
The space $A_1(\cY)$ is therefore the 
standard Hardy space $H^2(\cY)$ of the unit disk with $S_{1} = S$ 
equal to the standard Hardy-space shift operator, and for $n\ge 2$ the space $\cA_n(\cY)$ is 
the standard weighted Bergman space. 

\smallskip

Even for the case $\cY = {\mathbb C}$, one can no longer use Blaschke 
products to collect the zeros of a Bergman-space function since the 
zero set of a Bergman space function need not satisfy the Blaschke 
growth condition.  A major advance came with the work of Hedenmalm 
\cite{hend1} who constructed Bergman-inner functions, i.e., a 
function $\theta$ so that $\theta {\mathbb C} = \cA_{2} \ominus S_{2} 
\cA_{2}$ as the solution of an extremal problem.  Such a $\theta$ has 
the {\em contractive divisor property} $\| \theta^{-1} f \|_{\cA_{2}} 
\le \| f \|_{\cA_{2}}$, thereby improving earlier results of Horowitz 
\cite{Hor}.  Since the work of Apostol-Bercovici-Foias-Pearcy 
\cite{ABFP} it is known that shift-invariant subspaces $\cM \subset 
\cA_{2}$ can have arbitrary index ${\rm ind} \, \cM:= \dim \left( \cM \ominus S_{2} 
\cM\right)$.  Nevertheless, the seminal work of 
Aleman-Richter-Sundberg \cite{ars} with later extensions by Shimorin 
\cite{sh1} showed that in all cases we recover $\cM$ as $\cM = 
\overline{\rm span}_{k \ge 0} S_{n}^{k} (\cM \ominus S_{n} \cM)$ (a 
partial analogue of the representation of $\cM$ in approach 4 above), 
at least for the cases $n=2,3$.  More recently, Olofsson \cite{oljfa, 
olieot, olaa} initiated the study of operator-valued Bergman-inner 
functions for standard weighted Bergman spaces as an object 
of independent interest. In particular he suggested a certain 
 time-invariant input/state/output linear system of higher order (past values of the 
 states and of the inputs enter the state-update equations rather 
 than just the current state and input values) as the time-domain 
 explanation for the input-output map corresponding to multiplication 
 by the Bergman inner function.
 
\smallskip

 The purpose of the present paper is to further enhance the theory of 
 Beurling-Lax representations and to sharpen the connections with 
 the theory of input/state/output  linear systems for the 
 Bergman-space setting.  In particular, we show that each of the four 
 approaches to Beurling-Lax representations sketched above (which 
 blur into each other in the classical case) actually lead to four 
 distinct kinds of theories for the Bergman-space setting.  Our 
 results for the first three approaches are new for the Bergman-space 
 setting, while the fourth approach is most closely connected with the notion of Bergman 
 inner function as appearing in the work of Hedenmalm, Olofsson, and 
 Shimorin. Specifically, for $\cM \subset \cA_{n}(\cY)$, the first approach leads to the 
 representation for $\cM$ as 
$$
\cM = M_{F} \left( \bigoplus_{j=1}^{n} 
 \cA_{j}(\cU_{j}\right),\quad\mbox{where}\quad F(z) = \begin{bmatrix} F_{1}(z) & \cdots & 
 F_{n}(z) \end{bmatrix}
$$ 
and where $F_{j}$ is a bounded multiplier from 
 $\cA_{j}(\cU_{j})$ into $\cA_{n}(\cY)$ for each $j=1, \dots, n$ with 
 the additional property that the multiplication operator 
 $\begin{bmatrix} M_{F_{1}} & \cdots & M_{F_{n}} \end{bmatrix}$ is a 
 partial isometry from $\bigoplus_{j=1}^{n} \cA_{j}(\cU_{j})$ into 
 $\cA_{n}(\cY)$ (see Theorem \ref{T:BL}).  The second approach leads 
 to the identification of certain conditions when one has a 
 representation $\cM = M_{F_{n}} \cA_{n}(\cU_{n})$ (i.e., one can 
 take $F_{1}, \dots, F_{n-1}$ all equal to zero in the previous 
 representation) and applies to the more general situation where 
 $\cM$ has its own norm and is contractively included in 
 $\cA_{n}(\cY)$.  The third approach leads to the construction of a 
 {\em Bergman inner family} $\Theta_{0}, \Theta_{1}, \Theta_{2}, 
 \dots$ of multipliers, where $ z^{ k} \Theta_{k}(z)$ maps the 
 coefficient space $\cU_{k}$ isometrically into $\cA_{n}(\cY)$ so 
 that   $\begin{bmatrix} M_{\Theta_{0}} & M_{\Theta_{1}} & M_{\Theta_{2}} & 
 \cdots \end{bmatrix}$ maps the time-varying Hardy space 
 $\bigoplus_{k=0}^{\infty} z^{k} \cU_{k}$ isometrically onto $\cM 
 \subset \cA_{n}(\cY)$.  This perhaps is the most compelling new 
 Beurling-Lax representation for the Bergman-space setting and has 
 the most striking connections with systems theory.  Namely, the 
 multiplication operator $\begin{bmatrix} M_{\Theta_{0}} & 
 M_{\Theta_{1}} & M_{\Theta_{2}} & 
 \cdots \end{bmatrix}$ can  be identified as the  $Z$-transform  of 
 the input-output map of a certain time-varying linear system (see 
formula  \eqref{m1} below) having certain additional metric properties 
 (see \eqref{isom}, \eqref{wghtcoisom} and Section \ref{S:syscon}).
 When one specializes to the classical case $n=1$, this time-varying 
 linear system collapses to the time-invariant linear system 
 \eqref{sys} with transfer function equal to the Beurling-Lax 
 representer $\Theta(z)$ in the classical case. 
 
\smallskip

 To make these system theory connections precise, we delve into the 
 structure of observability operators and observability gramians, 
 operator resolvents, and transfer functions more general than in the 
 classical case.  Namely, observability operators of the form
 \begin{equation}   \label{cOn}
 \cO_{n,C,A} \colon x \mapsto C (I - zA)^{-n} x = 
 \sum_{j=0}^{\infty} \bcs{ j+n-1 \\ j} (A^{j} x) z^{j}
 \end{equation}
 as well as a $k$-shifted version
 $$
  \Ob_{n,k,C,A} \colon x \mapsto \sum_{j=0}^{\infty} \bcs{j+k + n-1 
  \\ j+k } (A^{j} x) z^{j},
 $$
 as well as functions ${\boldsymbol \Theta}$ having a realization 
 of the form
 \begin{equation}   \label{boldTheta}
     {\boldsymbol \Theta}(z) = D + z \Ob_{n,k,C,A}(z) B
 \end{equation}
 come up.  Indeed such formulas appear already in the work of 
 Olofsson \cite{olaa}, at least for the cases $k=0,1$.  We show that 
 $\cO_{n,C,A}$ \eqref{cOn} arises as the observability operator for the 
 time-varying system \eqref{m1} and that operator-valued functions of
 the form ${\boldsymbol \Theta}$ \eqref{boldTheta} arise naturally in the construction 
 of kernel functions for subspaces of the form $S_{n}^{k} \cM \ominus 
 S_{n}^{k+1} \cM$ (where $\cM$ is an $S_{n}$-invariant subspace of 
 $\cA_{n}(\cY)$).
 
\smallskip

 As preliminaries to the exposition of these ideas, in Section 
 \ref{S:powerseries} we develop a 
 calculus of shifted power geometric series 
as preparation for their use in the  corresponding operator functional  calculus. 
 In Section \ref{STM} we present the form of the time-varying system 
 \eqref{m1} and develop its behavior under the $Z$-transform for the 
 general setting where no metric constraints are imposed.  Section 
 \ref{S:obs} extends standard stability notions 
to the Bergman-setting where the system has the 
form \eqref{m1}.  Section \ref{S:NC-Obs} identifies the 
reproducing-kernel structure on ranges of observability operators; a 
key result is Theorem \ref{T:frakOkernel} which identifies the 
reproducing kernel for a subspace of $\cA_{n}(\cY)$ of the form 
$S_{n}^{k} \operatorname{Ran} \Ob_{n,k,C,A}$.  The next Section 
\ref{S:metric} develops the metric properties for the system 
operators in the system \eqref{m1} which will be needed for the 
Bergman-space Beurling-Lax representations to come.  Finally Section 
\ref{S:BL} develops our Beurling-Lax representation theorems for the 
Bergman-space setting (all four approaches) while the concluding 
Section \ref{S:syscon} makes the connections with the linear system 
\eqref{m1} (with the additional metric constraints imposed) precise.
 
 \section{Power-series representations for generalized geometric series}
\label{S:powerseries}
 
 We start with recording an assortment of power series  expansions  which will   play a key role 
in the sequel. We let  $S_{1}^{*}$ be the standard 
backward shift operator acting on  formal power series according to
    \begin{equation}   \label{S1*}
	S_{1}^{*} \colon \sum_{j=0}^{\infty} a_{j} z^{j} \mapsto 
	\sum_{j=0}^{\infty} a_{j+1} z^{j} \text{ or, equivalently, }
	S_{1}^{*} \colon f(z) \mapsto \frac{ f(z) - f(0)}{z}.
\end{equation}
Let us introduce the notation
\begin{equation}   \label{apr1}
 R_{n}(z)= (1-z)^{-n}\quad\mbox{and}\quad  R_{n,k}(z)= \left(S_{1}^{*}\right)^{k} (1-z)^{-n},
\end{equation}
so that $R_{n,0}(z) = R_{n}(z)$, and  record the power series expansions
\begin{equation}
 R_{n}(z)=\sum_{j=0}^{\infty} \bcs{n+j-1\\ j}\cdot z^{j},\qquad
R_{n,k}(z)=\sum_{j=0}^{\infty} \bcs{n+j+k-1\\ j+k}\cdot z^{j}.
\label{07}
\end{equation}
The first  representation follows from successive  
  term-by-term differentiation of the geometric series $(1-z)^{-1} =
 \sum_{j\ge 0} z^{j}$ whereas the second follows from
the first and the definition \eqref{S1*}  of $S_{1}^{*}$. 
It is seen from \eqref{07}  that  for the special case $n=1$,
  \begin{equation}   \label{n=1}
      R_{1,k}(z) = R_{1}(z) \quad\text{for all}\quad k=0,1, \dots
   \end{equation}
The formula
 \begin{equation} 
R_{n,k}(z)=\sum_{\ell=1}^{n} \bcs{\ell+k-2 \\ \ell - 1}R_{n-\ell+1}(z)
\quad\mbox{for}\quad k\ge 1
\label{07-1}
\end{equation}
follows from \eqref{07} and the Chu-Vandermonde identity for binomial coefficients (see
e.g.~\cite[page 50]{Loehr})
    \begin{equation}   \label{ChuVan}  
\bcs{n+j+k-1\\ j+k} = \sum_{\ell = 1}^{n} \bcs{\ell + k-2\\ \ell-1}\cdot \bcs{n+j-\ell\\ j},
\end{equation}   
according to which indeed
\begin{align*}
R_{n,k}(z)=\sum_{j=0}^{\infty} \sum_{\ell=1}^{n} \bcs{\ell+k-2 \\ \ell - 1}
    \bcs{n+j-\ell \\ j} z^{j}
    & = \sum_{\ell=1}^{n} \bcs{\ell + k-2\\ \ell - 1 }\cdot \left(
    \sum_{j=0}^{\infty} \bcs{n+j-\ell\\ j} z^{j} \right) \\
    & = \sum_{\ell=1}^{n} \bcs{\ell + k - 2\\ \ell -1}\cdot R_{n-\ell+1}(z)
\end{align*}
where we made use of \eqref{07} again in the last step. An easy corollary of definitions \eqref{apr1} is 
the following.  
\begin{lemma}   \label{L:basic}
      The functions $R_{n,k}(z)$ given by \eqref{apr1} satisfy the
      recursion
  \begin{equation}   \label{difquotRnk}
    R_{n,k}(z) = \bcs{n+k-1 \\ k} + z R_{n,k+1}(z).
  \end{equation}
  \end{lemma}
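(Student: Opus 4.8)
The plan is to read the recursion straight off the definition of $R_{n,k}$ as an iterated backward shift, using the difference-quotient description of $S_1^*$ recorded in \eqref{S1*}. The key observation is a one-step shift identity: since \eqref{apr1} defines $R_{n,k}(z) = (S_1^*)^k (1-z)^{-n}$, applying one further copy of $S_1^*$ gives $(S_1^*)^{k+1}(1-z)^{-n}$, that is,
$$
R_{n,k+1}(z) = S_1^* R_{n,k}(z).
$$

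Next I would feed this into the second (difference-quotient) form of the operator in \eqref{S1*}, applied to $f = R_{n,k}$, which yields $R_{n,k+1}(z) = \bigl(R_{n,k}(z) - R_{n,k}(0)\bigr)/z$. Multiplying through by $z$ and rearranging gives $R_{n,k}(z) = R_{n,k}(0) + z R_{n,k+1}(z)$, so the recursion \eqref{difquotRnk} holds once the constant term is identified. To pin down that constant, I would simply extract the $j=0$ coefficient from the power-series expansion \eqref{07} of $R_{n,k}$, which is $\bcs{n+k-1 \\ k}$; hence $R_{n,k}(0) = \bcs{n+k-1 \\ k}$ and the proof is complete.

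There is no real obstacle: the statement is a one-line consequence of the definitions once the one-step shift identity $R_{n,k+1} = S_1^* R_{n,k}$ is noted. The only point requiring a word of care is that all manipulations take place at the level of formal power series (equivalently, functions analytic near the origin), so that both the difference-quotient form of $S_1^*$ and the extraction of the constant term are legitimate; with that understood, the argument is immediate.
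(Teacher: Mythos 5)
Your proof is correct and follows essentially the same route as the paper: both establish $R_{n,k+1} = S_1^* R_{n,k}$ from the definition \eqref{apr1}, apply the difference-quotient form of $S_1^*$ in \eqref{S1*}, and read off $R_{n,k}(0) = \bcs{n+k-1 \\ k}$ from the expansion \eqref{07}.
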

  \begin{proof}  Since $R_{n,k+1}(z) = \left( S_{1}^{*}\right)^{k+1}
      (1 -z)^{-n} = S_{1}^{*} \left( (S_{1}^{*})^{k} (1-z)^{-n}
      \right) = S_{1}^{*} R_{n,k}(z)$, we have
      $$ R_{n,k+1}(z) = S_{1}^{*} R_{n,k}(z) = [ R_{n,k}(z) -
      R_{n,k}(0)]/z.
      $$
      From the expansion for $R_{n,k}(z)$ in \eqref{07} we see that 
      $R_{n,k}(0) = \bcs{n+k-1 \\ k}$ and
      the formula \eqref{difquotRnk} follows.   
        \end{proof}
	
The following analogue of the well-known formula  
$\sum_{j=0}^{N} z^{j} = \frac{1 - z^{N+1}}{1-z}$ for the sum of the 
truncated geometric series will be useful in the sequel.

\begin{proposition}   \label{P:trunBergseries}
    The truncation of the infinite series representation for $R_{n}(z)$ 
    in \eqref{07} has the explicit summation formula:
    \begin{equation}   \label{trunBergseries}
	\sum_{j=0}^N\bcs{n+j-1 \\ j}z^j = 
	\frac{1}{(1-z)^n}-\sum_{j=1}^{n}\bcs{N+n\\ 
	N+j}\frac{z^{N+j}}{(1-z)^{j}}.
\end{equation}
 \end{proposition}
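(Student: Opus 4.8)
The plan is to prove the equivalent statement that the \emph{tail} of the series representation equals the correction term on the right of \eqref{trunBergseries}. By the first expansion in \eqref{07} the series $R_{n}(z)=\sum_{j\ge 0}\binom{n+j-1}{j}z^{j}$ converges for $z\in\bbD$, so
\[
\frac{1}{(1-z)^{n}}-\sum_{j=0}^{N}\binom{n+j-1}{j}z^{j}=\sum_{j=N+1}^{\infty}\binom{n+j-1}{j}z^{j},
\]
and \eqref{trunBergseries} is equivalent to the claim that this tail equals $Q_{n,N}(z):=\sum_{j=1}^{n}\binom{N+n}{N+j}\frac{z^{N+j}}{(1-z)^{j}}$. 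I would therefore expand $Q_{n,N}$ as a power series and match it coefficient-by-coefficient against the tail.

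The key step is to expand each summand of $Q_{n,N}$ by applying \eqref{07} once more, now with weight-index $j$, to write $\frac{z^{N+j}}{(1-z)^{j}}=\sum_{m\ge 0}\binom{j+m-1}{m}z^{N+j+m}$. Substituting and collecting the coefficient of $z^{N+r}$ (the lowest power that occurs is $z^{N+1}$, from $j=1$, $m=0$, so that $Q_{n,N}=O(z^{N+1})$ in agreement with the tail), the identity \eqref{trunBergseries} reduces to the purely combinatorial statement
\[
\sum_{j=1}^{\min(n,r)}\binom{N+n}{N+j}\binom{r-1}{r-j}=\binom{n+N+r-1}{N+r}\qquad(r\ge 1).
\]
Rewriting $\binom{N+n}{N+j}=\binom{N+n}{n-j}$, $\binom{r-1}{r-j}=\binom{r-1}{j-1}$ and $\binom{n+N+r-1}{N+r}=\binom{n+N+r-1}{n-1}$, and reindexing by $i=j-1$, this becomes $\sum_{i\ge 0}\binom{N+n}{n-1-i}\binom{r-1}{i}=\binom{N+n+r-1}{n-1}$, which is precisely the Chu--Vandermonde identity underlying \eqref{ChuVan} (parameters $N+n$, $r-1$ and upper target $n-1$).

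I expect the only delicate point to be the index bookkeeping in the coefficient comparison: one must check that the double sum reorganizes so that the summation range is exactly $0\le i\le\min(n,r)-1$ and that the three binomial parameters line up with the form of Chu--Vandermonde. As an alternative route I could instead induct on $N$: the base case $N=0$ is the binomial expansion $\sum_{j=0}^{n}\binom{n}{j}\frac{z^{j}}{(1-z)^{j}}=\bigl(1+\tfrac{z}{1-z}\bigr)^{n}=(1-z)^{-n}$, and the inductive step reduces to the telescoping relation $Q_{n,N}(z)-Q_{n,N+1}(z)=\binom{n+N}{N+1}z^{N+1}$, verified after clearing the common factor $(1-z)^{-1}$. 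Of the two, the coefficient-matching argument via Chu--Vandermonde seems cleanest, since it reuses the expansion \eqref{07} and the identity \eqref{ChuVan} already in hand.
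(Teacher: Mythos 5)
Your proof is correct, but it takes a genuinely different route from the paper's. The paper writes the truncated sum as $\frac{1}{(n-1)!}\,\frac{d^{n-1}}{dz^{n-1}}\bigl(\frac{1-z^{N+n}}{1-z}\bigr)$ and applies the Leibniz rule to the product $(1-z^{N+n})\cdot(1-z)^{-1}$, so the correction terms $\binom{N+n}{N+j}\frac{z^{N+j}}{(1-z)^{j}}$ arise from the derivatives falling on $z^{N+n}$. You instead identify the correction sum with the tail $\sum_{j>N}\binom{n+j-1}{j}z^{j}$ by expanding each $\frac{z^{N+j}}{(1-z)^{j}}$ via \eqref{07} and matching coefficients of $z^{N+r}$, which reduces \eqref{trunBergseries} to the convolution $\sum_{i\ge 0}\binom{N+n}{n-1-i}\binom{r-1}{i}=\binom{N+n+r-1}{n-1}$; this is indeed a correct instance of the Vandermonde convolution underlying \eqref{ChuVan} (with parameters $a=N+n$, $b=r-1$, target $n-1$), and your index ranges $1\le j\le\min(n,r)$ line up with where the binomials vanish. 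Both arguments are complete; yours buys a derivation that reuses only \eqref{07} and Chu--Vandermonde and avoids the differentiation bookkeeping (where the paper's displayed intermediate lines in fact contain some typographical slips), at the cost of a coefficient comparison, while the paper's is shorter once one accepts the Leibniz expansion. Your fallback induction on $N$ also checks out: the base case is $\bigl(1+\frac{z}{1-z}\bigr)^{n}=(1-z)^{-n}$, and the telescoping relation $Q_{n,N}-Q_{n,N+1}=\binom{n+N}{N+1}z^{N+1}$ follows from Pascal's rule $\binom{N+n+1}{N+j+1}=\binom{N+n}{N+j}+\binom{N+n}{N+j+1}$ applied termwise.
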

 
 \begin{proof}  The following computation
     \begin{align*}
\sum_{j=0}^N\bcs{n+j-1 \\ j}z^j&=\frac{1}{(n-1)!}\cdot 
\frac{d^{n-1}}{dz^{n-1}}\left(\frac{1-z^{N+n}}{1-z}\right)\\
&=\frac{1-z^{N+n}}{(1-z)^n}-\frac{1}{(n-1)!}\cdot 
\sum_{j=0}^{n-2}\bcs{n-1 \\ j}\frac{j!(N+k)!}{(N+j+1)!}\frac{z^{N+j+1}}{(1-z)^{j+1}}\\
& \quad \text{ (by the Leibnitz rule for the derivative of  a 
product)} \\
&=\frac{1-z^{N+k}}{(1-z)^k}-\sum_{j=0}^{n-2}\bcs{N+n\\ N+j+1}\frac{z^{N+j+1}}{(1-z)^{j+1}}\\
&=\frac{1}{(1-z)^n}-\sum_{j=1}^{n}\bcs{N+n\\ N+j}\frac{z^{N+j}}{(1-z)^{j}}
\text{ (by the shift $j \mapsto j+1$)}  
\end{align*}
verifies the result.  \end{proof}

Proposition \ref{P:trunBergseries} enables us to express the 
shifted $n$-th-power geometric series $R_{n,k}(z)$ in terms of unshifted 
$n$-th-power geometric series $R_{n}(z)$ as follows.

\begin{proposition}  \label{P:RnkRn}
    The shifted $n$-resolvent $R_{n,k}(z)$ is recovered from the 
    unshifted $n$-resolvent $R_{n}(z)$ according to the formula
 \begin{equation}   \label{RnkRn}
 R_{n,k}(z) = R_{n}(z) \cdot \sum_{\kappa = 0}^{n-1} \left( 
  \sum_{j = 0}^{\kappa}(-1)^{j}  \bcs{k+n-1 \\ k} \bcs{n-1-\kappa + j \\ j} 
  \right) z^{\kappa}  \end{equation} 
\end{proposition}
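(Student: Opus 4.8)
The plan is to start from the power-series expansion \eqref{07} for $R_{n,k}(z)$ and reduce the claim to the truncated-series formula of Proposition \ref{P:trunBergseries}. The key observation is that, after the substitution $i=j+k$, the coefficient $\binom{n+j+k-1}{j+k}$ becomes $\binom{n+i-1}{i}$, so that
$$R_{n,k}(z)=\sum_{j\ge 0}\binom{n+j+k-1}{j+k}z^{j}=z^{-k}\sum_{i=k}^{\infty}\binom{n+i-1}{i}z^{i};$$
in other words $z^{k}R_{n,k}(z)$ is exactly the tail of the series \eqref{07} for $R_{n}(z)$ obtained by discarding the first $k$ terms. This is precisely the situation Proposition \ref{P:trunBergseries} is designed to handle.

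First I would write this tail as $R_{n}(z)$ minus its $(k-1)$-st partial sum and invoke Proposition \ref{P:trunBergseries} with $N=k-1$. The leading term $1/(1-z)^{n}$ produced there cancels against $R_{n}(z)$, leaving only the finite correction, so that
$$R_{n,k}(z)=z^{-k}\sum_{j=1}^{n}\binom{k+n-1}{k+j-1}\frac{z^{k+j-1}}{(1-z)^{j}}=\sum_{j=1}^{n}\binom{k+n-1}{k+j-1}\frac{z^{j-1}}{(1-z)^{j}}.$$
This already exhibits $R_{n,k}$ as a finite combination of the resolvents $(1-z)^{-j}$, $1\le j\le n$. Next I would factor out $R_{n}(z)=(1-z)^{-n}$ by writing $(1-z)^{-j}=(1-z)^{n-j}R_{n}(z)$, obtaining
$$R_{n,k}(z)=R_{n}(z)\sum_{j=1}^{n}\binom{k+n-1}{k+j-1}\,z^{j-1}(1-z)^{n-j}.$$
It then remains to expand each $(1-z)^{n-j}$ by the binomial theorem and to collect the coefficient of a fixed power $z^{\kappa}$; since $z^{j-1}(1-z)^{n-j}$ contributes only to powers $z^{\kappa}$ with $j-1\le\kappa\le n-1$, the resulting polynomial has degree $n-1$, matching the outer sum $\sum_{\kappa=0}^{n-1}$ in \eqref{RnkRn}, and its $z^{\kappa}$-coefficient is the claimed inner double sum.

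The main obstacle I anticipate is purely the combinatorial bookkeeping in this last step: re-indexing the double sum so that the inner summation index runs cleanly from $0$ to $\kappa$, keeping track of the sign $(-1)^{j}$ coming from $(1-z)^{n-j}$, and identifying the two binomial factors that appear. Here the summation limits and the exact binomial arguments must be checked with care. As an independent confirmation, an alternative route starting instead from \eqref{07-1} and using $R_{n-\ell+1}(z)=(1-z)^{\ell-1}R_{n}(z)$ produces the companion closed form $R_{n,k}(z)=R_{n}(z)\sum_{m=0}^{n-1}\binom{m+k-1}{m}(1-z)^{m}$; expanding this and comparing constant terms verifies via the hockey-stick identity that the $z^{0}$-coefficient equals $\binom{k+n-1}{k}$, which provides a useful sanity check on the bookkeeping in the displayed formula \eqref{RnkRn}.
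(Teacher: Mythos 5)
Your route is the same as the paper's own proof: write $z^{k}R_{n,k}(z)$ as the tail of the series for $R_{n}(z)$, invoke Proposition \ref{P:trunBergseries} with $N=k-1$ so that the $1/(1-z)^{n}$ terms cancel, factor out $R_{n}(z)=(1-z)^{-n}$, and expand the remaining powers of $(1-z)$. Your intermediate identity
\[
R_{n,k}(z)=R_{n}(z)\sum_{j=1}^{n}\binom{k+n-1}{k+j-1}\,z^{j-1}(1-z)^{n-j}
\]
is correct, and is in fact more careful than the corresponding line of the paper's argument, which drops the $j$-dependence of this binomial coefficient.

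The problem lies precisely in the step you defer as ``combinatorial bookkeeping.'' If you actually collect the coefficient of $z^{\kappa}$ from your display, you obtain
\[
\sum_{j=0}^{\kappa}(-1)^{j}\binom{k+n-1}{k+\kappa-j}\binom{n-1-\kappa+j}{j},
\]
which is \emph{not} the inner sum printed in \eqref{RnkRn}: there the first binomial factor reads $\binom{k+n-1}{k}$, independent of $j$ and $\kappa$. The printed identity is false as it stands; for $n=2$, $k=1$ its right-hand side evaluates to $2R_{2}(z)$, whereas $R_{2,1}(z)=\bigl(R_{2}(z)-1\bigr)/z=(2-z)R_{2}(z)$. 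So the assertion that your expansion ``is the claimed inner double sum'' is exactly where the argument breaks: carrying out the bookkeeping would have revealed that the target formula needs $\binom{k+n-1}{k+\kappa-j}$ in place of $\binom{k+n-1}{k}$ (the paper's own proof propagates the same misprint from the statement). Your sanity check does not catch this because it only tests the constant term $\kappa=0$, where the two coefficients coincide --- both equal $\binom{k+n-1}{k}$, by the hockey-stick identity you cite. With the corrected binomial factor your argument is complete and proves the (corrected) proposition.
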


\begin{proof}  By definitions \eqref{apr1}, 
\begin{align}
R_{n,k}(z)&=z^{-k}\cdot \left(R_n(z)-\sum_{j=0}^{k-1}\bcs{n+j-1\\ j} z^j\right)\notag\\
&=z^{-k}R_n(z)\cdot \left(1-(1-z)^n\cdot\sum_{j=0}^{k-1}\bcs{n+j-1\\ j} z^j\right).\label{sep2}
\end{align}
Letting $N=k-1$ in \eqref{trunBergseries} and combining this formula with \eqref{sep2}
gives
\begin{align*}
R_{n,k}(z)&=R_n(z)\cdot \sum_{j=0}^{n-1}\bcs{k+n-1\\ k} z^j(1-z)^{n-j-1}  \\
&= R_{n}(z) \cdot \sum_{j=0}^{n-1} \sum_{\ell=0}^{n-j-1} \bcs{k+n-1 
\\ k } \bcs{ n-j-1 \\ \ell } (-1)^{\ell} z^{j+\ell}  \\
& = R_{n}(z) \cdot \sum_{\kappa = 0}^{n-1} \left( \sum_{j=0}^{\kappa} 
\bcs{k+n-1 \\ k} \bcs{n-1-j \\ \kappa - j} (-1)^{\kappa -j} \right) 
z^{\kappa} \\
& = R_{n}(z) \cdot \sum_{\kappa = 0}^{n-1} \left( \sum_{j=0}^{\kappa} 
\bcs{k+n-1 \\ k } \bcs{ n-1-\kappa + j \\ j } (-1)^{j} \right) 
z^{\kappa}
\end{align*}
and \eqref{RnkRn} follows.
\end{proof}

	As a consequence of identities \eqref{07}, \eqref{07-1}, 
	\eqref{difquotRnk}, \eqref{RnkRn}, we have the following identities for any 
  Hilbert-space operator $A$ having spectral radius less than one:
  \begin{align}
R_{n}(zA)&=(I - zA)^{-n} = \sum_{j=0}^{\infty} 
      \bcs{n+j-1\\ j} A^{j} z^{j}, \label{operator-series} \\
       R_{n,k}(zA)&  = \sum_{j=0}^{\infty} \bcs{n+j+k-1\\ j+k} A^{j}  z^{j},\label{st41}\\
R_{n,k}(zA)&=\sum_{\ell = 1}^{n} \bcs{\ell+k-2\\ \ell - 1}R_{n-\ell+1}(zA)\quad\mbox{for}\quad k\ge 1,
\label{st42}\\
 R_{n,k}(zA) &= \bcs{n+k-1 \\ k} I_\cX+ zAR_{n,k+1}(zA), 
 \label{st4}   \\
 R_{n,k}(zA) & = R_{n}(zA) \cdot \sum_{\kappa = 0}^{n-1} \left( 
  \sum_{j = 0}^{\kappa} (-1)^{j} \bcs{k+n-1 \\ k} \bcs{n-1-\kappa + j \\ j} 
   \right) z^{\kappa} A^{\kappa}.
 \label{RnkRnzA}
  \end{align}
  
  \section{System theory motivation}   \label{STM}

We have seen in the Introduction (see formulas 
\eqref{sys}--\eqref{transfunc}) how a formula of the type
\begin{equation}   \label{time-inv-transfunc}
  \Theta_{\Sigma_{1}}(z) = D + z C(I - zA)^{-1} B = D + z C R_{1}(zA)  B.
 \end{equation}
 arises as the transfer function of a discrete-time time-invariant 
 input/state/output linear system of the form \eqref{sys}.
Furthermore, imposition of the condition that 
the colligation matrix $\left[ \begin{smallmatrix} A & B \\ C & 
D \end{smallmatrix} \right]$ be contractive  (respectively, unitary) together with an 
additional stability condition on the state-update operator $A$ leads to the transfer 
function $\Theta$  being in the Schur class, i.e., having contractive values on the unit 
disk (respectively, being inner, i.e.,  contractive on the unit disk with 
unitary nontangential boundary values on the unit circle almost 
everywhere).  

\smallskip

In the quest for a parallel system-theoretic interpretation for 
Bergman inner functions on the unit disk, Olofsson \cite{oljfa, olaa} 
has shown that realization formulas of the type $\Theta(z) = D + z 
CR_{n}(zA)B$ involving higher-order resolvents $R_{n}(zA)$ arise naturally  
and he also associated a certain higher-order 
time-invariant linear system having such a $\Theta(z)$ as transfer 
function.  We shall see below that functions of the form $\Theta(z) = D + z C R_{n,k}(zA)B$ 
involving shifted higher-order resolvents $R_{n,k}(zA)$ also arise naturally.  
To obtain a system-theoretic connection for functions involving such shifted 
higher-order  resolvents $R_{n,k}(zA)$, we propose to consider 
the following discrete-time time-varying linear system:
\begin{equation}  \label{m1}
 \Sigma_{n}\left( \left\{\left[ \begin{smallmatrix} A & B_{j} \\ C & D_{j} 
 \end{smallmatrix}\right]  \right\}_{j\in{\mathbb Z}_+} \right) \colon    \left\{ 
\begin{array}{rcl}
 x(j+1) & = & \frac{j+n}{j+1}\cdot Ax(j)+\binom{j+n}{j+1}\cdot B_ju(j), \\ [3mm]   
 y(j) & = & C x(j)+\binom{j+n-1}{j}\cdot D_ju(j)
 \end{array} \right.
 \end{equation}
where 
$$
A \in \cL(\cX), \; \; C \in \cL(\cX, \cY), \; \;
B_k \in \cL(\cU_k, \cX), \; \; D_k \in \cL(\cU_k, \cY)
$$
are given bounded linear operators acting between given Hilbert spaces 
$\cX$, $\cY$ and $\cU_k$ ($k\ge 0$).  We note that the case where $n=1$ and where the 
operators $B_{k} =  B$ and $D_{k} = D$ are taken independent of the time parameter 
 $k\in{\mathbb Z}_+$ reduces to the classical time-invariant case given 
 by \eqref{sys}. 

\smallskip

If we let the system evolve on the
nonnegative  integers $j \in {\mathbb Z}_{+}$, then the whole
trajectory $\{u(j), x(j), y(j)\}_{j \in {\mathbb Z}_{+}}$ is
determined from the input signal $\{u(j)\}_{j \in {\mathbb Z}_{+}}$
and the initial state $x(0)$ according to the formulas
\begin{align} 
x(j) &= \bcs{n+j-1\\ j}\cdot \left(A^{j} x(0)+\sum_{\ell=0}^{j-1}A^{j-\ell-1} B_\ell 
u(\ell)\right), 
\label{m2}\\
y(j) &= \bcs{n+j-1\\ j}\cdot \left(CA^{j} x(0)+\sum_{\ell=0}^{j-1}CA^{j-\ell-1} B_\ell u(\ell)
+D_ju(j)\right).\label{m3}
  \end{align}
Formula \eqref{m2} is established by simple induction arguments, while \eqref{m3} is 
obtained by substituting \eqref{m2} into the second equation in \eqref{m1}.

\smallskip

To write the $Z$-transformed version of the system-trajectory formula \eqref{m2},
we multiply both sides of \eqref{m2} by $z^{j}$ and sum over $j\ge 0$ to get, on account of 
\eqref{operator-series},
\begin{align}
\widehat x(z)& = \sum_{j=0}^{\infty} x(j) z^{j}\notag\\
&=\left(\sum_{j=0}^\infty \bcs{j+n-1\\ j}A^jz^j\right)x(0)+\sum_{k=1}^{\infty}
\left(\sum_{j=k}^\infty\bcs{j+n-1 \\ j} A^{j-k}z^j\right)B_{k-1}u(k-1)\notag\\
&=(I-zA)^{-n}x(0)+\sum_{k=1}^{\infty}z^k\left(\sum_{j=0}^\infty 
\bcs{j+k+n-1\\ j+k}A^jz^j\right)B_{k-1}u(k-1)\notag\\
&=(I-zA)^{-n}x(0)+\sum_{k=1}^{\infty}z^kR_{n,k}(zA)B_{k-1}u(k-1)\notag\\
&=(I-zA)^{-n}x(0)+\sum_{k=0}^{\infty}z^{k+1}R_{n,k+1}(zA)B_ku(k).\label{m4}
\end{align}
The same procedure applied to \eqref{m3} gives
\begin{align}
\widehat y(z)&=\sum_{k=0}^{\infty} y(k) z^{k}\notag \\
&=C(I-zA)^{-n}x(0)+\sum_{k=0}^{\infty}z^k \left(\bcs{k+n-1\\ k}D_k+ 
zCR_{n,k+1}(zA)B_k\right)u(k)\notag\\
&=\cO_{n,C,A}x(0)+\sum_{k=0}^{\infty}z^k \Theta_{n,k}(z)u(k), \label{y-hat}
\end{align}
where 
 \begin{equation}
\cO_{n,C,A} \colon \; x \mapsto
\sum_{j=0}^\infty\left(\bcs{n+j-1 \\ j}CA^jx\right) \, z^j=C(I-z A)^{-n} x
\label{0.9}
\end{equation}
is the $n$-observability operator and where 
 \begin{equation}
\Theta_{n,k}(z)=\bcs{k+n-1\\ k}D_k+
zCR_{n,k+1}(zA)B_k \qquad (k=0,1,\ldots) 
\label{m5}
\end{equation}
is the family of transfer functions.  We note that the transfer 
function $\Theta_{n,k}(z)$ encodes the result of a pulse input-vector 
$u$ being applied at time $j = k$:
$$
\widehat y(z) = \Theta_{n,k}(z) \cdot z^{k} u \quad\text{if}\quad x(0) = 0 
\quad\text{and}\quad u(j) = \delta_{j,k} u.
$$
In fact the functions $\Theta_{n,k}(z)$ could have been derived in 
this way and then one could arrive at input-output relation 
\eqref{y-hat} via superposition of all these time-$k$ impulse 
responses.   Note also that formula \eqref{m5} for the classical time-invariant case ($n=1$ and 
$B_k$, $D_k$  independent of $k$) reduces to the formula \eqref{time-inv-transfunc} for the 
classical transfer function $\Theta_{\Sigma_1}$ due 
to the identity \eqref{n=1}.

\smallskip

For the application to Bergman inner functions, it is natural to 
impose some additional metric constraints on the colligation matrices 
$\left[ \begin{smallmatrix} A & B_{k} \\ C & D_{k} \end{smallmatrix} 
\right]$; these are discussed in Section \ref{S:metric} below.
 
\section{Observability operators and 
gramians, Stein equalities and inequalities}  
\label{S:obs}

Formula \eqref{0.9} associates with any {\em output pair} $(C, A)$ 
(i.e., $C\in\cL(\cX,\cY)$ and $A\in\cL(\cX)$) the $n$-observability operator 
$\cO_{n,C,A}$. In case ${\mathcal O}_{n,C,A}$ is bounded as an operator from $\cX$ into $\cA_n(\cY)$, we 
say that the pair $(C,A)$ is {\em $n$-output-stable}.  
If $(C,A)$ is $n$-output stable, then the {\em $n$-observability 
gramian} 
\begin{equation}   \label{nobsgram}
{\mathcal G}_{n,C, A}:=(\cO_{n,C, A})^{*}\cO_{n,C,A}
\end{equation}
is bounded on $\cX$ and can be represented via the series
\begin{equation}
{\mathcal G}_{n,C, A}= \sum_{k=0}^\infty \bcs{k+n-1\\ k}
 A^{*k}C^*CA^k
\label{2.1}
\end{equation}
converging in the strong operator topology.  As suggested by the 
Agler hereditary functional calculus as formulated by 
Ambrozie-Engli\v{s}-M\"uller \cite{AEM}, we introduce the operator
\begin{equation} \label{defba}
B_{A} \colon X \mapsto A^{*} X A
\end{equation}
 mapping  $\cL(\cX)$ into itself, and then view $\cG_{n,C,A}$ (at 
 least formally) as being given by  
\begin{equation} \label{defGGamma}
\cG_{n,C,A} = (I - B_{A})^{-n}[C^{*}C].
\end{equation}
If $\| B_{A} \| < 1$, \eqref{defGGamma} is precise; in general one 
can make this precise by interpreting \eqref{defGGamma} in the form
\begin{equation}   \label{defGGamma'}
    \cG_{n,C,A} = \lim_{r \uparrow 1} (I - r B_{A})^{-n}[C^{*}C].
\end{equation}
Either of the formulas \eqref{defGGamma} and \eqref{defGGamma'}
suggests that we define
\begin{equation}   \label{G0}
    \cG_{0,C,A} = C^{*}C.
\end{equation}

We next introduce the operator map 
\begin{equation}   \label{2.3}
\Gamma_{n,A}=(I - B_{A})^{n} \colon \quad X\mapsto \sum_{k=0}^{n} (-1)^{k} \binom{n}{k} 
A^{*k} X A^{k}.
\end{equation}
There is then an assortment of identities as listed below.

\begin{lemma}\label{L:identities}
    \begin{enumerate}
\item For all $H,A \in \cL(\cX)$ and any integers $k\ge 1$ and $N\ge 0$,
\begin{align}
&  \Gamma_{k,A}[H] = \Gamma_{k-1,A}[H] - A^{*} \Gamma_{k-1,A}[H]A,\label{2.4}\\
&  H = \sum_{j=0}^N\bcs{k+j-1 \\ j}A^{*j}\Gamma_{k,A}[H]A^j+
\sum_{j=1}^{k}\bcs{N+k\\ N+j}A^{*N+j}\Gamma_{k-j,A}[H]A^{N+j}.        
\label{2.5}  
\end{align}
	\item If $(C,A)$ is $n$-output stable, then also
	\begin{align}
& \cG_{m, C,A} - A^{*} \cG_{m, C,A} A = \cG_{m-1,C,A}\quad\text{for}\quad m=1,\dots, n,
	\label{id0}  \\
&	\Gamma_{k,A}[\cG_{n,C,A}]  = \cG_{n-k,C,A} \quad\text{for}\quad k=0,\dots, n.
	\label{2.6}
  \end{align}
  Moreover in this case $(C,A)$ is also $k$-output stable for $k=1, 
  \dots, n-1$.
  \end{enumerate}
  \end{lemma}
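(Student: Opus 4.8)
The plan is to dispatch the two purely algebraic identities of part (1) first, since they hold for arbitrary $H$, and then specialize to $H=\cG_{n,C,A}$ in part (2). Throughout I would exploit that the map $B_{A}\colon X\mapsto A^{*}XA$ of \eqref{defba} is a single bounded linear operator on $\cL(\cX)$, so that $B_{A}^{j}[X]=A^{*j}XA^{j}$ and any polynomial identity in a scalar $z$ becomes, after the substitution $z\mapsto B_{A}$, a valid identity among the maps $X\mapsto A^{*j}\Gamma_{m,A}[X]A^{j}$. For \eqref{2.4} I would simply factor $\Gamma_{k,A}=(I-B_{A})^{k}=(I-B_{A})(I-B_{A})^{k-1}$; applying this to $H$ and reading off $B_{A}[\Gamma_{k-1,A}[H]]=A^{*}\Gamma_{k-1,A}[H]A$ yields the recursion at once.

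The identity \eqref{2.5} is the operator incarnation of Proposition \ref{P:trunBergseries}. First I would take \eqref{trunBergseries} with $n$ replaced by $k$, clear the denominator by multiplying through by $(1-z)^{k}$, and rearrange into the single polynomial identity
\begin{equation*}
1=(1-z)^{k}\sum_{j=0}^{N}\binom{k+j-1}{j}z^{j}+\sum_{j=1}^{k}\binom{N+k}{N+j}z^{N+j}(1-z)^{k-j}.
\end{equation*}
Both sides are honest polynomials in $z$ of degree $\le N+k$, so I may substitute $z\mapsto B_{A}$ and apply the resulting map to $H$ with no convergence concerns. Using $B_{A}^{j}(I-B_{A})^{k}[H]=A^{*j}\Gamma_{k,A}[H]A^{j}$ and $B_{A}^{N+j}(I-B_{A})^{k-j}[H]=A^{*(N+j)}\Gamma_{k-j,A}[H]A^{N+j}$ turns the display into \eqref{2.5} verbatim.

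For part (2) I would first secure the lower-order output stability, since the objects $\cG_{m-1,C,A}$ and $\cG_{n-k,C,A}$ appearing in \eqref{id0} and \eqref{2.6} must be known to be bounded before those identities can even be asserted. Because $\binom{k+j-1}{j}\le\binom{n+j-1}{j}$ for $1\le k\le n$ and every $j$, the positive partial sums of the series \eqref{2.1} for $\cG_{k,C,A}$ are dominated term-by-term by those for $\cG_{n,C,A}$; as the latter converge strongly to a bounded operator, the increasing bounded sequence of partial sums for $\cG_{k,C,A}$ converges strongly to a bounded operator as well, so $(C,A)$ is $k$-output stable. With all gramians in hand, \eqref{id0} follows by applying $\Gamma_{1,A}=I-B_{A}$ to the series \eqref{2.1} for $\cG_{m,C,A}$: the term $A^{*}\cG_{m,C,A}A$ reindexes (by strong continuity of $X\mapsto A^{*}XA$) to $\sum_{i\ge1}\binom{i+m-2}{i-1}A^{*i}C^{*}CA^{i}$, and Pascal's rule $\binom{i+m-1}{i}-\binom{i+m-2}{i-1}=\binom{i+m-2}{i}$ collapses the difference to the series for $\cG_{m-1,C,A}$, the end case $m=1$ reproducing $C^{*}C=\cG_{0,C,A}$ of \eqref{G0}. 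Finally \eqref{2.6} is obtained by iterating \eqref{id0}, writing $\Gamma_{k,A}[\cG_{n,C,A}]=(I-B_{A})^{k}[\cG_{n,C,A}]$ and peeling off one factor $(I-B_{A})$ at a time, each step legitimate precisely because the intermediate gramians $\cG_{n-1,C,A},\dots,\cG_{n-k,C,A}$ were just shown to be bounded.

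I expect the only genuine care to lie in the bookkeeping for part (2): justifying that the strongly convergent series \eqref{2.1} may be manipulated term-by-term under $X\mapsto A^{*}XA$, and in particular that the boundary term $\binom{J+m-1}{J}A^{*(J+1)}C^{*}CA^{J+1}$ left over after the telescoping tends strongly to $0$. This last point is itself controlled by domination: it is a single positive term bounded above by the tail $\sum_{j\ge J+1}\binom{j+m-1}{j}A^{*j}C^{*}CA^{j}$ of the convergent series for $\cG_{m,C,A}$, hence vanishes strongly. The algebra of part (1) is, by contrast, a clean polynomial substitution, and the stability reduction is routine once the monotonicity of the binomial weights in their upper index is noted.
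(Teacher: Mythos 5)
Your proposal is correct and follows essentially the same route as the paper: part (1) by factoring $(I-B_A)^k$ and by substituting $B_A$ into the polynomial identity obtained from Proposition \ref{P:trunBergseries} cleared of denominators, and part (2) by applying $I-B_A$ term-by-term to the series \eqref{2.1} and invoking Pascal's rule, then iterating to get \eqref{2.6}. The only (harmless) reorganization is that you secure $k$-output stability up front via the monotonicity $\binom{k+j-1}{j}\le\binom{n+j-1}{j}$ and domination of partial sums, whereas the paper extracts it as a corollary of the same telescoping computation; your handling of the boundary term and of strong convergence is sound.
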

  
  \begin{proof} 
Upon applying identity $(I - B_{A})^{k}=(I - B_{A})^{k-1}-B_A(I - B_{A})^{k-1}$ to an 
operator $H\in\cL(\cX)$ and making use of definition \eqref{2.3} we get \eqref{2.4}.
To verify \eqref{2.5}, we use the identity \eqref{trunBergseries} in the form
$$
\sum_{j=0}^N\bcs{k+j-1 \\ j}z^j=
\frac{1}{(1-z)^k}-\sum_{j=1}^{k}\bcs{N+k\\ 
N+j}\frac{z^{N+j}}{(1-z)^{j}}.
$$
Multiplying both parts in the latter identity by $(1-z)^k$ we get
\begin{equation}
1=\sum_{j=0}^N\bcs{k+j-1 \\ j}z^j(1-z)^k+\sum_{j=1}^{k}\bcs{N+k\\ N+j}z^{N+j}(1-z)^{N-j},
\label{weget}
\end{equation}
which in turn implies the operator identity
$$
I_{\cX}=\sum_{j=0}^N\bcs{k+j-1 \\ j}B_A^j(1-B_A)^k+\sum_{j=1}^{k}\bcs{N+k\\ 
N+j}B_A^{N+j}(1-B_A)^{N-j}.
$$
Upon applying this latter identity to an
operator $H\in\cL(\cX)$ and making use of definition \eqref{2.3} we get \eqref{2.5}.

\smallskip

We provide two proofs of \eqref{id0} and \eqref{2.6}, one of which is 
a straightforward easy-to-remember computation but which is rigorous 
only if the spectral radius $\rho(B_{A})$ of $B_{A}$ is less than 1,  together with a second more elaborate 
proof to handle the general case.
We have from \eqref{defba} and \eqref{defGGamma} 
    \begin{align*}
	\cG_{n,C,A} - A^{*} \cG_{m, C,A} A & = (I - 
	B_{A})[\cG_{m,C,A}] \\
	& = (I - B_{A}) \circ (I - B_{A})^{-m}[C^{*}C] \\
	& = (I - B_{A})^{-m+1}[C^{*} C ]= \cG_{m-1, C,A}
	\end{align*}
	and \eqref{id0} follows, at least for the case where $\| 
	B_{A} \| < 1$. To handle the general case (where we only 
	assume that $(C,A)$ is $n$-output stable),  one can plug in 
	the infinite series representation \eqref{2.1} for 
	$\cG_{m,C,A}$ and make use of the binomial coefficient 
	identity $ \binom{m}{k} = \binom{m-1}{k} + \binom{m-1}{k-1}$ to arrive at the result.
	A corollary of the computation is that the infinite series 
	defining $\cG_{m-1,C,A}$ is strongly convergent, i.e., 
	$(C,A)$ is also $(m-1)$-output stable whenever it is $m$-output 
	stable.
	
\smallskip 
      
To prove \eqref{2.6} under the assumption that  $\rho(B_{A}) < 1$, note that
\begin{align*}
\Gamma_{j,A}[\cG_{n,C,A}] & = (I - B_{A})^{j}[ (I - 
B_{A})^{-n}[C^{*}C]] \\
& = (I - B_{A})^{-n+j}[C^{*}C]= \cG_{n-j,C,A}.
\end{align*}
To handle the general case, one can do an inductive argument using identity 
\eqref{id0} to arrive at the result.  As a corollary we arrive at the 
final statement in Lemma \ref{L:identities}:  $(C,A)$ is $k$-output stable for $k=1, \dots, n-1$ 
whenever $(C,A)$ is $n$-output stable.
\end{proof}

\begin{definition}
The operator $A\in\cL(\cX)$ is called {\em $n$-contractive} if
$\Gamma_{n,A}[I]$ is positive semidefinite
and it is called {\em $n$-hypercontractive} if
$\Gamma_{k,A}[I]\ge 0$ for all $0\le k\le n$.
\label{D:0}
\end{definition}
It was shown in \cite{aglerhyper} (see also \cite{muller} as well as 
\cite{MV} for a multivariable version) that inequalities
$\Gamma_{1,A}[I]\ge 0$ and $\Gamma_{n,A}[I]\ge 0$ 
%(that is, $A$ be a contraction and an $n$-contraction) 
imply that $A$ is an $n$-hypercontraction. This result extends from 
$I$ to an arbitrary $H\ge 0$; the proof below is modelled from the one in \cite{muller}.
\begin{lemma}
Let  us assume that the operators $H, \, A\in\cL(\cX)$ are such that  
\begin{equation}
H\ge A^*HA\ge 0\quad\mbox{and}\quad\Gamma_{n,A}[H]\ge 0  
\label{sq1}
\end{equation}
for some integer $n\ge 3$. Then
\begin{equation}
\Gamma_{k,A}[H]\ge 0\quad\mbox{for all}\quad k=1,\ldots,n-1.
\label{sq1a}
\end{equation}
\label{squeeze}
\end{lemma}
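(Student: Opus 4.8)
The plan is to recast the statement entirely in terms of the positive linear map $B_{A}\colon X\mapsto A^{*}XA$ on $\cL(\cX)$ and the single decreasing operator sequence it generates. First I would record that the hypotheses force $H\ge 0$ (from $H\ge A^{*}HA\ge 0$) and that, since $B_{A}$ is a positive map, the sequence $f(j):=B_{A}^{j}[H]=A^{*j}HA^{j}$ consists of positive operators. Applying the positive maps $B_{A}^{j}$ to the hypothesis $\Gamma_{1,A}[H]=H-A^{*}HA\ge 0$ shows that $f(j)-f(j+1)=A^{*j}\Gamma_{1,A}[H]A^{j}\ge 0$ for every $j$, so $f$ is non-increasing; being also bounded below by $0$, it converges strongly to some $f_{\infty}\ge 0$. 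Setting $h(j):=f(j)-f(j+1)\ge 0$ we then have, by telescoping, $\sum_{j=0}^{N}h(j)=H-f(N+1)$, so the series $\sum h(j)$ converges strongly and in particular $h(j)\to 0$ strongly.

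Next I would translate positivity of the two given quantities into finite-difference statements about $h$. Writing $(D\phi)(j):=\phi(j)-\phi(j+1)$, the definition \eqref{2.3} gives $\Gamma_{m,A}=(I-B_{A})^{m}$, whence $(D^{m}f)(j)=B_{A}^{j}[\Gamma_{m,A}[H]]$ and hence $(D^{m}h)(j)=(D^{m+1}f)(j)=A^{*j}\Gamma_{m+1,A}[H]A^{j}$. In particular the hypothesis $\Gamma_{n,A}[H]\ge 0$ says exactly that $(D^{n-1}h)(j)\ge 0$ for all $j$, while what we must prove, $\Gamma_{l,A}[H]\ge 0$, is the assertion that $(D^{l-1}h)(0)\ge 0$ for $0\le l-1\le n-1$.

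The engine of the proof is then a downward induction on $m$, from $m=n-1$ to $m=0$, proving $(D^{m}h)(j)\ge 0$ \emph{for all} $j$. The base case $m=n-1$ is the hypothesis just reformulated. For the inductive step I would use that $(D^{m+1}h)(j)=(D^{m}h)(j)-(D^{m}h)(j+1)\ge 0$ means the self-adjoint sequence $j\mapsto (D^{m}h)(j)$ is non-increasing, while $(D^{m}h)(j)=\sum_{i=0}^{m}(-1)^{i}\binom{m}{i}h(j+i)\to 0$ strongly because $h(j)\to 0$. A non-increasing sequence of self-adjoint operators with strong limit $0$ has all terms $\ge 0$ (testing against a vector, $\langle (D^{m}h)(j)\xi,\xi\rangle\ge\lim_{j'\to\infty}\langle(D^{m}h)(j')\xi,\xi\rangle=0$), which closes the induction. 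Evaluating at $j=0$ yields $(D^{l-1}h)(0)=\Gamma_{l,A}[H]\ge 0$ for $l=1,\dots,n$, which contains the desired range $k=1,\dots,n-1$.

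The one genuinely delicate point, rather than any combinatorics, is the passage through strong operator limits: one must be sure that $f$ converges (monotone bounded self-adjoint sequences do, strongly), that $h(j)\to 0$ strongly, and that the elementary monotonicity lemma \textquotedblleft non-increasing with strong limit $0$ implies positive\textquotedblright{} is applied correctly to each difference sequence $D^{m}h$. Everything else, namely positivity of $B_{A}$ and its iterates, the identification $(D^{m}h)(j)=A^{*j}\Gamma_{m+1,A}[H]A^{j}$ via \eqref{2.3}, and the bookkeeping of binomial signs, is routine and uses only the identities already established in Lemma \ref{L:identities}. I expect the hypothesis $n\ge 3$ to play no essential role in the argument beyond guaranteeing that the conclusion is non-vacuous.
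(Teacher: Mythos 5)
Your proof is correct, and at its core it runs on the same engine as the paper's argument (which is modeled on M\"uller's): the hypothesis $\Gamma_{k+1,A}[H]\ge 0$ makes the operator sequence $j\mapsto A^{*j}\Gamma_{k,A}[H]A^{j}$ non-increasing, and a downward induction from $k=n-1$ extracts nonnegativity of each $\Gamma_{k,A}[H]$. Where you genuinely diverge is in how nonnegativity is forced out of monotonicity. The paper telescopes $\sum_{j=0}^{N}A^{*j}\Gamma_{n-1,A}[H]A^{j}=\Gamma_{n-2,A}[H]-A^{*N+1}\Gamma_{n-2,A}[H]A^{N+1}$, bounds $|\langle \Gamma_{n-2,A}[H]A^{k}x,A^{k}x\rangle|$ crudely by $2^{n-2}\langle Hx,x\rangle$ using $A^{*j}HA^{j}\le H$, and invokes the fact that a non-increasing real sequence with uniformly bounded partial sums has all terms nonnegative. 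You instead show that the terms themselves tend to zero strongly: since $h(j)=A^{*j}\Gamma_{1,A}[H]A^{j}\ge 0$ sums to $H-\lim_N A^{*N+1}HA^{N+1}$, one gets $h(j)\to 0$ strongly, hence every finite difference $(D^{m}h)(j)=A^{*j}\Gamma_{m+1,A}[H]A^{j}$ tends to zero, and a non-increasing self-adjoint sequence with strong limit $0$ is termwise nonnegative. Your version buys a cleaner closing step (no $2^{n-2}$ estimate, no partial-sum bookkeeping) and a uniform treatment of all levels $m$ at once; the paper's version avoids having to justify $h(j)\to 0$, needing only boundedness of partial sums. One small point of hygiene: $\Gamma_{n,A}[H]\ge 0$ gives $(D^{n-1}h)(0)\ge 0$ directly, and you need positivity of the maps $B_{A}^{j}$ to promote this to all $j$ for the base case — you do acknowledge this, but it deserves to be stated where the base case is set up rather than in the closing remarks. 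Your observation that $n\ge 3$ is inessential is also accurate.
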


\begin{proof}
Observe that the leftmost inequalities in \eqref{sq1} mean that $H$ and $\Gamma_{1,A}[H]$ are both 
positive semidefinite. Making use of definitions \eqref{defba} and  \eqref{2.3} we have
\begin{align}
\sum_{j=0}^NA^{*j}\Gamma_{n-1,A}[H]A^j&=\sum_{j=0}^NB_A^j(I-B_A)^{n-1}[H]\notag\\
%& = \sum_{j=0}^{N} (I - B_{A}) B_{A}^{j} (I - B_{A})^{n-2}[H] \\
&=(I-B_A^{N+1})(I-B_A)^{n-2}[H]\notag\\
&=\Gamma_{n-2,A}[H]-A^{*N+1}\Gamma_{n-2,A}[H]A^{N+1}.\label{sq2}
\end{align}
Iterating the first inequality in \eqref{sq1} gives $A^{*j}HA^j\le H$ for all $j\ge 0$
and therefore,
\begin{equation}
\left|\left\langle \Gamma_{n-2,A}[H]A^{k}x, \, A^{k}x\right\rangle\right|
\le \sum_{j=0}^{n-2}  \bcs{ n-2 \\ j} 
\langle Hx, \, x\rangle=2^{n-2}\langle Hx, \, x\rangle.
\label{sq3}
\end{equation}
Taking the inner product of both parts in \eqref{sq2} against $x\in\cX$ and then making use of
\eqref{sq3} gives
\begin{align}   
\left|\sum_{j=0}^N \left\langle \Gamma_{n-1,A}[H]A^jx, \, A^jx\right\rangle\right|&=
\left| \left\langle \Gamma_{n-2,A}[H]x, \, x\right\rangle-
 \left\langle \Gamma_{n-2,A}[H]A^{N+1}x, \, A^{N+1}x\right\rangle\right|\notag\\
&\le 2^{n-1}\left\langle Hx, \, x\right\rangle.
\label{sq2a}
\end{align}
On account of relation \eqref{2.4}, the second inequality in \eqref{sq1} implies
$$
\Gamma_{n-1,A}[H] \ge A^{*} \Gamma_{n-1,A}[H]A.
$$
Therefore $\left\langle \Gamma_{n-1,A}[H]A^jx, \, A^jx\right\rangle\ge
\left\langle \Gamma_{n-1,A}[H]A^{j+1}x, \, A^{j+1}x\right\rangle$ for all $j\ge 0$ and
all $x\in\cX$. Thus, on the left hand side of \eqref{sq2a} we have the partial sum
of a non-increasing sequence and, since the partial sums are uniformly bounded   
(by $2^{n-1}\left\langle Hx, \, x\right\rangle$), it follows that all the terms in the sequence
are nonnegative. In particular, $\left\langle \Gamma_{n-1,A}[H]x, \, x\right\rangle\ge
0$. Since the latter inequality holds for every $x\in\cX$, we conclude that
$\Gamma_{n-1,A}[H]\ge 0$. We then obtain recursively all the desired inequalities in  
\eqref{sq1a}.  
\end{proof}
\begin{definition}
The operator $A\in\cL(\cX)$ is called {\em strongly stable} if 
$A^k$ tends to zero as $k\to\infty$ in the strong operator topology, i.e., 
$\|A^{k}x\|\to 0$ for every $x\in\cX$. 
\label{D:1}
\end{definition}

The following result gives connections between $n$-output stability,
observability gramians and solutions of associated Stein equations
and inequalities. In case $n=1$ it is well-known. In what follows, we will
refer to the last relations in \eqref{2.8} and \eqref{2.9} as  
the {\em Stein inequality} and the {\em Stein equality}, respectively.

\begin{theorem}
\label{T:1.1}
Let $C\in\cL(\cX,\cY)$ and  $A \in\cL(\cX)$. Then:
\begin{enumerate}
\item The pair $(C,A)$ is $n$-output-stable if and only if there exists
an $H\in \cL(\cX)$ satisfying the system of inequalities
\begin{equation}
H\ge A^*HA\ge 0\quad\mbox{and}\quad\Gamma_{n,A}[H]\ge C^*C.
\label{2.8}
\end{equation}
\item If $(C,A)$ is $n$-output-stable, then the observability gramian
${\mathcal G}_{n, C, A}$ satisfies the system
\begin{equation}
H\ge A^*HA\ge 0\quad\mbox{and}\quad\Gamma_{n,A}[H]= C^*C
\label{2.9}
\end{equation}
and is the minimal positive semidefinite solution of the system \eqref{2.8}.

\item There is a unique positive-semidefinite solution $H$ of the system \eqref{2.9}
with $H = \cG_{n,C,A}$ if $A$ is {\em strongly stable}.  If $A$ is a contraction, then the 
solution of the system \eqref{2.9} is unique if and only if $A$ is strongly stable.
           \end{enumerate}
           \end{theorem}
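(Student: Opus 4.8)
The plan is to establish the two assertions of part (3) separately, in each case reducing matters to the order-one Stein equation, for which the statement is classical. We may assume throughout that \eqref{2.9} has at least one positive-semidefinite solution (otherwise there is nothing to prove); then by part (1) the pair $(C,A)$ is $n$-output stable, so by part (2) the gramian $\cG_{n,C,A}$ is itself a solution of \eqref{2.9} and $\cG_{k,C,A}$ is bounded for $0\le k\le n$ by Lemma \ref{L:identities}. To prove the first assertion, let $H$ be any positive-semidefinite solution of \eqref{2.9} and set $H^{(m)} := \Gamma_{m,A}[H]$ for $m=0,1,\dots,n$, so that $H^{(0)}=H$ while $H^{(n)}=\Gamma_{n,A}[H]=C^*C=\cG_{0,C,A}$ by \eqref{G0}. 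The recursion \eqref{2.4} links these operators by the chain of order-one Stein equations $H^{(m)}-A^*H^{(m)}A=H^{(m+1)}$ for $0\le m\le n-1$. The elementary fact I would use is that, when $A$ is strongly stable, an equation $X-A^*XA=Q$ has at most one bounded solution: the difference $\Delta$ of two bounded solutions satisfies $\Delta=A^{*k}\Delta A^k$ for every $k$, whence $\langle \Delta x,x\rangle=\langle \Delta A^kx,A^kx\rangle\to 0$ and $\Delta=0$. I would then run a downward induction on $m$: by the identity \eqref{id0} the gramian $\cG_{n-m,C,A}$ solves the same order-one equation $X-A^*XA=\cG_{n-m-1,C,A}=H^{(m+1)}$ as does $H^{(m)}$, so the uniqueness just noted forces $H^{(m)}=\cG_{n-m,C,A}$; taking $m=0$ yields $H=\cG_{n,C,A}$. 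Note that positivity of the intermediate operators $H^{(m)}$, hence Lemma \ref{squeeze}, is not needed here.

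For the second assertion the implication from strong stability to uniqueness is exactly the first assertion, so it remains to prove the contrapositive of the reverse direction: if $A$ is a contraction that is \emph{not} strongly stable, I would exhibit a second positive-semidefinite solution of \eqref{2.9}. Since $A^*A\le I$, the sequence $A^{*k}A^k$ is non-increasing and bounded below by $0$, hence converges in the strong operator topology to some $L$ with $0\le L\le I$; passing to the limit in $A^{*(k+1)}A^{k+1}=A^*(A^{*k}A^k)A$ gives $A^*LA=L$, so that $\Gamma_{1,A}[L]=0$ and therefore $\Gamma_{n,A}[L]=0$. Failure of strong stability means that $\|A^kx\|^2=\langle A^{*k}A^kx,x\rangle$ does not tend to $0$ for some $x$, i.e. $L\neq 0$. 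Then $\cG_{n,C,A}+L$ is a positive-semidefinite solution of \eqref{2.9} distinct from $\cG_{n,C,A}$: indeed $\Gamma_{n,A}[\cG_{n,C,A}+L]=C^*C+0$, and $(\cG_{n,C,A}+L)-A^*(\cG_{n,C,A}+L)A=\cG_{n-1,C,A}\ge 0$ by \eqref{id0}, while clearly $\cG_{n,C,A}+L\ge 0$. Thus \eqref{2.9} fails to have a unique solution.

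The step I expect to be the main obstacle is the uniqueness direction, and specifically the temptation to argue it in one shot from the truncated expansion \eqref{2.5}. Substituting $\Gamma_{n,A}[H]=C^*C$ there gives $H=\sum_{j=0}^N\binom{n+j-1}{j}A^{*j}C^*CA^j+R_N$ with remainder $R_N=\sum_{j=1}^{n}\binom{N+n}{N+j}A^{*N+j}\Gamma_{n-j,A}[H]A^{N+j}$; the leading sum converges to $\cG_{n,C,A}$, but the weights $\binom{N+n}{N+j}\sim N^{n-j}/(n-j)!$ grow polynomially in $N$, so the bare decay $A^Nx\to 0$ supplied by strong stability does not obviously force $R_N\to 0$ once $n\ge 3$. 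Routing the argument through the chain of order-one Stein equations, as above, is precisely what bypasses this remainder estimate.
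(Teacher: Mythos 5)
Your proposal is correct and follows essentially the same route as the paper: the uniqueness direction is run through the same chain of order-one Stein equations $\Gamma_{m,A}[H]-A^*\Gamma_{m,A}[H]A=\Gamma_{m+1,A}[H]$, pinning down $\Gamma_{n-j,A}[H]=\cG_{j,C,A}$ step by step, and the converse uses the same strong limit $L=\lim_k A^{*k}A^k$ to build the second solution $\cG_{n,C,A}+L$. Your observation that uniqueness of bounded solutions of the order-one equation does not require positivity (so Lemma \ref{squeeze} can be bypassed) is a small but valid streamlining of the paper's argument.
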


\begin{proof} Suppose first that $(C,A)$ is $n$-output-stable.  
Then the infinite series in \eqref{2.1} 
converges in the strong operator topology to the operator $H= 
{\mathcal G}_{n,C,A}\ge 0$ and by \eqref{id0}, \eqref{2.6} and \eqref{G0},
$$
{\mathcal G}_{n,C,A}-A^*{\mathcal G}_{n,C,A}A=
{\mathcal G}_{n-1,C,A}\ge 0, \quad \Gamma_{n,A}[{\mathcal G}_{n,C,A}]={\mathcal G}_{0,C,A}=C^*C.
$$
Thus, $H={\mathcal G}_{n,C,A}$ satisfies relations \eqref{2.9} 
and hence also inequalities \eqref{2.8}.

\smallskip

Conversely, suppose that inequalities \eqref{2.8}
are satisfied for some $H\in\cL(\cX)$. Then for every integer $N\ge 0$ we have
\begin{align}
\sum_{j=0}^N \bcs{j+n-1\\ j}A^{*j}C^*CA^j&\le \sum_{j=0}^N \bcs{j+n-1\\ j}A^{*j}\Gamma_{n,A}[H]A^j
\notag\\
&=H-\sum_{j=1}^{n}\bcs{N+n\\ N+j}A^{*N+j}\Gamma_{n-j,A}[H]A^{N+j}\le H.\label{au1}
\end{align}
Indeed, the first inequality follows since $C^*C\le \Gamma_{n,A}[H]$, the second equality 
holds by \eqref{2.5} (with $k=n$) and the last inequality holds since $\Gamma_{k,A}[H]\ge
0$ for $k=0,\ldots,n-1$, by the assumptions \eqref{2.8} and Lemma \ref{squeeze}.

\smallskip

By letting $N\to\infty$ in \eqref{au1} we conclude that the left-hand side sum
converges (weakly and therefore, since all the terms are
positive semidefinite, strongly) to a bounded positive semidefinite
operator. By \eqref{2.1},
$$
\lim_{N\to\infty}\sum_{j=0}^N \bcs{j+n-1\\ j}A^{*j}C^*CA^j
=\sum_{j=0}^\infty \bcs{j+n-1\\ j}A^{*j}C^*CA^j={\mathcal G}_{n,C,A}
$$
and passing to the limit in \eqref{au1} as $N\to \infty$ gives
${\mathcal G}_{n,C, A}\le H$.  In particular  the
operator ${\mathcal G}_{n,C,A}$ is bounded (since $H$ is) and
therefore the pair $(C,A)$ is $n$-output-stable. This completes the proof of the
first part  of the theorem.

\smallskip

As observed above, the gramian ${\mathcal G}_{n,C, A}$ satisfies relations
\eqref{2.9} (and therefore, relations \eqref{2.8}). We also showed that any operator 
$H$ solving the inequalities \eqref{2.8} also satisfies the inequality $H\ge {\mathcal G}_{n,C,A}$
so that ${\mathcal G}_{n,C, A}$ is indeed the minimal solution to the system of inequalities 
\eqref{2.8}. This completes the proof of the second part of the theorem.

\smallskip

Now suppose that $A$ is strongly stable and that $H\in\cL(\cX)$ solves the system \eqref{2.9}.
We will show that then necessarily $H={\mathcal G}_{n,C,A}$. We first recall that if 
positive semidefinite operators $P, \, Q\in\cL(\cX)$ satisfy the Stein equation 
\begin{equation}
P-A^*PA=Q
\label{2.15}
\end{equation}
with a strongly stable $A\in\cL(\cX)$, then $P$ is uniquely recovered from \eqref{2.15}
via the strongly converging series 
\begin{equation}
P={\displaystyle\sum_{k=0}^\infty A^{*k}QA^k}.
\label{2.15a}
\end{equation}
Indeed, iterating \eqref{2.15} gives 
$$
P=A^{*N}PA^N+\sum_{j=0}^{N-1}A^{*j}QA^j\quad\mbox{for all}\quad N\ge 0,
$$
and since all the terms in the latter equality are positive semidefinite, the convergence
of the series on the right side of \eqref{2.15a} follows. The strong stability of $A$ 
guarantees that $A^{*N}PA^N\to 0$ as $N\to \infty$, which implies equality in 
\eqref{2.15a}.
Now we observe that by \eqref{2.4}, the Stein equation in \eqref{2.9} can be written as
$$
\Gamma_{n-1,A}[H]-A^*\Gamma_{n-1,A}[H]A=C^*C
$$
and the above uniqueness shows that
$$
\Gamma_{n-1,A}[H]=\sum_{k=0}^\infty A^{*k}C^*CA^k={\mathcal
G}_{1,C,A}.
$$
The latter equality can be in turn written in the form \eqref{2.15}
with $P=\Gamma_{n-2,A}[H]$
and $Q={\mathcal G}_{1,C,A}$, and by the same uniqueness argument we have
$$
\Gamma_{n-2,A}[H]=\sum_{k=0}^\infty A^{*k}{\mathcal
G}_{1,C,A} A^k={\mathcal G}_{2,C,A}.
$$
Continuing this procedure, we get $\Gamma_{n-j,A}[H]={\mathcal G}_{j,C,A}$ for $j=1,\ldots,n$.
For $j=n$ we have in particular, $H=\Gamma_{0,A}[H]={\mathcal G}_{n,C,A}$,
which gives the desired uniqueness.

\smallskip

To complete the proof of part (3) of the theorem it 
remains to  show that if $A$ is a contraction and the system \eqref{2.9} admits a unique 
solution (which necessarily is $H={\mathcal G}_{C,A,n}$), then the operator $A$ is strongly 
stable. We prove the contrapositive:  {\em if $A$ is not strongly stable, then the solution 
of \eqref{2.9} is not unique.}
         
\smallskip

Since $A$ is a contraction, the sequence of operators $\Delta_k=A^{*k}A^k$
is decreasing and bounded below and therefore has a strong limit
$\Delta\ge 0$ which clearly satisfies the relation $B_A[\Delta]=\Delta$ and which
is not zero, since $A$ is assumed not to be strongly stable. Then it follows that
$$
\Gamma_{k,A}[\Delta]=(I-B_A)^k[\Delta]=\sum_{j=0}^k(-1)^j\bcs{k \\ j}\Delta=0
$$
for all $k\in\{1,\ldots,n\}$.
Now we see that the operator $H={\mathcal G}_{n,C,A}+\Delta$
(as well as ${\mathcal G}_{n,C,A}$) satisfies the system \eqref{2.9}
which therefore has more than one positive-semidefinite solution.
\end{proof}

\begin{definition}
A pair $(C,A)$ with $C\in\cL(\cX,\cY)$ and $A\in\cL(\cX)$ is called 
$n$-{\em contractive} if inequalities \eqref{2.8} hold with $H = I_{{\mathcal X}}$,
that is, if $A$ is $n$-hypercontractive and in addition 
$$
\Gamma_{n,A}[I_{_\cX}]\ge C^*C, 
\quad \text{or equivalently} \quad  
C^*C+\sum_{j=1}^n \bcs{n \\ j}A^{*j}A^j\le I_{_\cX}.
$$
The pair $(C,A)$ will be called $n$-{\em isometric} if relations \eqref{2.9} hold 
with $H = I_{{\mathcal X}}$,
that is, if $A$ is $n$-hypercontractive and in addition
$$
\Gamma_{n,A}[I_{\cX}]=C^*C, \quad \text{or equivalently} \quad
C^*C+\sum_{j=1}^n \bcs{n \\ j}A^{*j}A^j= I_{_\cX}.
$$
\label{D:2}
\end{definition}
A pair $(C,A)$ is called {\em observable} if the operator $\cO_{1,C,A}$
(equivalently, ${\mathcal G}_{1,C, A}$) is
injective. This property means that a state space vector $x\in\cX$ is uniquely recovered
from the output string $\{y_k\}_{k\ge 0}$ generated by running the
system \eqref{m1} with  the initial condition
$x_0=x$ and the zero input string. A pair $(C,A)$ is called {\em exactly observable} if
$\cO_{1,C,A}$ (equivalently, ${\mathcal G}_{1, C, A}$) is bounded and bounded
from below. More generally, we say that the pair $(C,A)$ is {\em 
exactly $n$-observable} if the $n$-gramian $\cG_{n,C,A}$ is bounded 
and bounded below. While $n$-observability implies $k$-observability 
for $1 \le k < n$, the corresponding statement for exact 
$k$-observability fails (see Proposition \ref{P:exactobs} below).  

\smallskip

The following statements follow along the lines of Theorem \ref{T:1.1}.

\begin{proposition}  \label{P:2-1.1}
$(1) \;$ Suppose that $(C, A)$ is an $n$-contractive pair.  Then $(C, A)$ is
$n$-output-stable with ${\mathcal G}_{n,C, A} \le I_{_{\mathcal X}}$ 
and the observability gramian ${\mathcal G}_{n,C, A}$ is the unique 
solution of system \eqref{2.9} if and only if $A$ is strongly stable.

\smallskip

$(2) \;$ Suppose that $(C, A)$ is an $n$-isometric pair.  Then $H = I_{_\cX}$
is the unique solution of the system \eqref{2.9} if and only if $A$ is strongly stable.  In 
this case ${\mathcal O}_{n,C, A}$ is isometric and hence also $(C,A)$ is exactly observable.
\end{proposition}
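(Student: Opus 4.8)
The plan is to derive both parts of Proposition~\ref{P:2-1.1} as specializations of Theorem~\ref{T:1.1} to the test operator $H = I_{\cX}$, exploiting the fact that, by Definition~\ref{D:2}, $n$-contractivity of $(C,A)$ says precisely that $H = I_{\cX}$ solves the inequality system \eqref{2.8}, while $n$-isometry says that $H = I_{\cX}$ solves the equality system \eqref{2.9}. The one structural fact I would extract first is that in either case $A$ is a contraction: the left inequality $H \ge A^{*}HA$ in \eqref{2.8} with $H = I_{\cX}$ reads $I_{\cX} \ge A^{*}A$. This is what permits the second sentence of Theorem~\ref{T:1.1}(3) — the equivalence ``unique solution $\Leftrightarrow$ strongly stable'', valid only for contractions — to be invoked in both parts.

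For part (1), since $H = I_{\cX}$ is a positive semidefinite solution of \eqref{2.8}, Theorem~\ref{T:1.1}(1) immediately gives $n$-output-stability of $(C,A)$. Theorem~\ref{T:1.1}(2) identifies $\cG_{n,C,A}$ as the \emph{minimal} positive semidefinite solution of \eqref{2.8}, and comparing with the solution $I_{\cX}$ yields $\cG_{n,C,A} \le I_{\cX}$. For the remaining equivalence, I would feed the contraction property of $A$ into Theorem~\ref{T:1.1}(3): its second sentence gives that \eqref{2.9} has a unique positive semidefinite solution if and only if $A$ is strongly stable, and its first sentence identifies that unique solution (when $A$ is strongly stable) as $\cG_{n,C,A}$. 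Stitching these together produces exactly the asserted statement that $\cG_{n,C,A}$ is the unique solution of \eqref{2.9} precisely when $A$ is strongly stable.

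For part (2), I would first note that $n$-isometry implies $n$-contractivity (the equality $\Gamma_{n,A}[I_{\cX}] = C^{*}C$ forces the inequality $\ge$), so part (1) already supplies $n$-output-stability, $\cG_{n,C,A} \le I_{\cX}$, and the contraction property of $A$; in addition $I_{\cX}$ now solves the \emph{equality} system \eqref{2.9}. The equivalence then splits cleanly. If $A$ is strongly stable, Theorem~\ref{T:1.1}(3) says \eqref{2.9} has a unique positive semidefinite solution equal to $\cG_{n,C,A}$; since $I_{\cX}$ is also a solution we conclude $\cG_{n,C,A} = I_{\cX}$ and that $H = I_{\cX}$ is that unique solution. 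Conversely, if $H = I_{\cX}$ is the unique solution, then in particular \eqref{2.9} has a unique positive semidefinite solution, whence the second sentence of Theorem~\ref{T:1.1}(3) (applicable because $A$ is a contraction) forces $A$ to be strongly stable. Finally, in the strongly stable case the identity $\cG_{n,C,A} = I_{\cX}$ together with the defining relation \eqref{nobsgram}, $\cG_{n,C,A} = (\cO_{n,C,A})^{*}\cO_{n,C,A}$, says exactly that $\cO_{n,C,A}$ is isometric; an isometry is bounded below, so $\cG_{n,C,A} = I_{\cX}$ is bounded and bounded below, which is the asserted (exact $n$-)observability of $(C,A)$.

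I expect no serious obstacle, since all the analytic work — strong convergence of the gramian series \eqref{2.1}, the Stein-equation uniqueness argument under strong stability, and the squeeze Lemma~\ref{squeeze} — has already been discharged inside Theorem~\ref{T:1.1}, so the proposition is essentially a corollary obtained by testing against $H = I_{\cX}$. The only points demanding care are bookkeeping: verifying that $A$ is a contraction \emph{before} invoking the contraction-specific half of Theorem~\ref{T:1.1}(3), and correctly pairing the two directions of each ``if and only if'' with the two sentences of Theorem~\ref{T:1.1}(3). A minor subtlety worth flagging is that ``$\cO_{n,C,A}$ isometric'' yields exact $n$-observability ($\cG_{n,C,A}$ bounded below) but not, in general, exact $1$-observability, consistent with the failure of exact $k$-observability to descend that is recorded in Proposition~\ref{P:exactobs}.
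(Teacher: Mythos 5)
Your proof is correct and matches the paper's intent exactly: the paper gives no separate argument for Proposition \ref{P:2-1.1}, saying only that it ``follows along the lines of Theorem \ref{T:1.1}'', and your specialization to $H=I_{\cX}$ --- including the observation that the inequality $I_{\cX}\ge A^{*}A$ is what licenses the contraction-only half of part (3) of that theorem, and the pairing of the two sentences of that part with the two directions of each equivalence --- is precisely the intended route. Your closing caveat that isometry of $\cO_{n,C,A}$ yields exact $n$-observability rather than exact $1$-observability is a careful reading consistent with Proposition \ref{P:exactobs}.
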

Let us say that  {\em the pair $(C, A)$ is similar to the pair 
$(\widetilde C, \widetilde A)$} if there is an invertible operator $T$ on $\cX$ so that
$\widetilde C = C T^{-1}$ and $\widetilde A = T AT^{-1}$.
Then we have the following characterization of pairs
$(C,A)$ which are similar to an $n$-contractive or to an $n$-isometric pair.

\begin{proposition}  \label{P:similar}
$(1) \;$ The pair $(C,A)$ is similar to an $n$-contractive pair $(\widetilde C, \widetilde A)$ 
if and only if there exists a bounded, strictly positive-definite solution $H$ to the
 system of inequalities \eqref{2.8}.

\smallskip

$(2) \;$ The pair $(C, A)$ is similar to an $n$-isometric pair if and only if there exists a 
bounded, strictly positive-definite solution $H$ of the system  \eqref{2.9}.
\end{proposition}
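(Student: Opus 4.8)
The plan is to exploit the \emph{congruence covariance} of the entire system of relations \eqref{2.8} (and of \eqref{2.9}) under a similarity, together with the elementary fact that a bounded, strictly positive-definite operator admits an invertible positive square root. Concretely, suppose $\widetilde A = TAT^{-1}$ and $\widetilde C = CT^{-1}$ for some invertible $T \in \cL(\cX)$. First I would record, for an arbitrary $H \in \cL(\cX)$ and its congruent partner $\widetilde H := T^{-*} H T^{-1}$, the three identities
\begin{align*}
\widetilde H - \widetilde A^* \widetilde H \widetilde A &= T^{-*}(H - A^* H A)T^{-1}, \\
\Gamma_{n,\widetilde A}[\widetilde H] &= T^{-*}\Gamma_{n,A}[H]T^{-1}, \\
\widetilde C^* \widetilde C &= T^{-*} C^* C T^{-1}.
\end{align*}
The first two follow by substituting $\widetilde A^{k} = T A^k T^{-1}$ and $\widetilde A^{*k} = T^{-*} A^{*k} T^*$ into the defining expression \eqref{2.3} for $\Gamma_{n,A}$ and cancelling the inner $T^*T^{-*}$ and $T^{-1}T$ factors; the third is immediate.

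Because the congruence $X \mapsto T^{-*} X T^{-1}$ by an invertible operator preserves both positivity and the order relation $\ge$, these identities show at once that the triple $(H, A, C)$ satisfies the inequalities \eqref{2.8} if and only if $(\widetilde H, \widetilde A, \widetilde C)$ does, and likewise that $(H,A,C)$ satisfies the Stein equality system \eqref{2.9} if and only if $(\widetilde H, \widetilde A, \widetilde C)$ does. In other words, solutions of \eqref{2.8} (resp. \eqref{2.9}) for the pair $(C,A)$ correspond bijectively, via $H \mapsto \widetilde H = T^{-*} H T^{-1}$, to solutions for the similar pair $(\widetilde C, \widetilde A)$.

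Part (1) then follows by specializing this correspondence to $\widetilde H = I_{\cX}$. For the forward direction, if $(C,A)$ is similar to an $n$-contractive pair $(\widetilde C, \widetilde A)$ — so that, by Definition \ref{D:2}, $\widetilde H = I_\cX$ solves \eqref{2.8} for $(\widetilde C, \widetilde A)$ — then the congruent operator $H = T^* T$ solves \eqref{2.8} for $(C,A)$; moreover $H = T^* T$ is bounded and, since $T$ is invertible, bounded below by $\|T^{-1}\|^{-2} I_\cX$, hence strictly positive-definite. For the converse, given a bounded strictly positive-definite solution $H$ of \eqref{2.8}, I would factor $H = T^* T$ with $T = H^{1/2}$ invertible (its inverse is bounded precisely because $H$ is bounded below); the correspondence then sends $H$ to $\widetilde H = T^{-*} H T^{-1} = I_\cX$, so $(\widetilde C, \widetilde A) = (CT^{-1}, TAT^{-1})$ satisfies \eqref{2.8} with $\widetilde H = I_\cX$ and is therefore $n$-contractive. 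Part (2) is proved verbatim, replacing the inequality $\Gamma_{n,A}[H] \ge C^* C$ by the equality $\Gamma_{n,A}[H] = C^* C$ and invoking the Stein equality system \eqref{2.9} and the notion of $n$-isometric pair from Definition \ref{D:2}.

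There is no serious obstacle here; the whole content is carried by the covariance identities of the first paragraph. The only points that demand a little care are (i) verifying that $\Gamma_{n,A}$ transforms by pure congruence, which is where the specific polynomial-in-$B_A$ structure of \eqref{2.3} is used, and (ii) confirming that ``bounded, strictly positive-definite'' is exactly the hypothesis needed to produce an invertible square root $T$, i.e.\ that $H \ge \epsilon I_\cX$ for some $\epsilon > 0$, so that the implementing similarity $T$ and its inverse are both bounded.
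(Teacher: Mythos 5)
Your proposal is correct and follows essentially the same route as the paper: factor the strictly positive-definite solution as $H = T^*T$ with $T$ invertible, pass to the similar pair $(\widetilde C,\widetilde A)=(CT^{-1},TAT^{-1})$, and use the congruence covariance of the relations \eqref{2.8} and \eqref{2.9} to identify the solution $H$ for $(C,A)$ with the solution $I_{\cX}$ for $(\widetilde C,\widetilde A)$. The only cosmetic difference is that you spell out the covariance identities for $\Gamma_{n,A}$ explicitly and choose $T=H^{1/2}$, whereas the paper simply multiplies the inequalities on both sides by $T^{-*}$ and $T^{-1}$.
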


\begin{proof}
Suppose that $H$ is a strictly positive-definite solution of \eqref{2.8}.   Factor 
$H$ as $H = T^{*}T$ with $T$ invertible and set
\begin{equation}  \label{tilde-pair}
\widetilde C = C T^{-1}, \qquad \widetilde A = T AT^{-1}.
\end{equation}
Multiplying each inequality in \eqref{2.8} on the left by $T^{*-1}$ and on the right
by $T^{-1}$ and replacing $A$ and $C$ respectively by $T^{-1}\widetilde{A}T$ and 
$\widetilde{C}T$ then leads us to 
$$
I_{\cX}\ge \widetilde{A}^*\widetilde{A} \quad \mbox{and} \quad
\Gamma_{n,\widetilde{A}}[I_{\cX}]\ge \widetilde{C}^*\widetilde{C} 
$$
which means that $(\widetilde C, \widetilde A)$ is an  $n$-contractive
pair which is similar to the original pair $(C, A)$.  Conversely, if
$(\widetilde C, \widetilde A)$ given by \eqref{tilde-pair} is
contractive, then $H = T^{*}T$ is bounded and strictly positive-definite and
satisfies the inequalities \eqref{2.8}. This verifies the first statement of
the Proposition.  The second statement follows in a similar way.
         \end{proof}
	 
	 As a consequence of the observations in Proposition \ref{P:similar},
Proposition \ref{P:2-1.1} can be formulated more generally as follows.

\begin{proposition}  \label{P:2-1.1'}
$(1) \;$ If the pair $(C, A)$ is such that inequalities \eqref{2.8} have a strictly 
positive-definite solution $H$, then $(C,A)$ is $n$-output-stable. Moreover, the 
observability gramian ${\mathcal G}_{n,C,A}$ is the unique positive
semidefinite solution of the system \eqref{2.9} if and
only if $A$ is strongly stable.

\smallskip

$(2) \;$  If the pair $(C,A)$ is such that the system \eqref{2.9} has a strictly 
positive-definite solution $H$, then $(C,A)$ is $n$-output-stable and the observability 
gramian ${\mathcal G}_{n,C,A}$ is the unique positive semidefinite solution of the 
system \eqref{2.9} if and only if $A$ is strongly stable.  In this
case $(C, A)$ is moreover exactly observable.
\end{proposition}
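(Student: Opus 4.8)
The plan is to reduce each assertion to its already-established counterpart in Proposition \ref{P:2-1.1} by exploiting the similarity produced by Proposition \ref{P:similar}. For part (1) I would begin with a strictly positive-definite solution $H$ of the inequalities \eqref{2.8}, factor $H = T^* T$ with $T$ bounded and invertible, and pass to the similar pair $(\widetilde C, \widetilde A) = (CT^{-1}, TAT^{-1})$, which is $n$-contractive by Proposition \ref{P:similar}(1); for part (2) the same factorization applied to a strictly positive-definite solution of \eqref{2.9} yields a similar $n$-isometric pair. The entire argument then amounts to transporting four properties back and forth across this similarity: $n$-output stability, the observability gramian, strong stability of the state operator, and exact observability.

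The key computation I would record first is that similarity intertwines the resolvents, $(I - z\widetilde A)^{-n} = T (I - zA)^{-n} T^{-1}$, whence $\cO_{n,\widetilde C, \widetilde A} = \cO_{n,C,A} T^{-1}$ and $\cG_{n,C,A} = T^* \cG_{n,\widetilde C, \widetilde A} T$. Since $T$ and $T^{-1}$ are bounded, this at once gives that $(C,A)$ is $n$-output stable if and only if $(\widetilde C, \widetilde A)$ is, that the two gramians are bounded (and bounded below) together, and—using $\widetilde A^k = T A^k T^{-1}$—that $A$ is strongly stable if and only if $\widetilde A$ is. I would also verify, which is exactly the manipulation already carried out in the proof of Proposition \ref{P:similar}, that conjugation $H' \mapsto T^{*-1} H' T^{-1}$ is a definiteness-preserving bijection between the positive-semidefinite solutions of \eqref{2.9} for $(C,A)$ and those for $(\widetilde C, \widetilde A)$, carrying $\cG_{n,C,A}$ to $\cG_{n,\widetilde C, \widetilde A}$ and $H = T^* T$ to $I_\cX$.

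With these transfer principles in hand the deductions are short. For part (1), Proposition \ref{P:2-1.1}(1) applied to the $n$-contractive pair $(\widetilde C, \widetilde A)$ shows it is $n$-output stable and that $\cG_{n,\widetilde C,\widetilde A}$ is the unique positive-semidefinite solution of \eqref{2.9} for $(\widetilde C, \widetilde A)$ precisely when $\widetilde A$ is strongly stable; transporting along the similarity then yields $n$-output stability of $(C,A)$ together with the stated uniqueness-versus-strong-stability equivalence. Part (2) runs identically via Proposition \ref{P:2-1.1}(2): the similar $n$-isometric pair is $n$-output stable, and when $\widetilde A$ is strongly stable $\cO_{n,\widetilde C, \widetilde A}$ is isometric, hence exactly observable; the relation $\cO_{n,C,A} = \cO_{n,\widetilde C, \widetilde A} T$ with $T$ bounded below then forces $\cO_{n,C,A}$ to be bounded below, giving exact observability of $(C,A)$.

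I expect no genuine obstacle, since the content lies entirely in the bookkeeping. The one point demanding care is that the uniqueness clauses concern the full solution set of \eqref{2.9}, so I must confirm that $H' \mapsto T^{*-1} H' T^{-1}$ is a bijection \emph{onto} the solution set of the system for $(\widetilde C, \widetilde A)$—not merely that it carries one solution to another—which is precisely why I isolate this bijection as a preliminary step rather than reusing the single-solution calculation in Proposition \ref{P:similar}.
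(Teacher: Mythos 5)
Your proposal is correct and is exactly the argument the paper intends: Proposition \ref{P:2-1.1'} is stated there as an immediate consequence of Proposition \ref{P:similar} combined with Proposition \ref{P:2-1.1}, with no written proof, and your similarity-transfer bookkeeping (including the check that $H' \mapsto T^{*-1}H'T^{-1}$ is a bijection between the full solution sets of \eqref{2.9}) supplies precisely the details the paper leaves implicit.
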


The last part of Proposition \ref{P:2-1.1'} has a converse.
\begin{proposition} \label{P:2-1.1'converse}
Suppose that the pair $(C,A)$ is $n$-output-stable and
exactly observable.  Then $A$ is strongly stable.
\end{proposition}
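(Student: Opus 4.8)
The plan is to reduce everything to the first-level gramian $\cG_{1,C,A}$ and exploit a telescoping of the associated Stein equality. First I would invoke Lemma \ref{L:identities}: since $(C,A)$ is $n$-output stable, it is in particular $1$-output stable, so $G:=\cG_{1,C,A}$ is a bounded positive operator given by the strongly convergent series \eqref{2.1} with $n=1$, namely $G=\sum_{k=0}^\infty A^{*k}C^*CA^k$, and it satisfies the Stein equality \eqref{id0} with $m=1$, that is $G-A^*GA=\cG_{0,C,A}=C^*C$.

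Next I would translate exact observability into a coercivity estimate. By definition, exact observability means $\cO_{1,C,A}$ is bounded below, or equivalently $G=(\cO_{1,C,A})^*\cO_{1,C,A}\ge \delta I_\cX$ for some $\delta>0$. The desired conclusion $\|A^Nx\|\to 0$ will then follow once I show that $\langle GA^Nx,\,A^Nx\rangle\to 0$, because the coercivity gives $\langle GA^Nx,\,A^Nx\rangle\ge \delta\|A^Nx\|^2$.

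The key computation is the telescoping of the Stein equality. Iterating $G-A^*GA=C^*C$ yields
$$
G-A^{*N}GA^N=\sum_{k=0}^{N-1}A^{*k}C^*CA^k .
$$
As $N\to\infty$ the right-hand side converges strongly to $G$ itself, since it is precisely the partial sum of the series \eqref{2.1} defining $\cG_{1,C,A}$. Hence $A^{*N}GA^N\to 0$ in the strong operator topology, and in particular $\langle GA^Nx,\,A^Nx\rangle=\langle A^{*N}GA^Nx,\,x\rangle\to 0$ for every $x\in\cX$. Combining this with the coercivity bound gives $\delta\|A^Nx\|^2\le \langle GA^Nx,\,A^Nx\rangle\to 0$, so $\|A^Nx\|\to 0$ for all $x\in\cX$; that is, $A$ is strongly stable in the sense of Definition \ref{D:1}.

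I do not expect a serious obstacle here: the argument is a direct consequence of the level-$1$ Stein equality together with the coercivity built into exact observability. The only points requiring care are bookkeeping ones—using exact observability at level $1$ (the lower bound is on $\cG_{1,C,A}$, not on $\cG_{n,C,A}$), and confirming that the telescoped partial sums converge strongly to $G$, which is exactly what $1$-output stability guarantees.
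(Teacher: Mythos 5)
Your argument is correct. Every ingredient you use is available in the paper: $n$-output stability gives $1$-output stability of $(C,A)$ by Lemma \ref{L:identities}, so $G=\cG_{1,C,A}$ is bounded and satisfies $G-A^*GA=C^*C$ by \eqref{id0}; the paper's definition of exact observability is precisely that $\cG_{1,C,A}$ is bounded below; and the telescoped identity $G=A^{*N}GA^N+\sum_{k=0}^{N-1}A^{*k}C^*CA^k$ together with the strong convergence of the partial sums to $G$ forces $A^{*N}GA^N\to 0$ strongly, whence $\delta\|A^Nx\|^2\le\langle GA^Nx,A^Nx\rangle\to 0$. This is essentially the same telescoping-plus-coercivity idea as the paper's proof, but carried out one level down: the paper substitutes $H=\cG_{n,C,A}$ into the level-$n$ identity \eqref{au2}, identifies the remainder terms $\Gamma_{n-j,A}[\cG_{n,C,A}]$ as the lower-order gramians $\cG_{j,C,A}$ via \eqref{2.6}, and extracts $A^{*N+n}\cG_{n,C,A}A^{N+n}\to 0$, finally using that $\cG_{n,C,A}$ is strictly positive definite (which itself rests on $\cG_{n,C,A}\ge\cG_{1,C,A}$). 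Your reduction to the first-level gramian sidesteps all of that bookkeeping and uses the coercivity exactly where the hypothesis places it, namely on $\cG_{1,C,A}$; what the paper's version buys in exchange is that it reuses the identity \eqref{au1}--\eqref{au2} already established for Theorem \ref{T:1.1} and stays uniformly within the level-$n$ framework. Either way the proposition is proved; your route is the more economical one.
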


\begin{proof} Observe that if $H$ is any solution to the system \eqref{2.9}, then 
\eqref{au1} takes the form
\begin{equation}
\sum_{j=0}^N \bcs{j+n-1\\ j}A^{*j}C^*CA^j
=H-\sum_{j=1}^{n}\bcs{N+n\\ N+j}A^{*N+j}\Gamma_{n-j,A}[H]A^{N+j}
\label{au2}   
\end{equation}
and still holds for all $N\ge 0$. 
If $(C, A)$ is $n$-output-stable and exactly observable,
then the observability gramian ${\mathcal G}_{n,C,A}$
is a strictly positive-definite solution of the system \eqref{2.9}.
Plugging $H={\mathcal G}_{n,C,A}$ into \eqref{au2} gives
\begin{align}
{\mathcal G}_{n,C,A}=&\sum_{k=0}^N \bcs{k+n-1\\ k}A^{*k}C^*CA^k
+\sum_{j=1}^n \bcs{N+n \\
N+j}A^{* N+j}\Gamma_{n-j,A}[{\mathcal G}_{n,C,A}]A^{N+j}\notag\\
=&\sum_{k=0}^N \bcs{k+n-1\\ k}A^{*k}C^*CA^k
+\sum_{j=1}^n \bcs{N+n \\
N+j}A^{* N+j}{\mathcal G}_{j.C,A}A^{N+j}
\label{2.22}
\end{align}
where the last equality follows from \eqref{2.6}. From the infinite-series 
representation  \eqref{2.1} for ${\mathcal G}_{C,A,n}$, taking limits in
\eqref{2.22} gives
$$
\lim_{N \to \infty}\bcs{N+n \\ N+j}A^{* N+j}{\mathcal G}_{j,C,A}A^{N+j}=0
$$
for $j=1,\ldots,n$. In particular, we have
\begin{equation}
\lim_{N \to \infty}A^{* N+n}{\mathcal G}_{n,C,A}A^{N+n}=0.
\label{2.23}
\end{equation}
Since ${\mathcal G}_{n,C,A}$ is strictly positive definite, we conclude that 
there is an $\varepsilon > 0$ so that
$$
        \varepsilon  \| x \|^{2} \le \langle {\mathcal G}_{n,C,A} x,
        x \rangle\quad  \text{for all} \quad x \in {\mathcal X}.
$$
Upon combining \eqref{2.23} with the latter relations (with $x$ replaced by $A^Nx$) we get  
$$
\varepsilon \| A^Nx\|^2\le\langle {\mathcal G}_{n,C,A}A^{N} x, \, A^{N}x\rangle \to 0
$$
for all $x \in {\mathcal X}$, and we conclude that $A$ is strongly stable as asserted.
\end{proof}

\smallskip

We conclude this section with formal introduction of 
backward-shifted versions of the observability operator $\cO_{n,C,A}$ \eqref{0.9} and 
observability gramian $\cG_{n,C,A}$ \eqref{nobsgram}.  The latter operators 
can be expressed as 
$$
\cO_{n,C,A}: \; x\to C R_{n}(zA)x\quad\mbox{and}\quad \cG_{n,C,A} = R_{n}(B_{A})[C^{*}C]
$$
where $R_{n}(z)=(1-z)^{-n}$ and where $B_{A}$ is the operator on $\cL(\cX)$ given by 
\eqref{defba}. We now introduce the backward-shifted variants of these objects by simply 
replacing  the function $R_n$ in the two formulas above by its backward shifts $R_{n,k}$:
\begin{align}
&{\Ob}_{n,k,C,A}: \;  x \to C R_{n,k}(zA)x=
\sum_{j=0}^\infty\bcs{n+j+k-1\\ j+k}(CA^jx)z^j,
\label{defdelta}\\
&{\Gr}_{n,k,C,A}:= R_{n,k}(B_{A})[C^{*}C]=
\sum_{j=0}^\infty\bcs{n+j+k-1\\ j+k}A^{j*}C^*CA^j.
\label{defR}
\end{align}
Observe that the latter power series representations follow from that in 
\eqref{st41}. 
Letting $k=0$ in \eqref{defdelta}, \eqref{defR} we conclude
\begin{equation}
{\Ob}_{n,0,C,A}={\cO}_{n,C,A}\quad\mbox{and}\quad {\Gr}_{n,0,C,A}=\cG_{n,C,A}.
\label{st2}
\end{equation}
If $k\ge 1$, then we can use the formula \eqref{07-1} along with \eqref{defdelta}, \eqref{defR}
to get representations
$$
{\Ob}_{n,k,C,A}=\sum_{\ell= 1}^{n} \bcs{\ell+k-2\\ \ell - 1}\cO_{n-\ell+1,C,A},\qquad
{\Gr}_{n,k,C,A}=\sum_{\ell= 1}^{n} \bcs{\ell+k-2\\ \ell - 1}\cG_{n-\ell+1,C,A}
$$
which imply in particular that for an $n$-output stable pair $(C,A)$, 
the operator ${\Ob}_{n,k,C,A}: \, \cX\to \cA_n(\cY)$ is bounded for all $k\ge 0$.
\begin{proposition} \label{P:wghtSteinid} The weighted Stein identity
    \begin{equation}
A^*\Gr_{n,k+1,C,A}A+\bcs{n+k-1 \\ k}\cdot C^*C=\Gr_{n,k,C,A}
\label{7.3}
\end{equation}
holds for all integers $k\ge 0$.
\end{proposition}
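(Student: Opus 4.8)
The plan is to prove the identity \eqref{7.3} directly from the series representation \eqref{defR} for the gramian $\Gr_{n,k,C,A}$ by a single reindexing, and to cross-check it against the scalar recursion \eqref{difquotRnk} of Lemma \ref{L:basic} transported through the hereditary functional calculus of Section \ref{S:obs}.

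First I would write out $A^*\Gr_{n,k+1,C,A}A$ using \eqref{defR}: sandwiching the series for $\Gr_{n,k+1,C,A}$ between $A^*$ and $A$ gives $\sum_{j\ge 0}\bcs{n+j+k\\ j+k+1}A^{*(j+1)}C^*CA^{j+1}$. Shifting the summation index by setting $i=j+1$ turns the coefficient $\bcs{n+j+k\\ j+k+1}$ into $\bcs{n+i+k-1\\ i+k}$ and the operator factor into $A^{*i}C^*CA^i$, so that the sum becomes exactly the tail $\sum_{i\ge 1}\bcs{n+i+k-1\\ i+k}A^{*i}C^*CA^i$ of the series for $\Gr_{n,k,C,A}$. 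The single missing $i=0$ term of that series is $\bcs{n+k-1\\ k}C^*C$, and adding it back reconstitutes the full series $\Gr_{n,k,C,A}$, which is precisely \eqref{7.3}.

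An alternative, essentially equivalent, route is to feed the scalar recursion \eqref{difquotRnk}, namely $R_{n,k}(w)=\bcs{n+k-1\\ k}+wR_{n,k+1}(w)$, into the functional calculus $w\mapsto B_A$ and then evaluate on $C^*C$; since $B_A[R_{n,k+1}(B_A)[C^*C]]=A^*\Gr_{n,k+1,C,A}A$, this immediately yields \eqref{7.3}. This is just the operator-level analogue \eqref{st4} of the recursion, applied to $C^*C$ rather than to the identity.

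The only point requiring care is convergence, since the reindexing rearranges an infinite series of operators. I would therefore first restrict to the case where $(C,A)$ is $n$-output stable, in which situation every series occurring converges in the strong operator topology, as recorded after \eqref{defR}, and the manipulation is then rigorous term by term; the identity holds for all $k\ge 0$ as asserted. The hard part here is thus essentially only the bookkeeping of the binomial indices together with the justification of this convergence, and there is no substantive obstacle beyond that.
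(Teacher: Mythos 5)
Your proposal is correct, and your ``alternative route'' is verbatim the paper's proof: the paper applies the operator identity $R_{n,k}(B_A)-B_A\circ R_{n,k+1}(B_A)=\bcs{n+k-1\\ k}I_{\cL(\cX)}$ (a consequence of \eqref{difquotRnk}) to $C^*C$ and reads off \eqref{7.3} via \eqref{defR}. Your primary route, the direct reindexing of the series \eqref{defR}, is just that same identity unwound at the level of Taylor coefficients, and your convergence remark (strong convergence under $n$-output stability, which is the standing hypothesis in this section) is the right justification.
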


\begin{proof}
The operatorial equality
$$
 R_{n,k}(B_A) - B_A\circ R_{n,k+1}(B_A) = R_{n,k}(0)\cdot 
 I_{\cL(\cX)} = \bcs{n+k-1 \\ k}\cdot 
I_{\cL(\cX)}
$$
is a consequence of the identity \eqref{difquotRnk}. Applying this equality to the operator 
$C^*C$ gives, on account of \eqref{defR},
\begin{align*}
\left( R_{n,k}(B_{A}) - B_{A}\circ R_{n,k+1}(B_{A}) \right)[C^{*}C]
&=\Gr_{n,k,C,A} - A^{*} \Gr_{n,k+1,C,A} A\\
&=\bcs{n+k-1 \\ k}\cdot C^*C
\end{align*}
and we arrive at \eqref{7.3} as wanted.  
\end{proof}

As a consequence of Proposition \ref{P:RnkRn}, it turns out that 
$\Gr_{n,k,C,A}$ can be expressed in terms of $\cG_{n,C,A}$ as 
follows.

\begin{proposition}   \label{P:gramRnkRn}
    The operator $\Gr_{n,k,C,A}$ given by \eqref{defR} admits the 
    representation
    \begin{equation}   \label{sep1}
	\Gr_{n,k,C,A} = \sum_{\kappa = 0}^{n-1} \left( 
	\sum_{j=0}^{\kappa} \bcs{ k+n-1 \\ k} \bcs{n-1-\kappa + j \\ 
	j} (-1)^{j} \right) A^{* \kappa} \cG_{n,C,A} A^{\kappa}.
	\end{equation}
 \end{proposition}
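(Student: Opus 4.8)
The plan is to transport the scalar factorization of Proposition \ref{P:RnkRn} through the map $B_A \colon X \mapsto A^* X A$ on $\cL(\cX)$, in the same spirit as the operator identity \eqref{RnkRnzA} does for $A$ itself. Abbreviate the polynomial factor in \eqref{RnkRn} by $p_{n,k}(z) = \sum_{\kappa=0}^{n-1} c_\kappa z^\kappa$, where $c_\kappa = \sum_{j=0}^\kappa (-1)^j \binom{k+n-1}{k}\binom{n-1-\kappa+j}{j}$, so that Proposition \ref{P:RnkRn} reads $R_{n,k}(z) = R_n(z)\, p_{n,k}(z)$. Replacing the scalar $z$ by the operator $B_A$ yields the operatorial identity $R_{n,k}(B_A) = R_n(B_A)\circ \sum_{\kappa=0}^{n-1} c_\kappa B_A^\kappa$ on $\cL(\cX)$, and evaluating both sides at $C^*C$ turns the left-hand side into $\Gr_{n,k,C,A}$ by definition \eqref{defR}.

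For the right-hand side I would use that each power $B_A^\kappa$ commutes with $R_n(B_A)$, both being functions of the single operator $B_A$, together with $R_n(B_A)[C^*C] = \cG_{n,C,A}$ and $B_A^\kappa[X] = A^{*\kappa} X A^\kappa$. This gives $R_n(B_A)\circ B_A^\kappa[C^*C] = B_A^\kappa[\cG_{n,C,A}] = A^{*\kappa}\cG_{n,C,A}A^\kappa$, and weighting by $c_\kappa$ and summing over $\kappa$ reproduces exactly the right-hand side of \eqref{sep1}.

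The one point needing care is that $B_A$ need not have norm or spectral radius below $1$, so that $R_n(B_A)$ and $R_{n,k}(B_A)$ are defined only through their strongly convergent series applied to $C^*C$ under the standing $n$-output-stability hypothesis. Rather than appeal to an abstract functional calculus, I would verify the commutativity step directly on the series: from $\cG_{n,C,A} = \sum_{j\ge0}\binom{n+j-1}{j}A^{*j}C^*CA^j$ in \eqref{2.1} and the strong continuity of $X \mapsto A^{*\kappa}XA^\kappa$, one gets $A^{*\kappa}\cG_{n,C,A}A^\kappa = \sum_{j\ge0}\binom{n+j-1}{j}A^{*(j+\kappa)}C^*CA^{j+\kappa}$. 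Collecting, on the right-hand side of \eqref{sep1}, the coefficient of $A^{*m}C^*CA^m$ then reduces the whole identity to $\binom{n+m+k-1}{m+k} = \sum_{\kappa=0}^{\min(m,n-1)} c_\kappa \binom{n+m-\kappa-1}{m-\kappa}$, which is precisely the equality of the coefficients of $z^m$ on the two sides of \eqref{RnkRn}. Thus Proposition \ref{P:RnkRn} supplies the only nontrivial ingredient, and the sole remaining obstacle is the routine reindexing of the double sum together with the strong-convergence bookkeeping; I anticipate no genuine difficulty beyond this organization.
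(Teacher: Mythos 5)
Your proposal is correct and follows essentially the same route as the paper: transport the scalar identity \eqref{RnkRn} to the operatorial identity $R_{n,k}(B_A) = \bigl(\sum_{\kappa} c_\kappa B_A^{\kappa}\bigr)\circ R_n(B_A)$ and apply it to $C^*C$, invoking \eqref{defR}, \eqref{defba} and \eqref{defGGamma}. Your additional coefficient-of-$A^{*m}C^*CA^m$ bookkeeping is a welcome rigorous justification of the functional-calculus step that the paper states only formally, but it is the same argument.
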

 
 \begin{proof}
     Note that the identity \eqref{RnkRn} leads to the operator 
     identity
     $$
     R_{n,k}(B_{A}) = 
      \sum_{\kappa = 0}^{n-1} \left( 
  \sum_{j = 0}^{\kappa} \bcs{k+n-1 \\ k} \bcs{n-1-\kappa + j \\ j} 
 (-1)^{j}  \right) (B_{A})^{\kappa} \circ R_{n}(B_{A}).
 $$
 Application of this operator to $C^{*}C$ now leads to the identity 
 \eqref{sep1} due to \eqref{defR}, \eqref{defba} and \eqref{defGGamma}.
  \end{proof}

\section{Observability-operator range spaces and reproducing
        kernel Hilbert spaces}\label{S:NC-Obs}

Let $S_n$ denote the shift operator on the space $\cA_n(\cY)$ defined as 
$S_n: \, f(z)\to zf(z)$. Reproducing kernel calculations show that 
its adjoint $S_n^*$ is given by 
\begin{equation}
S^*_n f=\sum_{j=0}^\infty\frac{j+1}{n+j}\cdot 
f_{j+1}z^j=\sum_{j=0}^\infty\frac{\mu_{n,j+1}}{\mu_{n,j}}\cdot 
f_{j+1}z^j\quad\mbox{if}\quad
f(z)=\sum_{j=0}^\infty f_jz^j.
\label{3.1}
\end{equation}
Iterating the latter formula gives 
\begin{equation}
S^{*k}_n f=\sum_{j=0}^\infty\frac{(k+j)!(n+j-1)!}{j!(n+k+j-1)!}\cdot f_{k+j}z^j
=\sum_{j=0}^\infty\frac{\mu_{n,j+k}}{\mu_{n,j}}\cdot f_{k+j}z^j.
\label{3.1a}
\end{equation}
The operator $S^*_n$ is a strongly stable $n$-hypercontraction. 
As was shown in \cite[Lemma 5.1]{olieot} for $f$ of the form \eqref{3.1},
\begin{equation}
\sum_{j=0}^n(-1)^j\bc{n \\ j}\|S_n^{*j}f\|^2_{\cA_n}=\|f_0\|^2.
\label{3.2}
\end{equation}
The latter equality means that the pair $(E,S_n^*)$ is $n$-isometric in the sense 
of Definition \ref{D:2}, where  $E: \, \cA_n(\cY)\to \cY$ is the evaluation operator
defined by $Ef=f(0)$. It follows from \eqref{3.1a} that 
\begin{equation}  \label{E*Snk}
ES^{*k}_n f=  \frac{j! (n-1)!}{(n+k-1)!}f_{k} =   \mu_{n,k} f_{k} 
\end{equation}
and therefore,
$$
{\mathcal O}_{n,E,S_n^*}f=
E(I-zS_n^*)^{-n}f=\sum_{j=0}^\infty  \bcs{j+n-1\\ j} \left(E
S_{n}^{*j}f\right) z^{j}= \sum_{j=0}^{\infty} f_{j} z^{j} = f
$$
so that the observability operator ${\mathcal O}_{n,E,S_n^*}$ equals the identity operator
on $\cA_n(\cY)$.

Associated with an $n$-output-stable pair $(C,A)$ is the range of the
observability operator
$$
\operatorname{Ran}{\mathcal O}_{n,C,A} =
\{ C (I - zA)^{-n}x \colon \; x \in {\mathcal X}\}.
$$
\begin{theorem} \label{T:1.2}
        Suppose that $(C,A)$ is an $n$-output-stable pair.  Then:
        \begin{enumerate}
	    
\item The intertwining relation 
\begin{equation}
S_n^*{\mathcal O}_{n,C, A}={\mathcal O}_{n,C, A}A
\label{3.3}
\end{equation}
holds and hence the linear manifold ${\mathcal M}=\operatorname{Ran} 
{\mathcal O}_{n,C,A}$  is $S_n^*$-invariant.

\item  Let $H\in\cL(\cX)$ satisfy inequalities \eqref{2.8}
and let ${\mathcal X}'$ be the completion of ${\mathcal X}$ with
inner product $\| [x]\|_{{\mathcal X}'}^{2} = \langle H x, x
\rangle_{{\mathcal X}}$ (where $[x]$ denotes the equivalence class
modulo $\operatorname{Ker} H$ generated by $x$).
Then $A$ and $C$ extend to define bounded operators
$A' \colon \cX' \to \cX'$ and $C' \colon {\mathcal X}' \to \cY$ and the observability operator 
${\mathcal O}_{n,C,A}$ extends to define a contraction operator
${\mathcal O}_{n,C', A'}: \, {\mathcal X}'\to \cA_n({\cY})$. Moreover,
${\mathcal O}_{n,C',A'}$ is an isometry if and only if $H$ satisfies relations \eqref{2.9}
and $A'$ is strongly stable,  i.e.,
$\langle H A^Nx, A^{N}x \rangle \to 0$ for all  $x \in\cX$ as $N\to\infty$.

\item If the linear manifold ${\mathcal M}:=
\operatorname{Ran} {\mathcal O}_{n,C,A}$ is given the lifted norm
$$
         \| {\mathcal O}_{n,C,A} x \|_{{\mathcal M}}^{2} =
        \inf_{y \in {\mathcal X} \colon {\mathcal O}_{n,C,A}y =
        {\mathcal O}_{n,C, A}x} \langle H y, y
         \rangle_{{\mathcal X}},
$$
then
\begin{enumerate}
\item ${\mathcal M}$ can be completed to ${\mathcal M}' =
\operatorname{Ran}{\mathcal O}_{n,C',A'}$ with contractive inclusion
in $\cA_n({\cY})$:
$$
\| f \|^{2}_{\cA_n} \le \| f \|^{2}_{{\mathcal M}'} \quad\text{for all}\quad f \in {\mathcal M}'.
$$
Furthermore, ${\mathcal M}'$ is isometrically equal to the reproducing kernel
Hilbert space with reproducing kernel $K_{n,C,A,H}$ given by
\begin{equation}  \label{3.4}
K_{n,C,A,H}(z, \zeta ) = C(I -z A)^{-n}H(I-\overline{\zeta} A^{*})^{-n} C^{*}.
\end{equation}

\item The operator $S_n^*: \, \cM\to \cM$ defined as in \eqref{3.1} is a contraction
and 
\begin{equation}
\sum_{j=0}^n(-1)^j\bc{n \\ j}\|S_n^{*j}f\|^2_{\cM}\ge \|f(0)\|^2_\cY
\quad\mbox{for all} \; \; f\in{\mathcal M}.\label{3.6}
\end{equation}
Moreover, if the Stein equality in \eqref{2.9} holds, then
\eqref{3.6} holds with equality.
\end{enumerate}

\item Conversely, if ${\mathcal M}$ is a Hilbert space contractively
included in $\cA_n({\cY})$ which is invariant under $S_n^{*}$ which in turn is a contraction on 
$\cM$ and for which \eqref{3.6} holds, then there is an $n$-contractive pair 
$(C,A)$ 
such that ${\mathcal M} = {\mathcal H}(K_{n,C,A,I}) = \operatorname{Ran}
{\mathcal O}_{n,C,A}$ isometrically. In case \eqref{3.6} holds with
equality, then $(C,A)$ can be taken to be $n$-isometric.
The canonical-model choice of such a pair $(C,A)$ is
$$
  (C,A) = (E|_{\cM}, S_{n}^{*}|_{\cM}) \text{ where } E \colon f \in 
  \cA_{n}(\cY) \mapsto f(0).
$$
\end{enumerate}
\end{theorem}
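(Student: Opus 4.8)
The plan is to make the observability operator $\cO_{n,C,A}$ of \eqref{0.9} the central object and to read each assertion off the Stein-type identities of Section \ref{S:obs} together with range-space reproducing-kernel theory. For part (1) I would verify the intertwining \eqref{3.3} coefficientwise: writing $\cO_{n,C,A}x=\sum_j\binom{n+j-1}{j}(CA^jx)z^j$ and applying the formula \eqref{3.1} for $S_n^*$, the $j$-th coefficient of $S_n^*\cO_{n,C,A}x$ is $\frac{\mu_{n,j+1}}{\mu_{n,j}}\binom{n+j}{j+1}CA^{j+1}x=\binom{n+j-1}{j}CA^j(Ax)$, which is exactly the $j$-th coefficient of $\cO_{n,C,A}(Ax)$. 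Then $S_n^*\cM=\operatorname{Ran}(\cO_{n,C,A}A)\subseteq\operatorname{Ran}\cO_{n,C,A}=\cM$, so $\cM$ is $S_n^*$-invariant.

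For part (2) I would pass to the completion $\cX'$ and first check that \eqref{2.8} forces $A$ and $C$ to descend to bounded $A',C'$ on $\cX'$: iterating $H\ge A^*HA$ gives $A^{*k}HA^k\le H$, so $A'$ is a contraction, and combined with $\Gamma_{n,A}[H]\ge C^*C$ this yields $\|Cy\|_\cY^2\le\langle\Gamma_{n,A}[H]y,y\rangle\le 2^{n-1}\langle Hy,y\rangle$, bounding $C'$. By construction $(C',A')$ satisfies \eqref{2.8} with $H=I_{\cX'}$, hence is $n$-contractive, so Proposition \ref{P:2-1.1}(1) gives $\cG_{n,C',A'}\le I_{\cX'}$, i.e.\ $\|\cO_{n,C',A'}x\|_{\cA_n}^2=\langle\cG_{n,C',A'}x,x\rangle\le\|x\|_{\cX'}^2$; this is the claimed contractivity, and $\cO_{n,C',A'}$ visibly extends $\cO_{n,C,A}$. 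It is isometric exactly when $\cG_{n,C',A'}=I_{\cX'}$, which by Theorem \ref{T:1.1}(3) (equivalently Proposition \ref{P:2-1.1}(2)) holds iff $(C',A')$ is $n$-isometric, i.e.\ the Stein equality \eqref{2.9} is satisfied, and $A'$ is strongly stable.

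For part (3) I would invoke the general range-space principle: for bounded $T\colon\cX'\to\cA_n(\cY)$ the range carries the pushforward norm and is a reproducing kernel Hilbert space with kernel $E_zE_\zeta^*$, where $E_z=C'(I-zA')^{-n}$; applied to $T=\cO_{n,C',A'}$ this produces the kernel \eqref{3.4}, and the contractivity of part (2) gives the contractive inclusion $\|f\|_{\cA_n}\le\|f\|_{\cM'}$. The metric claims in (3b) are the delicate point: the intertwining \eqref{3.3} transports contractivity of $A'$ to contractivity of $S_n^*$ on $\cM'$, while \eqref{3.6} amounts to asserting that the canonical pair $(E|_{\cM'},S_n^*|_{\cM'})$ is $n$-contractive, which I would derive by transporting the inequality $\Gamma_{n,A'}[I_{\cX'}]\ge(C')^*C'$ through the coisometry $\cO_{n,C',A'}$ (noting $E\cO_{n,C',A'}=C'$) and using the Stein relations \eqref{id0}, with equality precisely in the Stein-equality / $n$-isometric case.

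Finally, for the converse in part (4) the canonical model is forced: set $A=S_n^*|_\cM$ and $C=E|_\cM$, both bounded ($A$ by hypothesis, $C$ since $\cM$ is contractively included in the reproducing kernel Hilbert space $\cA_n(\cY)$). The identities $\langle\Gamma_{n,A}[I_\cM]f,f\rangle=\sum_{j=0}^n(-1)^j\binom{n}{j}\|S_n^{*j}f\|_\cM^2$ and $\langle C^*Cf,f\rangle=\|f(0)\|_\cY^2$ show that the standing hypothesis \eqref{3.6} is \emph{exactly} $\Gamma_{n,A}[I_\cM]\ge C^*C$; with $I\ge A^*A$ from contractivity of $S_n^*$ and Lemma \ref{squeeze} supplying the intermediate positivities, $(C,A)$ is $n$-contractive. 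Using \eqref{E*Snk} the observability operator telescopes to the inclusion,
$$
\cO_{n,C,A}f=\sum_{j=0}^\infty\binom{n+j-1}{j}\mu_{n,j}f_jz^j=\sum_{j=0}^\infty f_jz^j=f,
$$
because $\binom{n+j-1}{j}\mu_{n,j}=1$ by \eqref{defmu}; being injective, $\cO_{n,C,A}$ trivializes the infimum defining the lifted $H=I_\cM$ norm, so $\operatorname{Ran}\cO_{n,C,A}=\cM=\cH(K_{n,C,A,I})$ isometrically, and if \eqref{3.6} holds with equality then $\Gamma_{n,A}[I]=C^*C$, making $(C,A)$ $n$-isometric. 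I expect the main obstacle to lie not in part (4) but in parts (2)--(3): handling the possibly degenerate completion $\cX'$, pinning down the kernel \eqref{3.4} rigorously, and—most delicately—the metric transfer yielding \eqref{3.6} on the range space and the strong-stability characterization of when $\cO_{n,C',A'}$ is isometric, which rests on the uniqueness theory of Theorem \ref{T:1.1}(3).
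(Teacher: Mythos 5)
Your proposal follows the paper's own route essentially step for step: the coefficientwise verification of \eqref{3.3}, the passage to the completion $\cX'$ and the bound $\cG_{n,C',A'}\le H$ from Theorem \ref{T:1.1} for contractivity, the range-space reproducing-kernel principle for (3a), the transport of the operator inequalities through the coisometry $\cO_{n,C',A'}$ for (3b), and the canonical model $(E|_{\cM},S_n^*|_{\cM})$ with $\cO_{n,E,S_n^*}=I$ for (4). Parts (1), (3) and (4) are correct as written.

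The one genuine gap is the forward direction of the isometry characterization in part (2). You assert that $\cG_{n,C',A'}=I_{\cX'}$ holds iff $(C',A')$ is $n$-isometric and $A'$ is strongly stable, "by Theorem \ref{T:1.1}(3)". That theorem supplies the \emph{if} direction (strong stability forces the unique positive semidefinite solution of \eqref{2.9} to be the gramian), but not the \emph{only if}: knowing that the minimal solution $\cG_{n,C',A'}$ happens to equal $I_{\cX'}$ does not, via the uniqueness criterion, rule out failure of strong stability --- non-uniqueness would only produce additional solutions of the form $I+\Delta$ lying above the gramian, which contradicts nothing. The paper instead extracts both conclusions from the truncation identity \eqref{2.5}/\eqref{au1}: writing $H$ as the partial sum of the gramian series plus the positive semidefinite remainder $\sum_{j=1}^n\binom{N+n}{N+j}A^{*N+j}\Gamma_{n-j,A}[H]A^{N+j}$, the equality $\cG_{n,C',A'}=H$ forces the remainder to tend strongly to zero; its $j=n$ term gives $A^{*N}HA^{N}\to 0$, i.e.\ strong stability of $A'$, and the vanishing of every term of the residual series $\sum_{j}\binom{n+j-1}{j}A^{*j}\left(\Gamma_{n,A}[H]-C^*C\right)A^j$, in particular the $j=0$ term, gives the Stein equality. (The Stein equality alone could also be read off from $\Gamma_{n,A'}[\cG_{n,C',A'}]=C'^*C'$, identity \eqref{2.6}, but strong stability still requires the remainder argument.) With that repair your argument coincides with the paper's.
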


\begin{proof}[Proof of (1):]
Making use of power series expansion \eqref{07} and of 
\eqref{3.1} we get \eqref{3.3}:
\begin{align*}
S_n^*{\mathcal O}_{C, A,n}x=
S_n^*(C(I-zA)^{-n}x)&=\sum_{k=0}^\infty \frac{k+1}{n+k}\cdot\bc{n+k \\ k+1}(CA^{k+1}x)z^k\\
&=\sum_{k=0}^\infty \bc{n+k-1 \\ k}(CA^{k+1}x)z^k\\ &=C(I-zA)^{-n}Ax={\mathcal O}_{C, A,n}Ax.
\end{align*}
\noindent
{\em Proof of (2)}:
Inequalities \eqref{2.8} amount to the statement
that the pair $(C, A)$ is $n$-contractive  and well-defined on the
dense subset $[{\mathcal X}]$ of ${\mathcal X}'$ (where $[x]$ is the
equivalence class containing $x$) and hence extends to an
$n$-contractive pair $(C', A')$ on all of ${\mathcal X}'$
and moreover relations \eqref{au1} hold for all $N\ge 0$. 
By Theorem \ref{T:1.1}, $\cG_{n,C,A} x \le H$ and therefore
$$
\| \cO_{n,C,A}x\|_{\cA_{n}(\cY)} = \langle \cG_{n,C,A}x,\, 
x\rangle_{\cX} \le \langle H x, \, x \rangle_{\cX},
$$
so $\cO_{n,C,A}$ is contractive from $\cX$ to $\cA_n(\cY)$. It follows from \eqref{au1} that 
the limit
\begin{equation} \label{3.7}
\Delta_{H,A} = \lim_{N \to\infty} \sum_{j=1}^n \bcs{N+n \\
N+j}A^{* N+j}\Gamma_{n-j,A}[H]A^{N+j}
\end{equation}
exists in the strong sense.  We have from \eqref{au1}
\begin{align*}
0&\le \sum_{j=0}^N \bcs{j+n-1\\ j}A^{*j}\left(\Gamma_{n,A}[H]-C^*C\right)A^j\\
&\le H-\sum_{j=1}^n \bcs{N+n \\
N+j}A^{* N+j}\Gamma_{n-j,A}[H]A^{N+j}-\sum_{j=0}^N \bcs{j+n-1\\ j}A^{*j}C^*CA^j
\end{align*}
and letting $N\to \infty$ we get, on account of \eqref{3.7} and \eqref{2.1},
\begin{equation}
0\le \sum_{j=0}^\infty \bcs{j+n-1\\ j}A^{*j}\left(\Gamma_{n,A}[H]-C^*C\right)A^j
\le H-\Delta_{H,A}-\cG_{n,C,A}.
\label{au5}
\end{equation}
By definition, ${\mathcal O}_{n,C',A'} \colon\cX'\to \cA_n({\cY})$ being an isometry means
that ${\mathcal G}_{n,C, A} = H$ in which case \eqref{au5} forces $\Delta_{H,A} = 0$
and equalities throughout \eqref{au5}. Since all the terms on the right hand side of \eqref{3.7}
are positive semidefinite,  the condition $\Delta_{H,A}=0$ implies in particular that 
$$
0=\lim_{N \to\infty}A^{* N+n}\Gamma_{0,A}[H]A^{N+n}=\lim_{N \to\infty}
A^{* N+n}HA^{N+n}=\lim_{N \to\infty}A^{*N}HA^{N},
$$
so that $A'$ is strongly stable. All the terms in the series in \eqref{au5} are nonnegative and 
therefore, each term equals zero. The term corresponding to the index $j=0$ is 
$\Gamma_{n,A}[H]-C^*C$. Hence $\Gamma_{n,A}[H]=C^*C$ and thus the Stein equality in 
\eqref{2.9} holds.
Conversely, if $A'$ is strongly stable and relations \eqref{2.9} hold, we conclude as in the 
proof of Theorem \ref{T:1.1} that  ${\mathcal G}_{n,C', A'} =H$, i.e., that ${\mathcal 
O}_{n,C',A'}$ is  an isometry from ${\mathcal X}'$ into $\cA_n(\cY)$.

\medskip
\noindent
{\em Proof of (3a)}:  Statement (3a) follows from general
principles laid out in \cite{NFRKHS} (see also \cite{BC, oljfa} for 
applications very close to the context here).  For the sake of 
completeness and since the same construction arises again in the 
sequel, we sketch the argument here.  Note that for $x \in \cX$ and 
$y \in \cY$ we have
\begin{align*}
    \langle \left(\cO_{n,C,A}x \right)(\zeta), y 
    \rangle_{\cY} & = \langle C (I - \zeta A)^{-n}x, y \rangle_{\cY}   \\
 & = \langle x, H (I - \overline{\zeta}A^{*})^{-n} C^{*} y 
    \rangle_{\cX^\prime} \\
    & = \langle C (I - \cdot A)^{-n}x, C (I - \cdot A)^{-n} H (I 
    - \overline{\zeta} A^{*})^{-n} C^{*} y \rangle_{\cM}
 \end{align*}
  where the last step follows from the assumption that 
$\cO_{n,C,A}$ is isometric from $\cX^\prime$ into $\cM$.  From this 
 computation we see that 
 $$ \langle \left(\cO_{n,C,A}x\right)(\zeta), y \rangle_{\cY} 
 =\langle \cO_{n,C,A} x, K_{n,C,A,H}(\cdot, \zeta) y \rangle_{\cM}
 $$
 and it follows that $K_{n,C,A,H}$ is the reproducing kernel for 
 $\cM$ as asserted.

\medskip 
\noindent
{\em Proof of (3b)}:
For $f$ of the form $f(z) = C(I-zA)^{-n}x$, we have
$$
\| f \|^{2}_{{\mathcal H}(K_{n,C,A, H})} = \langle H x, x\rangle_{\cX} \quad\mbox{and}
\quad f(0)=Cx.
$$
Now it follows from \eqref{3.3} that 
$$
\| S_n^{*j} f \|^{2}_{{\mathcal H}(K_{n,C,A, H})} =
\langle H A^j x, A^j x \rangle_{\cX} \quad\text{for} \quad  j\ge 1.
$$
With these substitutions, we see that $S_n^*$ is a contraction on $\cM$ if and only if 
$\langle H x, x \rangle_{\cX}\ge \langle H A x, A x \rangle_{\cX}$ for all 
$x\in\cX$ or, in operator form, $H\ge A^*HA$, and that on another hand,  inequality \eqref{3.6} 
is equivalent to
$$
\sum_{j=0}^n(-1)^j\bc{k \\ j}\langle HA^jx, \, A^jx\rangle_{\cX}\ge \|Cx\|^2_\cY
\quad\mbox{for all} \; \; x\in\cX,
$$
or in operator form,
\begin{equation}
\sum_{j=0}^n(-1)^j\bc{k \\ j}A^{*j}HA^j\ge C^*C\label{3.10}
\end{equation}
with equality in \eqref{3.6} equivalent to equality in \eqref{3.10}. To complete verification of 
part (3b), it remains to remark that 
relation \eqref{3.10} coincide with that in \eqref{2.8} (or with that in \eqref{2.9}
in case equality holds in \eqref{3.10}).

\medskip
\noindent
{\em Proof of (4)}:
Suppose that ${\mathcal M}$ is a Hilbert space included in $\cA_n(\cY)$ which is invariant under 
$S_n^{*}$ which is contractive on $\cM$ and let us assume that the inequality 
\eqref{3.6} holds. Let $A=S_n^*\vert_{\cM}$ and let 
$C$ be defined by $Cf=f(0)$ for all $f\in\cM$. In other words, $C=E\vert_{\cM}$. Inequalities 
$S_nS_n^*\le I_{\cM}$ and \eqref{3.6} mean that the pair $(C,A)$ defined this way is $n$-contractive.
Moreover, if equality holds in \eqref{3.6} for every $f\in\cM$, then the pair $(C,A)$ is 
$n$-isometric. It is readily seen that ${\mathcal O}_{n.C, A}={\mathcal O}_{n,E, 
S_n^*}\vert_{\cM}=I_\cM$. Therefore, for each $f \in {\mathcal M}$ we have
$\|f\|_{\cH(K_{n,C,A,I})}=\|f\|_{{\mathcal M}}$ and thus ${\mathcal M}=\cH(K_{n,C,A,I})$ 
isometrically. It then follows from part (3a) of the theorem that in fact $\cM$ is contractively 
included in $\cA_n(\cY)$.
\end{proof}

The following corollary is a simple, useful special case of the 
general situation laid out in Theorem \ref{T:1.2}.

\begin{corollary}  \label{C:1.2}
    Suppose that the $n$-output stable pair $(C,A)$ is also exactly 
    $n$-observable.  Then there is an $S_{n}^{*}$-invariant subspace 
    $\cM \subset \cA_{n}(\cY)$ so that $(C,A)$ is similar to the 
    $n$-isometric pair $(\widetilde C, \widetilde A)$ given by
    $$ (\widetilde C, \widetilde A) = (E|_{\cM}, S_{n}^*|_{\cM}), 
    \quad\text{where}\quad E \colon f \in \cA_{n}(\cY) \mapsto f(0).
    $$
\end{corollary}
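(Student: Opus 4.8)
The plan is to produce the required subspace as the range of the observability operator itself, and then to read off the asserted similarity directly from the intertwining relation \eqref{3.3} together with evaluation at the origin. First I would observe that exact $n$-observability says precisely that $\cG_{n,C,A}=\cO_{n,C,A}^{*}\cO_{n,C,A}$ is bounded and bounded below, so that $\cO_{n,C,A}$ is itself bounded below. Since $(C,A)$ is $n$-output-stable, $\cO_{n,C,A}\colon\cX\to\cA_n(\cY)$ is bounded; being also bounded below it has closed range. Hence $\cM:=\operatorname{Ran}\cO_{n,C,A}$, equipped with the norm inherited from $\cA_n(\cY)$, is a genuine closed subspace, and $T:=\cO_{n,C,A}\colon\cX\to\cM$ is a boundedly invertible map of state spaces. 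By Theorem \ref{T:1.2}(1) the intertwining relation \eqref{3.3} holds, so $\cM$ is $S_n^{*}$-invariant and $S_n^{*}|_{\cM}$, $E|_{\cM}$ make sense as the candidate pair.

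Next I would check that $T$ implements the similarity. Rewriting \eqref{3.3} as $S_n^{*}\,\cO_{n,C,A}=\cO_{n,C,A}\,A$ and reading both sides as maps into $\cM$ gives $TAT^{-1}=S_n^{*}|_{\cM}$. Evaluating $(\cO_{n,C,A}x)(0)=C(I-0\cdot A)^{-n}x=Cx$ gives $(E|_{\cM})\,T=C$, i.e.\ $CT^{-1}=E|_{\cM}$. Thus, setting $\widetilde C=E|_{\cM}$ and $\widetilde A=S_n^{*}|_{\cM}$, we have $\widetilde C=CT^{-1}$ and $\widetilde A=TAT^{-1}$, which is exactly the assertion that $(C,A)$ is similar to $(\widetilde C,\widetilde A)=(E|_{\cM},S_n^{*}|_{\cM})$.

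It then remains to confirm that this canonical-model pair is $n$-isometric. Because $\cM$ carries the $\cA_n(\cY)$-norm and is $S_n^{*}$-invariant, the identity \eqref{3.2}, valid for every $f\in\cA_n(\cY)$, holds in particular for every $f\in\cM$ with all norms computed in $\cA_n(\cY)$; since each $S_n^{*j}f$ again lies in $\cM$ this reads
\[
\sum_{j=0}^{n}(-1)^{j}\bc{n\\ j}\,\|(S_n^{*}|_{\cM})^{j}f\|_{\cM}^{2}=\|f(0)\|_{\cY}^{2}=\|(E|_{\cM})\,f\|_{\cY}^{2},
\]
which is the Stein equality $\Gamma_{n,\widetilde A}[I_{\cM}]=\widetilde C^{*}\widetilde C$ of \eqref{2.9} with $H=I_{\cM}$. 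Hypercontractivity of $\widetilde A$ descends in the same way from that of $S_n^{*}$ on $\cA_n(\cY)$: for $0\le k\le n$ and $f\in\cM$ the form $\langle\Gamma_{k,\widetilde A}[I_{\cM}]f,f\rangle_{\cM}$ equals $\langle\Gamma_{k,S_n^{*}}[I]f,f\rangle_{\cA_n}\ge 0$, because the $S_n^{*j}f$ lie in $\cM$ and the norms coincide. Hence $(\widetilde C,\widetilde A)$ is $n$-isometric, as required. Equivalently, one may invoke Proposition \ref{P:similar}(2) with the strictly positive-definite solution $H=\cG_{n,C,A}$ of \eqref{2.9} supplied by Theorem \ref{T:1.1}(2).

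The main point requiring care is the very first step: converting the hypothesis of exact $n$-observability into the bounded invertibility of $\cO_{n,C,A}$ onto a \emph{closed, isometrically included} subspace of $\cA_n(\cY)$, so that the norm inherited by $\cM$ is the right one to make $(E|_{\cM},S_n^{*}|_{\cM})$ $n$-isometric. Once $\cM$ is known to be a closed subspace with the ambient norm, the rest is a direct transcription of the intertwining \eqref{3.3}, of evaluation at $z=0$, and of the restriction of \eqref{3.2} to $\cM$, with no genuinely new estimate needed.
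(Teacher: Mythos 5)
Your proposal is correct and follows essentially the same route as the paper, which simply asserts that $\cO_{n,C,A}\colon\cX\to\cM:=\operatorname{Ran}\cO_{n,C,A}$ furnishes the required similarity; you have merely written out the direct check (closedness of the range from exact $n$-observability, the intertwining \eqref{3.3}, evaluation at $0$, and the descent of \eqref{3.2} to $\cM$) that the paper leaves to the reader.
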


\begin{proof}  One can simply check directly that $\cO_{n,C,A} \colon 
    \cX \to \cA_{n}(\cY)$ considered as a linear transformation from 
    $\cX$ onto $\cM: = \operatorname{Ran} \cO_{n,C,A}$ provide the 
    required similarity transformation.
\end{proof}
    
As explained by part (4) of Theorem \ref{T:1.2}, for purposes of study of contractively-included, 
$S_n^*$-invariant subspaces of $\cA_n(\cY)$ which satisfy inequality \eqref{3.6}, 
without loss of generality we may suppose at the start that we are working with ${\mathcal X}'$ as 
the original state space ${\mathcal X}$ and with the solution $H$ of inequalities 
\eqref{2.8} to be normalized to $H = I_{{\mathcal X}}$. Then certain simplifications occur in 
parts (1)-(4) of Theorem \ref{T:1.2} as explained in the next result.

\begin{theorem}  \label{T:1.3}
Let $(C, A)$ be an $n$-contractive pair with $C\in\cL(\cX,\cY)$ and $A\in\cL(\cX)$. Then:
\begin{enumerate}
\item $(C,A)$ is $n$-output-stable and the intertwining relation \eqref{3.3} holds. Hence
         $\operatorname{Ran}  \cO_{n,C,A}$ is $S_n^*$-invariant.
\item The operator ${\mathcal O}_{n,C,A}: \cX\to \cA_n(\cY)$ is a contraction. 
         Moreover ${\mathcal O}_{n,C,A}$ is isometric if
         and only if $(C,A)$ is an $n$-isometric pair and $A$ is strongly stable.
\item If the linear manifold ${\mathcal M}:= \operatorname{Ran}
      {\mathcal O}_{n,C,A}$ is given the lifted norm
\begin{equation}  \label{3.12}
\|{\mathcal O}_{n,C,A}x\|_{{\mathcal M}}=\| Q x \|_{{\mathcal X}}
\end{equation}
where $Q$ is the orthogonal projection of ${\mathcal X}$ onto
$(\operatorname{Ker} {\mathcal O}_{n,C,A})^{\perp}$, then 
${\mathcal O}_{n,C,A}$ is 
a coisometry of ${\mathcal X}$ onto ${\mathcal M}$. Moreover, ${\mathcal M}$ is contained 
contractively in $\cA_n(\cY)$ and is isometrically equal to the  reproducing kernel Hilbert space 
${\mathcal H}(K_{n,C,A})$ with reproducing kernel $K_{n,C,A}(z,\zeta)$ given by
$$
K_{n,C,A}(z,\zeta) = C (I - z A)^{-n}(I-\overline{\zeta}A^{*})^{-n}C^{*}.
$$
\item If ${\mathcal M}=\operatorname{Ran}{\mathcal O}_{n,C,A}={\mathcal H}(K_{n,C,A})$ is given the lifted norm 
$\| \cdot \|_{{\mathcal H}(K_{n,C,A})}$ as in \eqref{3.12}, then $S_n^*\vert_{\cM}$ is a contraction and 
\begin{equation}
\sum_{j=0}^n(-1)^j\bcs{k \\ j}\cdot \|S_n^{*j}f\|^2_{{\mathcal H}(K_{n,C,A})}\ge \|f(0)\|^2_\cY
\quad\mbox{for all} \; \; f\in {\mathcal H}(K_{n,C,A}).\label{3.14}
\end{equation}
Moreover, \eqref{3.14} holds with equality if and only the orthogonal projection $Q$ of 
${\mathcal X}$ onto $(\operatorname{Ker} {\mathcal O}_{n,C,A})^{\perp}$ is subject to relations
\begin{equation}  
Q\ge A^*QA\quad \mbox{and}\quad \Gamma_{n,A}[Q]=C^*C.
\label{3.15}
\end{equation}
         In particular, if $(C,A)$ is observable, then
\eqref{3.14} holds with equality if and only if $(C,A)$ is an $n$-isometric pair.
         \end{enumerate}
         \end{theorem}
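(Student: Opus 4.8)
The plan is to obtain Theorem \ref{T:1.3} as the specialization of Theorem \ref{T:1.2} to the normalized metric $H = I_{\cX}$, using Definition \ref{D:2} to translate ``$(C,A)$ is $n$-contractive'' into ``\eqref{2.8} holds with $H = I_{\cX}$''. Since $\operatorname{Ker} I = \{0\}$, the completed state space $\cX'$ of Theorem \ref{T:1.2} coincides with $\cX$ and $(C',A') = (C,A)$, so parts (1) and (2) are immediate: $n$-output-stability together with the intertwining \eqref{3.3} come from Theorem \ref{T:1.1}(1) and Theorem \ref{T:1.2}(1), the contractivity of $\cO_{n,C,A}$ is the bound $\cG_{n,C,A} \le I$ from Proposition \ref{P:2-1.1}(1), and the isometry criterion is Theorem \ref{T:1.2}(2) read at $H = I$, where the relations \eqref{2.9} become exactly $n$-isometry of $(C,A)$.

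For part (3) I would first record that $\cN := \operatorname{Ker}\cO_{n,C,A}$ is $A$-invariant (a consequence of \eqref{3.3}), so that $Q = P_{\cN^{\perp}}$ interacts well with $A$. The lifted norm \eqref{3.12} is precisely the $H = I$ instance of the infimum norm in Theorem \ref{T:1.2}(3), namely the quotient norm of $\cX/\cN$; consequently $\cO_{n,C,A}$ is a partial isometry with initial space $\cN^{\perp}$ and final space $\cM$, i.e.\ a coisometry with $\cO_{n,C,A}^{*}\cO_{n,C,A} = Q$ and $\cO_{n,C,A}\cO_{n,C,A}^{*} = I_{\cM}$. The identification $\cM = \cH(K_{n,C,A})$ and the contractive inclusion in $\cA_n(\cY)$ are then Theorem \ref{T:1.2}(3a) at $H = I$, while completeness of $\cM$ follows because $\cN^{\perp}$ is closed.

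For part (4) the contractivity of $S_n^{*}|_{\cM}$ reduces, via \eqref{3.3} and the coisometry, to the operator inequality $Q \ge A^{*}QA$; this holds since $\|QAx\| \le \|Ax\| \le \|x\|$ for $x \in \cN^{\perp}$ (using that $A$ is a contraction and $A\cN \subseteq \cN$), which is the first relation in \eqref{3.15}. Writing $f = \cO_{n,C,A}x$ with $x = \cO_{n,C,A}^{*}f \in \cN^{\perp}$ and using $S_n^{*j}f = \cO_{n,C,A}A^{j}x$ together with $\|\cO_{n,C,A}w\|_{\cM} = \|Qw\|$, the left-hand side of \eqref{3.14} becomes $\langle \Gamma_{n,A}[Q]x, x\rangle$ and the right-hand side is $\|Cx\|^{2} = \langle C^{*}Cx, x\rangle$; thus \eqref{3.14} is equivalent to the Stein inequality $\Gamma_{n,A}[Q] \ge C^{*}C$, and equality for all $f$ is equivalent to the Stein equality $\Gamma_{n,A}[Q] = C^{*}C$, which together with $Q \ge A^{*}QA$ is \eqref{3.15}. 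The observable case is then formal, since observability forces $\cN = \operatorname{Ker}\cG_{1,C,A} = \operatorname{Ker}\cG_{n,C,A} = \{0\}$, whence $Q = I_{\cX}$ and \eqref{3.15} collapses to $n$-isometry of $(C,A)$.

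The hard part is verifying the Stein inequality $\Gamma_{n,A}[Q] \ge C^{*}C$ itself, i.e.\ that $Q$ solves the system \eqref{2.8}. Passing to the block form of $A$ and $C$ relative to $\cX = \cN^{\perp}\oplus\cN$, one has $C = \begin{bmatrix} C_1 & 0\end{bmatrix}$ and the compression $A_1 = QA|_{\cN^{\perp}}$ satisfies $\cG_{n,C,A} = \begin{bmatrix} \cG_{n,C_1,A_1} & 0 \\ 0 & 0\end{bmatrix}$ with $\cG_{n,C_1,A_1}$ injective and $\le I_{\cN^{\perp}}$, so the claim becomes $\Gamma_{n,A_1}[I_{\cN^{\perp}}] \ge C_1^{*}C_1$. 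Because $\cG_{n,C_1,A_1}$ obeys the Stein \emph{equality} $\Gamma_{n,A_1}[\cG_{n,C_1,A_1}] = C_1^{*}C_1$, this amounts to upgrading that equality for the gramian to the corresponding \emph{inequality} for the identity, which is exactly where $n$-hypercontractivity must enter---through Lemma \ref{squeeze} applied with $H = Q$ (equivalently, to $I_{\cN^{\perp}} - \cG_{n,C_1,A_1}$), showing that the compression $A_1$ inherits $n$-hypercontractivity from $A$. The cross-terms generated by the off-diagonal block $A_{21}$ are precisely what prevents a purely formal specialization of Theorem \ref{T:1.2}(3b), and controlling them via the squeeze lemma is the crux of the argument.
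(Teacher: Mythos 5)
Your first three paragraphs are correct and follow the paper's own route (direct specialization of Theorem \ref{T:1.2} to $H=I_{\cX}$, plus the $Q$-based rewriting of \eqref{3.14}); in particular, your observation that \eqref{3.14} is equivalent to the operator inequality $\Gamma_{n,A}[Q]\ge C^{*}C$ rather than $\Gamma_{n,A}[I_{\cX}]\ge C^{*}C$ is exactly the right bookkeeping, and it exposes a point the paper's proof passes over silently. The problem is your final paragraph. Lemma \ref{squeeze} cannot deliver $\Gamma_{n,A}[Q]\ge C^{*}C$: that lemma takes the positivity of the \emph{top-order} form $\Gamma_{n,A}[H]$ as a hypothesis and returns only the intermediate positivities $\Gamma_{k,A}[H]\ge 0$, so invoking it with $H=Q$ (or, equivalently, with $I_{\cN^{\perp}}-\cG_{n,C_1,A_1}$ and the compression $A_1$) presupposes precisely the inequality you are trying to establish. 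Nor would $n$-hypercontractivity of $A_1$ suffice, since $\Gamma_{n,A_1}[I]\ge C_1^{*}C_1$ is a genuinely stronger statement.

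More seriously, the step cannot be repaired, because $\Gamma_{n,A}[Q]\ge C^{*}C$ is false for general non-observable $n$-contractive pairs. Take $n=2$, $\cX=\bbC^{3}$, $\cY=\bbC$, and (with all entries real positive)
\begin{equation*}
A=\begin{bmatrix}0&0&0\\ a&0&0\\ 0&b&0\end{bmatrix},\qquad C=\begin{bmatrix}c_{1}&c_{2}&0\end{bmatrix},\qquad a^{2}=0.45,\ b^{2}=0.4,\ c_{1}^{2}=0.2,\ c_{2}^{2}=0.05 .
\end{equation*}
Then $A^{*}A=\operatorname{diag}(0.45,0.4,0)\le I$, $\Gamma_{2,A}[I]=\operatorname{diag}(0.28,0.2,1)\ge 0$, and $\Gamma_{2,A}[I]-C^{*}C$ has upper-left block $\left[\begin{smallmatrix}0.08&-0.1\\ -0.1&0.15\end{smallmatrix}\right]$ with determinant $0.002>0$, so $(C,A)$ is $2$-contractive; moreover $CA=\begin{bmatrix}ac_{2}&0&0\end{bmatrix}$, $CA^{2}=0$, whence $\cG_{2,C,A}$ has kernel exactly $\bbC e_{3}$ and $Q=\operatorname{diag}(1,1,0)$. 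But $\langle\Gamma_{2,A}[Q]e_{1},e_{1}\rangle=1-2a^{2}+0=0.1<0.2=\|Ce_{1}\|^{2}$, i.e.\ for $f=\cO_{2,C,A}e_{1}$ the left side of \eqref{3.14} equals $0.1$ while $\|f(0)\|^{2}=0.2$. So the unconditional inequality \eqref{3.14} (and likewise the inequality \eqref{3.6} in Theorem \ref{T:1.2}(3b), whose proof tacitly identifies $\|S_{n}^{*j}f\|_{\cM}^{2}$ with $\|A^{j}x\|^{2}$ instead of $\|QA^{j}x\|^{2}$) requires the additional hypothesis that $(C,A)$ be observable; in that case $Q=I_{\cX}$ and \eqref{3.14} is literally the $n$-contractivity inequality, so there is nothing left to prove. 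You have, in effect, located a gap in the paper rather than filled one.
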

\begin{proof}  Statements (1)--(3) and all but the last part of
statement (4) are direct specializations to the case $H =I_{{\mathcal X}}$ of the 
corresponding results in Theorem \ref{T:1.2}. It remains only to analyze the conditions 
for equality in \eqref{3.14}.

\smallskip

From the intertwining relation \eqref{3.3}, we see that inequality $S_nS_n^*\le I_{\cM}$ and
the equality in \eqref{3.14} for a generic 
element $f = {\mathcal O}_{n,C,A}x \in\cM$ mean that
\begin{align*}
&\|{\mathcal O}_{n,C,A}x\|_{{\mathcal H}(K_{n,C,A})}\ge \|{\mathcal O}_{n,C,A}Ax\|_{{\mathcal H}(K_{n,C,A})},\\
&\sum_{j=0}^n(-1)^j \bcs{k\\ j} \cdot \|{\mathcal O}_{n,C,A}A^jx\|^2_{{\mathcal H}(K_{n,C,A})}=
\|Cx\|^2_{\cY}
\end{align*}
for all $x\in\cX$. By the definition \eqref{3.12} of the ${\mathcal H}(K_{n,C,A})$-norm, the latter relations 
can be written as
$$
\|Qx\|_{\cX}\ge \|QAx\|_{\cX}\quad\mbox{and}\quad
\sum_{j=0}^n(-1)^j \bcs{k\\ j}\cdot \|QA^jx\|^2_{\cX}=\|Cx\|^2_{\cY}
$$
which in turn is finally equivalent to \eqref{3.15}.    
\end{proof}

\begin{remark}[\textbf{Variations on a theme: shift-invariance of the range 
    of an observability operator}] {\em   A fundamental observation for 
    us is the fact that $\operatorname{Ran} \cO_{n,C,A}$ is invariant 
    under the $S_{n}^{*}$ due to the intertwining condition 
    \eqref{3.3}.  However, for the case of a backward-shifted observability operator 
$\Ob_{n,k,C,A}$, we see that $\operatorname{Ran} \Ob_{n,k,C,A}$ is not 
$S_{n}^{*}$-invariant; a partial substitute is the relation
$$
  S_{1}^{*} \Ob_{n,k,C,A}=\Ob_{n,k+1,C,A}A
$$
which follows directly from \eqref{st4}. We now list some additional 
curious connections between the ranges of $S_n^k\Ob_{n,k,C,A}$ and 
the operators $S_n$ and $S_n^*$; we shall not have need of any these in 
the sequel.

\begin{proposition}  \label{P:cutecon}
    If the pair $(C,A)$ is $n$-output stable, then the following 
    relations hold:
\begin{align}
& S_n^*S_n^{k}\Ob_{n,k,C,A}=S_n^{k-1}\Ob_{n,k-1,C,A}
\quad\quad\mbox{for all}\quad k\ge 1.\label{jul22}  \\
& S_n^{*m}S_n^{k}\Ob_{n,k,C,A}=\left\{\begin{array}{cll}
S_n^{k-m}\Ob_{n,k-m,C,A} & \mbox{if} & m<k,\\
\cO_{n,C,A}A^{m-k} & \mbox{if} & m\ge k.
\end{array}\right.
\label{jul23}  \\
& S_n(S_n^*S_n)^{-1}S_n^{k-1}\Ob_{n,k-1,C,A}=S_n^{k}\Ob_{n,k,C,A}
\quad\quad\mbox{for all}\quad k\ge 1.\label{jul22a}
\end{align}
\end{proposition}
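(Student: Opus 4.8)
The plan is to establish \eqref{jul22} by a direct comparison of power-series coefficients, and then to derive both \eqref{jul23} and \eqref{jul22a} from it, so that the only genuine computation sits in the first identity.

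For \eqref{jul22} I would start from the expansion \eqref{defdelta} together with the observation (from \eqref{defmu}) that $\binom{n+j-1}{j}=\mu_{n,j}^{-1}$, which lets me write, for $x\in\cX$ and $k\ge 1$,
$$
S_n^k\Ob_{n,k,C,A}x=\sum_{i\ge k}\mu_{n,i}^{-1}\,(CA^{i-k}x)\,z^i ,
$$
since $S_n$ is multiplication by $z$ and the coefficient of $z^j$ in $\Ob_{n,k,C,A}x$ is $\binom{n+j+k-1}{j+k}CA^jx=\mu_{n,j+k}^{-1}CA^jx$ (reindexing by $i=j+k$). Applying $S_n^*$ through its coefficient action $(S_n^*f)_i=\tfrac{\mu_{n,i+1}}{\mu_{n,i}}f_{i+1}$ recorded in \eqref{3.1}, the weight $\mu_{n,i+1}^{-1}$ cancels against $\tfrac{\mu_{n,i+1}}{\mu_{n,i}}$, so the coefficient of $z^i$ collapses to $\mu_{n,i}^{-1}CA^{i+1-k}x$ for $i\ge k-1$; this is precisely the coefficient of $z^i$ in $S_n^{k-1}\Ob_{n,k-1,C,A}x$, which gives \eqref{jul22}. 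The only thing to watch is the index bookkeeping at the low-order end of the series, but the $\mu$-weight formulation makes the cancellation transparent.

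Identity \eqref{jul23} then follows by iterating \eqref{jul22}: each application of $S_n^*$ drops the power of $S_n$ and the shift index in lockstep, so $m$ applications produce $S_n^{k-m}\Ob_{n,k-m,C,A}$ as long as $m<k$, which is the first case. When $m\ge k$, the first $k$ applications reach $\Ob_{n,0,C,A}=\cO_{n,C,A}$ by \eqref{st2}, and the remaining $m-k$ applications are governed by the intertwining relation \eqref{3.3}, $S_n^*\cO_{n,C,A}=\cO_{n,C,A}A$, whose iterate is $\cO_{n,C,A}A^{m-k}$; this is the second case (the boundary $m=k$ belonging to both). It is here that the standing hypothesis of $n$-output stability enters, since it is what validates \eqref{3.3}.

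For \eqref{jul22a} the conceptual point is that $S_n^*S_n$ is the diagonal operator multiplying the $i$-th Taylor coefficient by $\tfrac{i+1}{n+i}\in[\tfrac1n,1)$, hence positive and bounded below; so $S_n$ is bounded below, $\operatorname{Ran}S_n$ is closed, and $(S_n^*S_n)^{-1}$ is a bounded operator. Consequently $P:=S_n(S_n^*S_n)^{-1}S_n^*$ is self-adjoint and idempotent, i.e.\ the orthogonal projection of $\cA_n(\cY)$ onto $\operatorname{Ran}S_n=\{g\in\cA_n(\cY):g(0)=0\}$. Rewriting the input through \eqref{jul22} as $S_n^{k-1}\Ob_{n,k-1,C,A}=S_n^*\big(S_n^k\Ob_{n,k,C,A}\big)$, the left side of \eqref{jul22a} becomes $P\big(S_n^k\Ob_{n,k,C,A}\big)$; and since $k\ge 1$ each vector $S_n^k\Ob_{n,k,C,A}x$ vanishes at the origin and so lies in $\operatorname{Ran}S_n$, where $P$ acts as the identity, yielding \eqref{jul22a}. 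The hardest step is really just recognizing this projection structure; everything else reduces to the already-proved \eqref{jul22}.
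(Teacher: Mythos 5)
Your proof is correct and follows essentially the same route as the paper: a coefficient-level verification of \eqref{jul22} (the paper phrases it via the expansion of $S_n^*S_n^kR_{n,k}(z)$ rather than the $\mu$-weights, but the cancellation is identical), iteration plus the intertwining relation \eqref{3.3} for \eqref{jul23}, and reduction of \eqref{jul22a} to \eqref{jul22}. The only cosmetic difference is that you package the last step as the statement that $S_n(S_n^*S_n)^{-1}S_n^*$ is the orthogonal projection onto $\operatorname{Ran}S_n$, where the paper simply cancels $(S_n^*S_n)^{-1}S_n^*S_n$ against $S_n^{k-1}$; both amount to the same observation.
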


\begin{proof} To prove \eqref{jul22}, we note that by \eqref{07},
$$
(S_n^{k}R_{n,k})(z)=\sum_{j=0}^\infty \bcs{n+j+k-1\\ j+k}\cdot z^{j+k}
$$
and therefore, we have from \eqref{3.1}
\begin{align*}
S_n^*S_n^{k} R_{n,k}(z) &=\sum_{j=0}^\infty \frac{k+j}{n+k+j-1}\cdot \bc{n+k+j-1\\ j+k}\cdot
z^{k+j-1}\\
&=\sum_{j=0}^\infty \bc{n+k+j-2\\ j+k-1}\cdot z^{k+j-1}=z^{k-1}R_{n,k-1}(z)=S_n^{k-1}
R_{n,k-1}(z)
\end{align*}  
which together with \eqref{defdelta} implies \eqref{jul22}. 

\smallskip

The identity \eqref{jul23} follows recursively from \eqref{jul22} and 
\eqref{3.3}.  Note that the special case $k=0, m=1$ gives us back  \eqref{3.3}.
To verify \eqref{jul22a}, we first note that 
$$
S_n^*S_n: \; \sum_{j=0}^\infty f_jz^j\mapsto \sum_{j=0}^\infty \frac{j+1}{j+n} \cdot f_jz^j
$$
from which it follows that  $S_n^*S_n$ is invertible on $\cA_n(\cY)$ and moreover,
$$
S_n(S_n^*S_n)^{-1}: \; \sum_{j=0}^\infty f_jz^j\mapsto \sum_{j=0}^\infty \frac{j+n}{j+1} 
\cdot f_jz^{j+1}.
$$
We now note that \eqref{jul22a} is an immediate consequence of 
\eqref{jul22}:
\begin{align*}
S_n(S_n^*S_n)^{-1}S_n^{k-1}\Ob_{n,k-1,C,A}&=S_n(S_n^*S_n)^{-1}S_n^*S_n^{k}\Ob_{n,k,C,A}\\
&=S_nS_n^{k-1}\Ob_{n,k,C,A}=S_n^{k}\Ob_{n,k,C,A}.
\end{align*}
\end{proof}

Finally we note that the operator $S_{n}(S_{n}^{*}S_{n})^{-1}$ was 
introduced and studied in the general context of a left-invertible 
Hilbert space operator $S_{n}$ by Shimorin \cite{sh1} where it 
is called the {\em Cauchy dual} of the left invertible operator 
$S_{n}$.  For the latest developments, see \cite{CC} and the 
references there.
} \end{remark}

Just as in the case of unshifted observability operators, it is 
possible to represent the range of a $k$-shifted observability 
operator as a reproducing kernel Hilbert space.

\begin{theorem}  \label{T:frakOkernel}
    The reproducing kernel for the space $S_{n}^{k} 
    \operatorname{Ran}  {\Ob}_{n,k,C,A}$ (with inner product 
    induced by $\cA_{n}(\cY)$) is given by
 \begin{equation}
\boldsymbol{\mathfrak{K}}_k(z,\zeta)=  z^k\overline\zeta^k
CR_{n,k}(zA)\Gr_{n,k,C,A}^{-1}R_{n,k}(\zeta A)^*C^*.
\label{deffrakk}  
\end{equation}
\end{theorem}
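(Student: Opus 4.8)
The plan is to realize the space in question as the range of a single bounded operator and then check the claimed formula directly against the reproducing property. Set
$$
T := S_{n}^{k}\,\Ob_{n,k,C,A} \colon \cX \to \cA_{n}(\cY), \qquad
(Tx)(z) = z^{k}CR_{n,k}(zA)x = \sum_{j\ge 0}\bcs{n+j+k-1 \\ j+k}(CA^{j}x)\,z^{j+k},
$$
which is bounded whenever $(C,A)$ is $n$-output stable, by the boundedness of $\Ob_{n,k,C,A}$ recorded before Proposition \ref{P:wghtSteinid}. Thus $\cM := S_{n}^{k}\operatorname{Ran}\Ob_{n,k,C,A} = \operatorname{Ran} T$, carrying the norm inherited from $\cA_{n}(\cY)$.

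First I would compute the gramian $T^{*}T$. Writing $i=j+k$ and using that the $\cA_{n}(\cY)$-weight $\mu_{n,i}=\binom{n+i-1}{i}^{-1}$ cancels exactly one factor of $\binom{n+i-1}{i}$, one gets
$$
\|Tx\|_{\cA_{n}}^{2} = \sum_{i\ge k}\mu_{n,i}\bcs{n+i-1 \\ i}^{2}\|CA^{i-k}x\|^{2}
= \sum_{j\ge 0}\bcs{n+j+k-1 \\ j+k}\|CA^{j}x\|^{2}
= \langle \Gr_{n,k,C,A}\,x,\,x\rangle_{\cX},
$$
so by polarization $T^{*}T=\Gr_{n,k,C,A}$. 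This is the one genuine computation in the argument, and it is where the shift by $z^{k}$ is essential: it is precisely what turns the $\cA_{n}$-weights into the binomial weights defining $\Gr_{n,k,C,A}$ in \eqref{defR}. Granting that $\Gr_{n,k,C,A}$ is boundedly invertible (which holds, e.g., whenever $(C,A)$ is exactly $n$-observable, since all the summands in the representation $\Gr_{n,k,C,A}=\sum_{\ell=1}^{n}\binom{\ell+k-2}{\ell-1}\cG_{n-\ell+1,C,A}$ are positive semidefinite and the $\ell=1$ term gives $\Gr_{n,k,C,A}\ge\cG_{n,C,A}$), the operator $T$ is bounded below, hence $\cM$ is a \emph{closed} subspace of $\cA_{n}(\cY)$ and so is itself a reproducing kernel Hilbert space with the inherited inner product.

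It then remains to verify that $\boldsymbol{\mathfrak{K}}_{k}$ is that kernel. For fixed $\zeta\in\bbD$ and $y\in\cY$ I set $w := \overline\zeta^{k}\,\Gr_{n,k,C,A}^{-1}R_{n,k}(\zeta A)^{*}C^{*}y\in\cX$ and observe that $\boldsymbol{\mathfrak{K}}_{k}(\cdot,\zeta)y = Tw$; in particular $\boldsymbol{\mathfrak{K}}_{k}(\cdot,\zeta)y\in\cM$, which is exactly what distinguishes the kernel of the subspace $\cM$ from that of the ambient space $\cA_{n}(\cY)$. Then for any $f=Tx\in\cM$, using $T^{*}T=\Gr_{n,k,C,A}$ and the self-adjointness of $\Gr_{n,k,C,A}$,
\begin{align*}
\langle f,\,\boldsymbol{\mathfrak{K}}_{k}(\cdot,\zeta)y\rangle_{\cA_{n}}
&= \langle Tx,\,Tw\rangle_{\cA_{n}}
= \langle \Gr_{n,k,C,A}\,x,\,w\rangle_{\cX}
= \langle x,\,\Gr_{n,k,C,A}\,w\rangle_{\cX} \\
&= \langle x,\,\overline\zeta^{k}R_{n,k}(\zeta A)^{*}C^{*}y\rangle_{\cX}
= \zeta^{k}\langle x,\,R_{n,k}(\zeta A)^{*}C^{*}y\rangle_{\cX},
\end{align*}
the scalar $\overline\zeta^{k}$ emerging from the second slot as its conjugate $\zeta^{k}$. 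The last expression equals $\langle \zeta^{k}CR_{n,k}(\zeta A)x,\,y\rangle_{\cY}=\langle f(\zeta),y\rangle_{\cY}$, which is the reproducing property, so $\boldsymbol{\mathfrak{K}}_{k}$ is indeed the reproducing kernel of $\cM$. The structure is the same as in the direct computation for Theorem \ref{T:1.2}(3a), except that here the induced (rather than lifted) norm is used.

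I expect the main obstacle to be purely bookkeeping: pinning down the gramian identity $T^{*}T=\Gr_{n,k,C,A}$ correctly through the interplay of the $z^{k}$-shift with the $\cA_{n}$-weights, and keeping the scalar conjugations straight so that $\Gr_{n,k,C,A}^{-1}$ and the factor $\overline\zeta^{k}$ land in the right slots. Conceptually the only nontrivial input is that the induced norm, rather than a lifted one, is precisely what forces the \emph{inverse} gramian $\Gr_{n,k,C,A}^{-1}$ into the kernel formula \eqref{deffrakk}.
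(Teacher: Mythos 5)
Your proposal is correct and follows essentially the same route as the paper: the key step in both is the weight computation showing $\|S_{n}^{k}\Ob_{n,k,C,A}x\|_{\cA_{n}(\cY)}^{2}=\langle \Gr_{n,k,C,A}x,x\rangle_{\cX}$, after which the paper simply invokes the general reproducing-kernel principle from the proof of part (3a) of Theorem \ref{T:1.2}, which is exactly the direct verification you write out. Your added remarks --- that invertibility of $\Gr_{n,k,C,A}$ (e.g.\ under exact $n$-observability, via $\Gr_{n,k,C,A}\ge\cG_{n,C,A}$) is what makes the formula and the closedness of the range legitimate --- are a sensible explicit rendering of a hypothesis the paper leaves implicit.
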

    
\begin{proof}
Observe that by \eqref{defdelta}, 
$$
S^k_n\Ob_{n,k,C,A} x=\sum_{j=k}^\infty\bcs{n+j-1 \\ j}(CA^{j-k}x)z^{j}.
$$
It then follows from \eqref{0.1} and \eqref{defR} that
\begin{align}
\|S^k_n\Ob_{n,k,C,A} x\|^2_{\cA_n(\cY)}=&\left\langle \sum_{j=k}^\infty 
\bcs{n+j-1\\ j}(A^*)^{j-k}C^*CA^{j-k}x, 
\, x\right\rangle_{\cX}\notag\\
=&\left\langle \sum_{j=0}^\infty \bcs{n+j+k-1\\ j+k}(A^*)^{j}C^*CA^{j}x, \, x
\right\rangle_{\cX}\notag\\
=&\left\langle \Gr_{n,k,C,A} x, \, x\right\rangle_{\cX}.\label{st10}
\end{align}
Therefore, by the general principle from \cite{NFRKHS} (see
the proof of part (3a) of Theorem \ref{T:1.2} above), it follows that
the reproducing kernel for $S^k_{n} \operatorname{Ran} {\Ob}_{n,k,C,A}$ is given by
\eqref{deffrakk} as asserted.
\end{proof}

With all this model machinery in hand, we are now able to give the 
additional properties concerning $n$-observability and exact 
$n$-observability.

\begin{proposition}  \label{P:exactobs} Let $(C,A)$ be an $n$-output 
    stable pair.
    
 \begin{enumerate}
\item If $(C,A)$ is $n$-observable, then $(C,A)$ is also 
$k$-observable for $1 \le k < n$.
	
\item It can happen that $(C,A)$ is exactly $n$-observable but there 
is a $k$ with $1 \le k < n$ such that $(C,A)$ is not exactly 
$k$-observable.

\item If $(C,A)$ is exactly $n$-observable (so $\Gr_{n,0,C,A} = 
\cG_{n,C,A}$ is strictly positive definite), then $\Gr_{n,k,C,A}$ is 
strictly positive definite for $k=1,2,3,\dots$.
\end{enumerate}
\end{proposition}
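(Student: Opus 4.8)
The plan is to treat the three parts separately, drawing throughout on the gramian series \eqref{2.1}, its shifted analogue \eqref{defR}, and the finite-sum representations recorded just above the statement. For part (1), I would read off the kernel of $\cG_{k,C,A}$ directly from \eqref{2.1}. Since $\cG_{k,C,A}\ge 0$ and
$$
\langle \cG_{k,C,A}\, x,\, x\rangle_\cX=\sum_{j=0}^\infty \bcs{k+j-1\\ j}\,\|CA^jx\|_\cY^2 ,
$$
and since every coefficient $\bcs{k+j-1\\ j}$ is strictly positive for $k\ge 1$, we see that $\cG_{k,C,A}x=0$ precisely when $CA^jx=0$ for all $j\ge 0$. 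This condition is independent of $k$, so $\operatorname{Ker}\cG_{k,C,A}$ is one and the same subspace for every $k\ge 1$. Hence injectivity of $\cG_{n,C,A}$ (that is, $n$-observability) is equivalent to injectivity of $\cG_{k,C,A}$ for each $1\le k<n$, which gives the assertion (in fact the equivalence, not merely the implication).

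For part (3), the efficient route is the finite-sum representation
$$
\Gr_{n,k,C,A}=\sum_{\ell=1}^n \bcs{\ell+k-2\\ \ell-1}\cG_{n-\ell+1,C,A}\qquad(k\ge 1)
$$
recorded above as a consequence of \eqref{07-1} and \eqref{defR}. Because $(C,A)$ is $n$-output stable it is $(n-\ell+1)$-output stable for each $\ell=1,\dots,n$ by Lemma \ref{L:identities}, so every summand $\cG_{n-\ell+1,C,A}$ is a bounded, positive-semidefinite operator, and the coefficients $\bcs{\ell+k-2\\ \ell-1}$ are nonnegative. Isolating the $\ell=1$ term, whose coefficient $\bcs{k-1\\ 0}$ equals $1$, I obtain
$$
\Gr_{n,k,C,A}=\cG_{n,C,A}+\sum_{\ell=2}^n \bcs{\ell+k-2\\ \ell-1}\cG_{n-\ell+1,C,A}\ge \cG_{n,C,A}.
$$
Exact $n$-observability means $\cG_{n,C,A}\ge \varepsilon I_\cX$ for some $\varepsilon>0$, so the displayed inequality forces $\Gr_{n,k,C,A}\ge \varepsilon I_\cX$; boundedness above is immediate from the same finite sum. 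This yields strict positive-definiteness of $\Gr_{n,k,C,A}$ for every $k\ge 1$.

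For part (2), I would exhibit the canonical model as the counterexample. Fix $n\ge 2$, take $\cY=\bbC$, and let $(C,A)=(E,S_n^*)$ on $\cX=\cA_n(\bbC)$. Since $\cO_{n,E,S_n^*}=I$ (computed above using \eqref{E*Snk}), this pair is exactly $n$-observable with $\cG_{n,E,S_n^*}=I$. For $1\le k<n$, the same formula \eqref{E*Snk} gives $\cO_{k,E,S_n^*}f=\sum_{j\ge 0}\bcs{k+j-1\\ j}\mu_{n,j}f_jz^j$ for $f=\sum_j f_jz^j$, so $\cG_{k,E,S_n^*}$ is diagonal in the basis $\{z^j\}$ with $z^j$-eigenvalue (in the $\cA_n$ inner product) equal to $\bcs{k+j-1\\ j}/\bcs{n+j-1\\ j}$. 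As $j\to\infty$ this ratio is asymptotic to $\tfrac{(n-1)!}{(k-1)!}\,j^{\,k-n}\to 0$, so $\cG_{k,E,S_n^*}$ is bounded but not bounded below; that is, $(E,S_n^*)$ is not exactly $k$-observable for any $k<n$.

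The only mildly technical point is the eigenvalue bookkeeping in part (2), but this is routine once \eqref{E*Snk} has put each gramian into diagonal form. Parts (1) and (3) are pure positivity arguments on the series and finite-sum representations already in hand, so I anticipate no genuine obstacle.
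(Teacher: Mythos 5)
Your proposal is correct, and on parts (1) and (3) it takes a genuinely different and arguably cleaner route than the paper. For (1) you identify $\operatorname{Ker}\cG_{k,C,A}=\{x:\ CA^jx=0 \text{ for all } j\ge 0\}$ directly from the strong convergence of the series \eqref{2.1} (legitimate, since $k$-output stability for $k<n$ is guaranteed by Lemma \ref{L:identities}); this shows the kernels of all the gramians coincide, so you actually get the \emph{equivalence} of $k$-observability across $1\le k\le n$, whereas the paper argues more indirectly through the fact that $S_n^*$ has no nontrivial isometric vectors together with the intertwining relation \eqref{3.3} and the Stein identity \eqref{id0}. For (3) you exploit the finite-sum formula $\Gr_{n,k,C,A}=\sum_{\ell=1}^n\binom{\ell+k-2}{\ell-1}\cG_{n-\ell+1,C,A}$ (already recorded in Section 4 after \eqref{st2}) and isolate the $\ell=1$ term to get the operator inequality $\Gr_{n,k,C,A}\ge\cG_{n,C,A}\ge\varepsilon I_\cX$; the paper instead passes through Corollary \ref{C:1.2}, replaces $(C,A)$ by the similar model pair $(E|_{\cM},S_n^*|_{\cM})$, and verifies $\Gr_{n,k,E,S_n^*}\ge I$ via \eqref{jul25}. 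Your argument is shorter and avoids the congruence step; the paper's buys the explicit model computation \eqref{jul25}, which it reuses later (e.g., in Example \ref{E:BL3}). For (2) both you and the paper use the pair $(E,S_n^*)$; the paper computes only $\cG_{n-1,E,S_n^*}=I-S_nS_n^*$ from the Stein identity, while you diagonalize $\cG_{k,E,S_n^*}$ for every $k<n$ with eigenvalues $\binom{k+j-1}{j}/\binom{n+j-1}{j}\to 0$ — your bookkeeping checks out ($\mu_{k,j}\bigl(\binom{k+j-1}{j}\mu_{n,j}\bigr)^2/\mu_{n,j}=\binom{k+j-1}{j}\mu_{n,j}$), and it yields the slightly stronger conclusion that exact $k$-observability fails for \emph{every} $k<n$ in this example.
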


\begin{proof}[Proof of (1):] It is not difficult to check by making use of the formula \eqref{3.1} 
that the adjoint Bergman shift $S_{n}^{*}$ has no nontrivial 
isometric vectors for $n\ge 2$:  {\em if $n \ge 2$ and $f \in 
\cA_{n}(\cY)$ satisfies $\|S_{n}^{*} f \| = \| f \|$, then 
$f=0$.}  Making use of the intertwining relation \eqref{3.3} and the 
weighted Stein identity \eqref{id0}, one can show:  {\em for $n \ge 
2$, if $(C,A)$ is $n$-observable, then $(C,A)$ is also 
$(n-1)$-observable.}  Then a simple induction argument leads to 
statement (1) in the proposition.

\smallskip
\noindent 
\emph{Proof of (2):} We let $E \colon \cA_{n}(\cY) \to \cY$ be the 
evaluation-at-zero map $E \colon f \mapsto f(0)$.
The pair $(E,S_n^*)$ is $n$-observable since the kernel of $\cO_{n,E,S_n^*}$ is trivial and 
it is exactly $n$-observable since  $\cG_{n,E,S_n^*}=I_{\cA_n(\cY)}$ is strictly positive 
definite. However, we have from \eqref{id0}
$$
\cG_{n-1,E,S_n^*}=\cG_{n,E,S_n^*}- S_n\cG_{n,E,S_n^*}S_n^*=I-S_nS_n^*
$$
so that, according to \eqref{3.1}, $\cG_{n-1,E,S_n^*}: \; z^j\mapsto \frac{n-1}{n+j-1}z^j$.
Thus the $(n-1)$-gramian $\cG_{n-1,E,S_n^*}$ is not strictly positive definite so that the 
pair $(E,S_n^*)$ is not $(n-1)$-exactly observable.

\smallskip
\noindent
\emph{Proof of (3):}  If $(C,A)$ is exactly $n$-observable, then 
Corollary \ref{C:1.2} tells us that $(C,A)$ is similar to a model 
pair $(\widetilde C, \widetilde A): = (E|_{\cM},S_{n}^{*}|_{\cM})$ for an $S_{n}^{*}$-invariant 
subspace $\cM \subset \cA_{n}(\cY)$.  If $(C,A)$ and $(\widetilde C, 
\widetilde A)$ are similar output pairs, then it is easily verified 
that the associated gramians $\Gr_{n,k,C,A} = R_{n,k}(B_{A})[C^{*}C]$ 
and   $\Gr_{n,k,\widetilde C,\widetilde A} = R_{n,k}(B_{\widetilde 
A})[\widetilde C^{*} \widetilde C]$  are congruent.  Thus to show 
that $\Gr_{n,k,C,A}$ is strictly positive definite, it suffices to 
show that $\Gr_{n,k, \widetilde C, \widetilde A}$ is strictly positive 
definite. For the model pair $(\widetilde C, \widetilde A)$ one can 
check that
$$
  \Gr_{n,k,\widetilde C, \widetilde A} =
  P_{\cM} \Gr_{n,k,E, S_{n}^{*}}|_{\cM}.
$$
Thus strict positive-definiteness of $\Gr_{n,k,\widetilde C, 
\widetilde A}$ follows from strict positive-definiteness of 
$\Gr_{n,k,E, S_{n}^{*}}$.  
Observe that in view of \eqref{defdelta} and \eqref{E*Snk}, we have
\begin{equation}
\Ob_{n,k,E,S_n^*}f=\sum_{j=0}^\infty  \bcs{j+n+k-1\\ j+k} \left(E
S_{n}^{*j}f\right) z^{j}= \sum_{j=0}^{\infty} \frac{\mu_{n,j}}{\mu_{n,j+k}}f_{j} z^{j}.
\label{jul24}
\end{equation}
Therefore,
$$
S_n^k\Ob_{n,k,E,S_n^*}f=\sum_{j=0}^{\infty} \frac{\mu_{n,j}}{\mu_{n,j+k}}f_{j} z^{j+k}
$$
and by computation \eqref{st10}, we have
$$
\left\langle \Gr_{n,k,E,S_n^*}f, \, g\right\rangle
=\left\langle S_n^k\Ob_{n,k,E,S_n^*}f, \, S_n^k\Ob_{n,k,E,S_n^*}g\right\rangle
=\sum_{j=0}^\infty \frac{\mu_{n,j}^2}{\mu_{n,j+k}}\left\langle f_{j}, \, g_{j}\right\rangle
$$
from which we conclude that
\begin{equation}
 \Gr_{n,k,E,S_n^*}: \; \sum_{j=0}^\infty f_jz^j \to \sum_{j=0}^\infty
\frac{\mu_{n,j}}{\mu_{n,j+k}}f_{j} z^{j}.
\label{jul25}
\end{equation}
As $\mu_{n,j}$ is decreasing in $j$, we see that $\frac{ 
\mu_{n,j}}{\mu_{n,j+k}} \ge 1$ and hence $\Gr_{n,k,E,S_{n}^{*}} \ge 
I_{\cA_{n}(\cY)}$ is strictly positive definite.
\end{proof}

\section{Functions $\Theta_{n,k}$ and metric constraints } 
\label{S:metric}

In this section we take a closer look at the transfer functions $\Theta_{n,k}$ introduced by the 
realization formula \eqref{m5}. As was mentioned in Section 3, for an $n$-output stable pair $(C,A)$,
the associated backward-shifted observability operators $\Ob_{n,k,C,A}$ are bounded for all $k\ge 0$   
as operators from $\cX$ into $\cA_n(\cY)$. In this case, the multiplication operator $M_{\Theta_{n,k}}$
given (according to \eqref{m5}) by
\begin{equation}
M_{\Theta_{n,k}}=\bcs{k+n-1\\ k}D_k+ S_n \Ob_{n,k+1,C,A}B_k: \; 
\cU_k\to \cA_{n}(\cY)
\label{jul16}
\end{equation}
is also bounded. Therefore, the output function $\widehat{y}$ in \eqref{y-hat},
\begin{equation}
\widehat y(z)=\cO_{n,C,A}x+\sum_{k=0}^{N}z^k \Theta_{n,k}(z)u_k
\label{jul12}
\end{equation}
belongs to $\cA_n(\cY)$ for every choice of $x\in\cX$ and 
$u_k\in\cU_k$ for each $N=1,2, \dots$. We next impose some
additional metric relations on $\left[ \begin{smallmatrix} A & B_{k} 
\\ C & D_{k} \end{smallmatrix} \right]$, specifically one or more of 
the relations
\begin{align}
A^*\Gr_{n,k+1,C,A}B_k+\bcs{n+k-1 \\ k}\cdot C^*D_k&=0,\label{jul13}\\
B_k^*\Gr_{n,k+1,C,A}B_k+\bcs{n+k-1 \\ k}\cdot D_k^*D_k&\le I_{\cU_k},\label{jul14}\\
B_k^*\Gr_{n,k+1,C,A}B_k+\bcs{n+k-1 \\ k}\cdot D_k^*D_k&= I_{\cU_k},\label{jul14a}
\end{align}
and show how these lead to boundedness and orthogonality properties 
for the associated multiplication operator $M_{\Theta_{k}}$.
Due to equality \eqref{7.3}, it turns out that relations \eqref{jul13} and \eqref{jul14} are equivalent
to the matrix inequality 
\begin{equation}   \label{contr}
  \begin{bmatrix} A^{*} & C^{*} \\ B_{k}^{*} & D_{k}^{*}
  \end{bmatrix} \begin{bmatrix} \Gr_{n,k+1,C,A} & 0 \\ 0 & \bcs{n+k-1\\ k}\cdot  I_{\cY}
\end{bmatrix}   \begin{bmatrix} A & B_{k} \\ C & D_{k} \end{bmatrix}  
 \le \begin{bmatrix} \Gr_{n,k,C,A} & 0 \\ 0 & I_{\cU_{k}} \end{bmatrix},
    \end{equation}
while the equalities \eqref{jul13} and \eqref{jul14a} are equivalent to the matrix equality
\begin{equation}   \label{isom}
  \begin{bmatrix} A^{*} & C^{*} \\ B_{k}^{*} & D_{k}^{*}
  \end{bmatrix} \begin{bmatrix} \Gr_{n,k+1,C,A} & 0 \\ 0 & \bcs{n+k-1\\ k}\cdot  I_{\cY}
\end{bmatrix}   \begin{bmatrix} A & B_{k} \\ C & D_{k} \end{bmatrix}
 = \begin{bmatrix} \Gr_{n,k,C,A} & 0 \\ 0 & I_{\cU_{k}} \end{bmatrix}.
    \end{equation}
The two latter conditions are of metric nature; they express the 
contractivity or isometric property of the  colligation operator ${\bf U}_k =
\left[ \begin{smallmatrix} A & B_k \\ C & D_k \end{smallmatrix} \right]$ 
with respect to certain weights. Note that in the classical case $n=1$, 
the formula \eqref{jul18} below amounts to the following well-known fact:  
if the colligation operator ${\bf U} =
\left[ \begin{smallmatrix} A & B \\ C & D \end{smallmatrix} \right]$
is isometric and if we set $\Theta(z) = D + z C (I - zA)^{-1} B$,
then
\begin{equation}  \label{dBRdual}
   \frac{ I - \Theta(\zeta)^* \Theta(z)}{ 1 - z \bar{\zeta}} = B^* (I
   - \bar{\zeta}A^*)^{-1} (I - zA)^{-1}B.
\end{equation}

\begin{lemma}   \label{L:5.1}
Let $(C,A)$ be an $n$-output stable pair and let $\Theta_{n,k}$ be defined
as in \eqref{m5}  for some integer $k\ge 0$ and operators $B_k\in \cL(\cU_k, \cX)$ and $D_k\in \cL(\cU_k, 
\cY)$. 

\smallskip

$(1)$  If equality \eqref{jul13} holds, then 
\begin{itemize}
\item[(a)] $\cO_{n,C,A}x$ is orthogonal to $S_n^k\Theta_{n,k}u$ for all $x\in\cX$ and $u\in\cU_k$.
\item[(b)] $S_n^k\Theta_{n,k}u$ is orthogonal to  $S_n^m\Theta_{n,k}u^\prime$ for all $m>k$ and 
$u,u^\prime\in\cU_k$.
\end{itemize}

$(2)$ Moreover:
\begin{itemize}
    \item [(a)] If inequality \eqref{jul14} holds, then the 
    operator $S_{n}^{k} M_{\Theta_{n,k}}$ is a contraction from 
    $\cU_{k}$ into $\cA_{n}(\cY)$.
    
    \item[(b)] If both \eqref{jul13} and \eqref{jul14} hold, i.e., if 
    \eqref{contr} holds, then the operator 
    $S_{n}^{k}M_{\Theta_{n,k}}$ is a contraction from 
    $H^{2}(\cU_{k})$ into $\cA_{n}(\cY)$.
    \end{itemize}
    
\smallskip

$(3)$ Similarly:
\begin{itemize}
    \item[(a)]  If equality \eqref{jul14a} holds, then the operator 
    $S_{n}^{k} M_{\Theta_{n,k}}$ is an isometry from $\cU_{k}$ into 
    $\cA_{n}(\cY)$.
    
    \item[(b)]  If \eqref{jul13} and \eqref{jul14a} hold, i.e., if 
    \eqref{isom} holds, then 
%$S_{n}^{k} M_{\Theta_{n,k}}$ 
    \begin{equation}   \label{SThetank-isom}
	\| S_{n}^{k} \Theta_{n,k}f \|^{2}_{\cA_{n}(\cY)} =
    \| f \|^{2}_{H^{2}(\cU_{k})} - \sum_{j=1}^{\infty } \| (I - 
    S_{n}^{*}S_{n})^{1/2} S_{n}^{k} \Theta_{n,k} S_{1}^{*j} f \|^{2}
    \end{equation}
for every $f\in H^2(\cU_k)$.

    \item[(c)] If \eqref{isom} holds, 
%(i.e., \eqref{jul13} and \eqref{jul14a} (or equivalently \eqref{isom}) hold, 
then 
\begin{align}
\frac{I_{\cU_{k}}}{\mu_{n,k}} - \Theta_{n,k}(z)^*
   \Theta_{n,k}(\zeta) = & \mu_{n,k}B_k^*R_{n,k}(z A)^*\Gr_{n,k+1,C,A}R_{n,k}(\zeta A)B_k  \label{jul18}\\
&- z \overline{\zeta} \mu_{n,k}  B_k^*R_{n,k+1}(z A)^*\Gr_{n,k,C,A}R_{n,k+1}(\zeta A)B_k.\notag
\end{align}
\end{itemize}
\end{lemma}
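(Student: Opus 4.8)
The plan is to handle the three parts of the lemma in the order (1), (2), (3), because the orthogonality relations of part (1) are exactly what make the norm computations of parts (2) and (3) collapse. The one structural fact I would isolate at the outset is the splitting coming from \eqref{jul16}: for $u\in\cU_k$,
$$S_n^k M_{\Theta_{n,k}}u=\binom{n+k-1}{k}z^kD_ku+S_n^{k+1}\Ob_{n,k+1,C,A}B_ku,$$
in which the first summand is supported in degree exactly $k$ and the second in degrees $\ge k+1$, so the two are orthogonal in $\cA_n(\cY)$. This decomposition drives everything.

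For part (1) I would expand each inner product as a power series using the weights in \eqref{0.1} and the series \eqref{defR} for $\Gr_{n,k+1,C,A}$. Since $\mu_{n,j}\binom{n+j-1}{j}=1$, a term-by-term computation of $\langle\cO_{n,C,A}x,\,S_n^k\Theta_{n,k}u\rangle$ telescopes to $\langle(\binom{n+k-1}{k}D_k^*C+B_k^*\Gr_{n,k+1,C,A}A)A^kx,\,u\rangle$, which vanishes by \eqref{jul13}, giving (1a); the same bookkeeping applied to $\langle S_n^k\Theta_{n,k}u,\,S_n^m\Theta_{n,k}u'\rangle$ with $m>k$ reduces to $\langle(\binom{n+k-1}{k}D_k^*C+B_k^*\Gr_{n,k+1,C,A}A)A^{m-k-1}B_ku,\,u'\rangle=0$, again by \eqref{jul13}, giving (1b). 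The only care needed is tracking which binomial coefficient survives after absorbing the weight $\mu_{n,j}$. Parts (2a) and (3a) then follow by squaring the splitting: the cross term is absent by degree orthogonality, and $\|S_n^{k+1}\Ob_{n,k+1,C,A}B_ku\|^2=\langle\Gr_{n,k+1,C,A}B_ku,\,B_ku\rangle$ by \eqref{st10}, so $\|S_n^k\Theta_{n,k}u\|^2=\langle(\binom{n+k-1}{k}D_k^*D_k+B_k^*\Gr_{n,k+1,C,A}B_k)u,\,u\rangle$, which is $\le\|u\|^2$ under \eqref{jul14} and $=\|u\|^2$ under \eqref{jul14a}.

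The passage to $H^2(\cU_k)$ in (2b) and (3b) is the heart of the matter, and I would carry it out by a telescoping recursion. Writing $T=S_n^kM_{\Theta_{n,k}}$ and using $f=f(0)+zS_1^*f$ in $H^2(\cU_k)$, I get $Tf=Tf(0)+S_n\,T(S_1^*f)$; the cross term $\langle Tf(0),\,S_nT(S_1^*f)\rangle$ vanishes termwise by (1b), since every summand of $S_nT(S_1^*f)$ has the form $S_n^m\Theta_{n,k}(\,\cdot\,)$ with $m>k$. Combining this with the elementary identity $\|S_ng\|^2=\|g\|^2-\|(I-S_n^*S_n)^{1/2}g\|^2$ on $\cA_n(\cY)$ (valid because $S_n^*S_n\le I$) yields the one-step relation $\|Tf\|^2=\|f(0)\|^2+\|T(S_1^*f)\|^2-\|(I-S_n^*S_n)^{1/2}T(S_1^*f)\|^2$ in the isometric case. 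Iterating on $S_1^{*j}f$ and sending the number of steps to infinity produces \eqref{SThetank-isom}; the contraction claim (2b) comes from the same recursion with inequalities, run first on polynomials (where it terminates) and then extended by density. The main obstacle here is justifying the limit: one must verify $\|TS_1^{*(N+1)}f\|\to0$, which I would obtain from the contractivity of $T$ established in (2b) (recall \eqref{isom} implies \eqref{contr}) together with $S_1^{*(N+1)}f\to0$ in $H^2(\cU_k)$.

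Finally, for (3c) I would expand $\tfrac{1}{\mu_{n,k}}I_{\cU_k}-\Theta_{n,k}(z)^*\Theta_{n,k}(\zeta)$ directly from $\Theta_{n,k}(z)=\binom{n+k-1}{k}D_k+zCR_{n,k+1}(zA)B_k$. Substituting the two relations packaged in \eqref{isom}, namely $\binom{n+k-1}{k}C^*D_k=-A^*\Gr_{n,k+1,C,A}B_k$ and $\binom{n+k-1}{k}D_k^*D_k=I-B_k^*\Gr_{n,k+1,C,A}B_k$, eliminates every occurrence of $D_k$ and rewrites the whole expression through $B_k$, $\Gr_{n,k+1,C,A}$, and $R_{n,k+1}$. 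I would then reproduce the first term of the claimed right-hand side by running the resolvent recursion \eqref{st4}, $R_{n,k}(zA)=\binom{n+k-1}{k}I+zAR_{n,k+1}(zA)$, inside it; matching term by term leaves a single discrepancy of the form $B_k^*R_{n,k+1}(zA)^*\big(\mu_{n,k}\Gr_{n,k,C,A}-\mu_{n,k}A^*\Gr_{n,k+1,C,A}A-C^*C\big)R_{n,k+1}(\zeta A)B_k$, and the middle factor is zero precisely by the weighted Stein identity \eqref{7.3}. This part is the most computation-heavy but wholly mechanical once \eqref{st4} and \eqref{7.3} are invoked; the only delicacy is keeping the conjugation convention in $\Theta_{n,k}(z)^*$ consistent with that built into the adjoint resolvents $R_{n,k}(zA)^*$.
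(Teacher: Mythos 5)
Your proposal is correct and follows essentially the same route as the paper's proof: term-by-term inner-product computations reducing to the adjoint of \eqref{jul13} for part (1), the norm identity via \eqref{st10} for (2a)/(3a), the telescoping $f=f(0)+zS_1^*f$ recursion combined with $\|S_ng\|^2=\|g\|^2-\|(I-S_n^*S_n)^{1/2}g\|^2$ for (2b)/(3b), and a direct expansion using \eqref{isom}, \eqref{st4}, and the weighted Stein identity \eqref{7.3} for (3c). Your explicit justification of the limit $\|TS_1^{*(N+1)}f\|\to 0$ for general $f\in H^2(\cU_k)$ is a slightly more careful rendering of the paper's ``let $m\to\infty$'' step, but it is not a different argument.
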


\begin{proof}[Proof of (1):] We first observe the power series expansion
\begin{equation}
\Theta_{n,k}(z)=\mu_{n,k}^{-1}D_k+\sum_{j=0}^\infty\mu_{n,j+k+1}^{-1}\cdot
CA^jB_kz^{j+1}.
\label{jul15}
\end{equation}
which is an immediate consequence of formulas \eqref{m5} and \eqref{defdelta}. We then make use of 
expansions \eqref{0.9}, \eqref{jul15} and the definition of the inner product in $\cA_n(\cY)$
to get 
\begin{align*}
&\left\langle S_n^{k}\Theta_{n,k} u, \, \cO_{n,C,A}x\right\rangle_{\cA_n(\cY)}=
\mu_{n,k}\cdot\left\langle \mu_{n,k}^{-1}D_ku, \, \mu_{n,k}^{-1}CA^{k}x\right\rangle_{\cY}  \\
&\qquad+\sum_{j=0}^\infty\mu_{n,j+k+1}\cdot \left\langle
\mu_{n,j+k+1}^{-1}C A^{j} B_k u, \, \mu_{n,j+k+1}^{-1}CA^{j+k+1} x\right\rangle_{\cY}  \\
&=\left\langle \left(\bcs{n+k-1 \\ k} C^*D_k+ A^*\left(
\sum_{j=0}^\infty \bcs{n+j+k \\ j+k+1} A^{*j}C^*CA^{j}\right)B_k\right)u,\,
A^kx\right\rangle_{\cX}  \\
&=\left\langle \left(\bcs{n+k-1 \\ k}C^*D_k+
A^*\Gr_{n,k+1,C,A}B_k\right)u,\, A^kx\right\rangle_{\cX} \text{ (by 
\eqref{defR})} \\
& =0  \text{ (by \eqref{jul13}).}
\end{align*}
which proves part (a).  Verification of part $(b)$ is quite similar: for $m>k$ we have
\begin{align*}
&\left\langle S_n^{m}\Theta_{n,k} u^\prime,\, S_n^{k}\Theta_{n,k}u\right\rangle_{\cA_n(\cY)}=
\mu_{n,m}\cdot\left\langle  \mu_{n,k}^{-1}D_ku^\prime, \; 
\mu_{n,m}^{-1}CA^{m-k-1}B_k u\right\rangle_{\cY}  \\
&\qquad+\sum_{j=0}^\infty\mu_{n,j+m+1}\cdot \left\langle
\mu_{n,j+k+1}^{-1}C A^{j} B_k u^\prime, \, 
\mu_{n,j+m+1}^{-1}CA^{j+m-k-1}B_k u\right\rangle_{\cY}  \\
&=\left\langle \left(\bcs{n+k-1 \\ k} C^*D_k+A^*\left(
\sum_{j=0}^\infty \bcs{n+j+k \\ j+k+1}A^{*j}C^*CA^{j}\right)B_k
\right)u^\prime,\, A^{m-k-1}B_ku\right\rangle_{\cX}  \\
&=\left\langle \left(\bcs{n+k-1 \\ k} C^*D_k+A^*\Gr_{n,k+1,C,A}B_k\right)
u^\prime, \,  A^{m-k-1}B_ku\right\rangle_{\cX} =0.
\end{align*}  
\end{proof}

\begin{proof}[Proof of (2):] According to \eqref{jul14},
\begin{equation}
\bcs{k+n-1\\ k}\cdot \| D_ku\|^2_{\cY}+\left\langle\Gr_{n,k+1,C,A}B_ku, \; B_ku\right\rangle_{\cX}\le 
\|u\|^2_{\cU_k}
\label{jul20}
\end{equation}
for all $u\in\cU_k$. 
We now may make use of \eqref{st10} to get
\begin{align}
\| S_n^{k}\Theta_{n,k} u\|^2_{\cA_n(\cY)}&=
\|(\bcs{k+n-1\\ k}S_n^k D_k+ S^{k+1}_n \Ob_{n,k+1,C,A}B_k)u\|^2_{\cA_n(\cY)}\notag\\
&=\|\bcs{k+n-1\\ k}S_n^k D_ku\|^2_{\cA_n(\cY)}+\|S^{k+1}_n \Ob_{n,k+1,C,A}B_ku\|^2_{\cA_n(\cY)}\notag\\
&=\bcs{k+n-1\\ k}\| D_ku\|^2_{\cY}+\left\langle\Gr_{n,k+1,C,A}B_ku, \; B_ku\right\rangle_{\cX}
\le \|u\|^2_{\cU_k}.\label{jul19}
\end{align}
Thus, $S_n^k M_{\Theta_{n,k}}$  is a contraction from $\cU_k$ to 
$\cA_n(\cY)$ and part (a) follows.
To prove part (b), following ideas from \cite{olaa}, under the 
assumption that both \eqref{jul13} and \eqref{jul14} hold, 
  we shall show that for any
$\cU_k$-valued polynomial  $f(z)={\displaystyle \sum_{j=0}^m f_jz^j}$,
\begin{equation}
\|S_n^{k}\Theta_{n,k} f\|^2_{\cA_n(\cY)}\le
\|f\|_{H^2(\cU_k)}^2=\sum_{j=0}^m\|f_j\|_{\cU_k}^2.
\label{8.3}
\end{equation}
Let $S_1^*$ be the operator of backward shift on $H^2(\cU_k)$ so that for the
polynomial $f$ as above, $(S_1^*f)(z)={\displaystyle \sum_{j=0}^{m-1} f_{j+1}z^j}$. By
statements (1b) and (3a) of the lemma, we have
\begin{align}
\left\|S_n^{k}\Theta_{n,k} f \right\|^2=&\left\|\sum_{j=0}^mS_n^{k+j}\Theta_{n,k}
f_j\right\|^2\notag\\
=&\left\|S_n^{k}\Theta_{n,k}f_0 \right\|^2+ 
\left\|\sum_{j=1}^mS_n^{k+j}\Theta_{n,k}f_j \right\|^2   \text{ (by (1b))} \notag\\
\le &\left\|f_0 \right\|^2+ 
\left\|S_n^{k+1}\sum_{j=0}^{m-1}S_n^{j}\Theta_{n,k}f_{j+1}\right\|^2\notag\\
=&\|f_0\|^2+\|S_n^{k+1}\Theta_{n,k} S_1^*f\|^2\notag\\
=&\left\|f_0 \right\|^2+ 
\left\|S_n^{k}\Theta_{n,k} S_1^*f \right\|^2- \left\|(I-S_n^*S_n)^{\frac{1}{2}}S_n^{k}
\Theta_{n,k}S_1^*f\right\|^2.\label{8.4}
\end{align}
Replacing $f$ by $S_1^{*j}f$ in \eqref{8.4} gives
\begin{align} 
\|S_n^{k}\Theta_{n,k} S_1^{*j}f\|^2\le \|f_j\|^2&+\|S_{n}^k\Theta_{n,k}
(S_1^{*})^{j+1}f\|^2\notag\\
&-\|(I-S_n^*S_n)^{\frac{1}{2}}S_n^{k}\Theta_{n,k} (S_1^*)^{j+1}f\|^2
 \label{8.3'}
\end{align}
for $j=1,\ldots,m$.  Iteration of the inequality \eqref{8.4} using 
\eqref{8.3'} then gives
\begin{align}
\|S_n^{k}\Theta_{n,k} f\|^2_{\cA_n(\cY)}
&\le \sum_{j=0}^m\|f_j\|_{\cU_k}^2-\sum_{j=1}^m\|(I-S_n^*S_n)^{\frac{1}{2}}S_n^{k}\Theta_{n,k}
S_1^{*j}f\|^2 \label{8.3iterate}\\
&\le \sum_{j=0}^{m} \| f_{j}\|^{2}_{\cU_{k}}.\notag
\end{align}
Letting $m \to \infty$ in \eqref{8.3iterate} now implies the validity 
of \eqref{8.3} for every $f\in H^2(\cU_k)$ and the proof of part 
(b) of (2) is now complete. 
\end{proof}

\begin{proof}[Proof of (3):]  In case \eqref{jul14a} holds,  then 
    \eqref{jul20} holds with equality and part (a) of (3) follows.  If also \eqref{jul18} holds, 
    then \eqref{8.4} holds with equality as well in \eqref{8.3}, \eqref{8.3'}, and 
    \eqref{8.3iterate}.  Part (b) of (3) now follows by letting $m 
    \to \infty$ in \eqref{8.3iterate}.
    
\smallskip

    It remains to verify the formula \eqref{jul18} under assumption 
    \eqref{isom}.  The identity \eqref{isom} is equivalent to the 
    collection of identities
    \begin{align} 
       &   A^{*} \Gr_{n,k+1,C,A} A + \frac{C^{*}C}{\mu_{n,k}} = 
	\Gr_{n,k,C,A}, \quad 
	 A^{*} \Gr_{n,k+1,C,A} B_{k} + \frac{C^{*}D_{k}}{\mu_{n,k}} 
	= 0,  \notag  \\  
     &  B_{k}^{*} \Gr_{n,k+1,C,A} B_{k} + 
	\frac{D_{k}^{*}D_{k}}{\mu_{n,k}} = I_{\cU_{k}}.
	\label{relations'}
\end{align}
We use these relations to compute
\begin{align*}
   & \frac{I_{\cU_{k}}}{\mu_{n,k}} - \Theta_{n,k}(z)^{*} 
    \Theta_{n,k}(\zeta)  =  \\
   &  
  \frac{I_{\cU_{k}}}{\mu_{n,k}} -
    \left[ \frac{D_{k}^{*}}{\mu_{n,k}} + \overline{z} B_{k}^{*} 
    R_{n,k+1}(zA)^{*} C^{*} \right]
    \left[ \frac{D_{k}}{\mu_{n,k}} + \zeta C R_{n,k+1}(\zeta A) B_{k} 
    \right]  \\
    &   = 
    \frac{I_{\cU_{k}}}{\mu_{n,k}} - \frac{D_{k}^{*} 
    D_{k}}{\mu^{2}_{n,k}} - \overline{z} B_{k}^{*} R_{n,k+1}(zA)^{*} 
    \frac{C^{*}D_{k}}{\mu_{n,k}} -\zeta \frac{D_{k}^{*}C}{\mu_{n,k}}
    R_{n,k+1}(\zeta A) B_{k} \\
    & \quad  - \overline{z} \zeta B_{k}^{*} R_{n,k+1}(zA)^{*} 
    C^{*}C R_{n,k+1}(\zeta A) B_{k}  \\
    &  = \frac{1}{\mu_{n,k}} B_{k}^{*} \Gr_{n,k+1,C,A} B_{k} + 
    \overline{z} B_{k}^{*} R_{n,k+1}(zA)^{*} \Gr_{n,k+1,C,A} B_{k} \\
    & \quad \
     + \zeta B_{k}^{*} \Gr_{n,k+1,C,A} A R_{n,k+1}(\zeta A) B_{k}  \\
     & \quad  
    - \mu_{n,k} \overline{z} \zeta B_{k}^{*} R_{n,k+1}(zA)^{*}
    \left(\Gr_{n,k,c,A} - A^{*} \Gr_{n,k+1,C,A}A \right) R_{n,k+1}(\zeta A) B_{k}
    \end{align*}
    where we made use of \eqref{relations'} in the last step.
  By making use next of relation \eqref{st4} we can continue the 
  computation as
  \begin{align*}
   & \frac{I_{\cU_{k}}}{\mu_{n,k}} - \Theta_{n,k}(z)^{*} 
    \Theta_{n,k}(\zeta)  \\  
   &=  \frac{1}{\mu_{n,k}} B_{k}^{*} \Gr_{n,k+1,C,A} B_{k} + B_{k}^{*}
   \left( R_{n,k}(zA)^{*} - \frac{I}{\mu_{n,k}} \right) 
   \Gr_{n,k+1,C,A} B_{k}  \\
   & \quad + B_{k}^{*} \Gr_{n,k+1,C,A} \left( 
   R_{n,k}(\zeta A) - \frac{I}{\mu_{n,k}} \right) B_{k} \\
   & \quad + \mu_{n,k} B_{k}^{*} \left( R_{n,k}(zA)^{*} - 
   \frac{I}{\mu_{n,k}} \right) \Gr_{n,k+1,C,A} \left( R_{n,k}(\zeta 
   A) - \frac{I}{\mu_{n,k}} \right) B_{k}  \\
   & \quad - \mu_{n,k} \overline{z} \zeta B_{k}^{*} R_{n,k+1}(zA)^{*} 
   \cG_{n,k,C,A} R_{n,k+1}(\zeta A) B_{k} \\
   & = \mu_{n,k} B_{k}^{*} R_{n,k}(zA)^{*} \Gr_{n,k+1,C,A} R_{n,k}(\zeta 
   A) B_{k} \\
   &\quad- \mu_{n,k} \overline{z} \zeta B_{k}^{*} 
   R_{n,k+1}(zA)^{*} \cG_{n,k,C,A} R_{n,k+1}(\zeta A) B_{k}
   \end{align*}
   verifying formula \eqref{jul18}.
\end{proof}

The following result is an immediate consequence of Lemma \ref{L:5.1}.

\begin{corollary}   \label{C:5.3}
Let us assume that the pair $(C,A)$ is $n$-output stable and that relations \eqref{jul13}, \eqref{jul14} hold 
for all $k\ge 0$. Then the representation \eqref{jul12} of the 
function $\widehat{y}$ is orthogonal in the metric 
of $\cA_n(\cY)$ and 
\begin{equation}
\|\widehat y\|^2_{\cA_n(\cY)}=\|\cO_{n,C,A}x\|^2+\sum_{k=0}^\infty 
\|\Theta_{n,k}u_k\|^2\le 
\|\cG_{n,C,A}^{\frac{1}{2}}x\|^2_{\cX}+\sum_{k=0}^\infty \|u_k\|^2_{\cU_k}.
\label{jul17}
\end{equation}
If relations \eqref{jul14} hold with equalities for all $k\ge 0$, then equality holds in \eqref{jul17}.
\end{corollary}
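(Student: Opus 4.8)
The plan is to show that the representation \eqref{jul12} exhibits $\widehat y$ as an orthogonal sum in $\cA_n(\cY)$ and then to read off \eqref{jul17} by a Parseval computation. The summands are $\cO_{n,C,A}x$ together with the pulse responses $S_n^k\Theta_{n,k}u_k$, $k\ge 0$. That $\cO_{n,C,A}x$ is orthogonal to each $S_n^k\Theta_{n,k}u_k$ is exactly Lemma \ref{L:5.1}(1a). The one point not literally covered by Lemma \ref{L:5.1} is the mutual orthogonality of two \emph{distinct} pulse responses $S_n^k\Theta_{n,k}u_k$ and $S_n^m\Theta_{n,m}u_m$ with, say, $k<m$: part (1b) of the lemma addresses only shifts $S_n^{k}\Theta_{n,k}$ and $S_n^{m}\Theta_{n,k}$ of a single transfer function. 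I would establish this cross-orthogonality by the computation used in the proof of (1b): insert the power-series expansion \eqref{jul15} for both $\Theta_{n,k}$ and $\Theta_{n,m}$, use the $\cA_n(\cY)$ inner product to pair off matching powers of $z$ (only degrees $\ge m$ contribute), recognize the resulting operator sum as $\Gr_{n,m+1,C,A}$ via \eqref{defR}, and collapse everything to a multiple of $\langle(\binom{n+m-1}{m}C^*D_m+A^*\Gr_{n,m+1,C,A}B_m)u_m,\,A^{m-k-1}B_ku_k\rangle$, which vanishes by relation \eqref{jul13} evaluated \emph{at the larger index} $m$.

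With orthogonality in hand, for each partial sum $\widehat y_N=\cO_{n,C,A}x+\sum_{k=0}^{N}S_n^k\Theta_{n,k}u_k$ the Pythagorean identity gives
\begin{equation*}
\|\widehat y_N\|^2_{\cA_n(\cY)}=\|\cO_{n,C,A}x\|^2+\sum_{k=0}^{N}\|S_n^k\Theta_{n,k}u_k\|^2,
\end{equation*}
where the $k$-th term is precisely the norm of the $k$-th summand appearing in \eqref{jul17}. Here $\|\cO_{n,C,A}x\|^2=\langle\cG_{n,C,A}x,x\rangle_{\cX}=\|\cG_{n,C,A}^{1/2}x\|^2_{\cX}$, while \eqref{jul19} from the proof of Lemma \ref{L:5.1}(2a) evaluates each remaining term as $\binom{k+n-1}{k}\|D_ku_k\|^2_\cY+\langle\Gr_{n,k+1,C,A}B_ku_k,B_ku_k\rangle_{\cX}$, which is $\le\|u_k\|^2_{\cU_k}$ by \eqref{jul14}. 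Summing these bounds shows the partial sums are uniformly bounded; since the summands are mutually orthogonal, $\{\widehat y_N\}$ is Cauchy in $\cA_n(\cY)$ whenever $\sum_k\|u_k\|^2_{\cU_k}<\infty$, so the series converges and the displayed identity passes to the limit, yielding both the equality and the inequality in \eqref{jul17}.

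For the final assertion, if \eqref{jul14} holds with equality for every $k$ (that is, \eqref{jul14a} holds), then Lemma \ref{L:5.1}(3a) says each $S_n^k M_{\Theta_{n,k}}$ is an isometry from $\cU_k$ into $\cA_n(\cY)$, so $\|S_n^k\Theta_{n,k}u_k\|^2=\|u_k\|^2_{\cU_k}$ for all $k$; combined with $\|\cO_{n,C,A}x\|^2=\|\cG_{n,C,A}^{1/2}x\|^2$ this turns the inequality in \eqref{jul17} into an equality. I expect the main obstacle to be the cross-index orthogonality of the pulse responses, which lies just outside the literal statement of Lemma \ref{L:5.1}(1b) and forces one to invoke \eqref{jul13} at the larger of the two time indices; the only other point needing care is the convergence of the infinite series and the justification for exchanging the norm with the (now infinite) sum.
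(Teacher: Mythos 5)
Your proposal is correct, and it follows the route the paper intends: the paper offers no written proof at all, merely asserting that the corollary is ``an immediate consequence of Lemma \ref{L:5.1}.'' The one substantive point you raise is a real one: the orthogonality actually needed in \eqref{jul12} is between $S_n^k\Theta_{n,k}u_k$ and $S_n^m\Theta_{n,m}u_m$ for $k<m$, whereas part (1b) of Lemma \ref{L:5.1} literally asserts only the orthogonality of $S_n^k\Theta_{n,k}u$ to $S_n^m\Theta_{n,k}u'$, i.e.\ two shifts of the \emph{same} transfer function (which is the version used in the proof of part (2b) of the lemma, not the version needed here). Your repair is exactly right: repeating the coefficient-pairing computation from the proof of (1b) with $\Theta_{n,m}$ in the higher-degree slot, the inner product collapses to $\bigl\langle \bigl(\binom{n+m-1}{m}C^*D_m+A^*\Gr_{n,m+1,C,A}B_m\bigr)u_m,\,A^{m-k-1}B_ku_k\bigr\rangle$, which vanishes by \eqref{jul13} at the larger index $m$ (and the standing hypothesis that \eqref{jul13} holds for \emph{all} $k\ge 0$ is what makes this legitimate). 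The remaining steps --- Pythagoras for the partial sums, the identification $\|\cO_{n,C,A}x\|^2=\|\cG_{n,C,A}^{1/2}x\|^2$, the termwise bound from \eqref{jul19} and \eqref{jul14}, the limit passage via uniform boundedness of orthogonal partial sums, and the equality case via Lemma \ref{L:5.1}(3a) --- are all as the authors surely intended. In short, your write-up supplies a small but genuine missing lemma-level statement that the paper's ``immediate consequence'' phrasing glosses over.
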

Observe that in case the pair $(C,A)$ is exactly $n$-observable (so that $\Gr_{n,k,C,A}$ is strictly positive 
definite for all $k\ge 0$ by Proposition \ref{P:exactobs}), the inequality \eqref{contr} can 
equivalently be expressed as $\| \Xi \| \le 1$ where $\Xi$ is the 
operator given by
\begin{equation}   \label{pr7}
 \Xi: = \begin{bmatrix} \Gr_{n,k+1,C,A}^{1/2} & 0 \\ 0 
 & \frac{I_{\cY}}{\mu_{n,k}} \end{bmatrix}
 \begin{bmatrix} A & B_{k} \\ C & D_{k} \end{bmatrix}
     \begin{bmatrix} \Gr_{n,k,C,A}^{-1/2} & 0 \\ 0 & 
	 I_{\cU_{k}} \end{bmatrix} \colon \begin{bmatrix} \cX \\ 
	 \cU_{k} \end{bmatrix} \to \begin{bmatrix} \cX \\ \cY 
     \end{bmatrix}.
 \end{equation}
Another equivalent condition is that $\| \Xi^{*} \| \le 1$ which in 
turn can be expressed as 
\begin{equation}   \label{contra}
    \begin{bmatrix} A & B_{k} \\ C & D_{k} \end{bmatrix}
        \begin{bmatrix} \Gr_{n,k,C,A}^{-1} & 0 \\ 0 & I_{\cU_k} \end{bmatrix}
        \begin{bmatrix} A^{*} & C^{*} \\ B_{k}^{*} & D_{k}^{*}
        \end{bmatrix}\le  \begin{bmatrix} \Gr_{n,k+1,C,A}^{-1} & 0 \\ 0 &
        \mu_{n,k} I_{\cY} \end{bmatrix}.
\end{equation}
 Note that equality \eqref{isom} means that the operator $\Xi$ is isometric.
Of  particular interest  is the case where $\Xi$ is coisometric, i.e., where 
the colligation operator ${\bf U}_k= \left[ \begin{smallmatrix} A & B_k \\ C & D_k \end{smallmatrix} \right]$
is coisometric with respect to the weights indicated below:
\begin{equation}   \label{wghtcoisom}
    \begin{bmatrix} A & B_{k} \\ C & D_{k} \end{bmatrix}
        \begin{bmatrix} \Gr_{n,k,C,A}^{-1} & 0 \\ 0 & I_{\cU_k} \end{bmatrix}
        \begin{bmatrix} A^{*} & C^{*} \\ B_{k}^{*} & D_{k}^{*}
        \end{bmatrix} = \begin{bmatrix} \Gr_{n,k+1,C,A}^{-1} & 0 \\ 0 &
        \mu_{n,k} I_{\cY} \end{bmatrix}.
\end{equation}

We can now derive the shifted weighted Bergman-space analogue of the 
familiar identity for the de Branges-Rovnyak kernel which is a dual 
version  
of \eqref{dBRdual}:  if $\bU =
\left[ \begin{smallmatrix} A & B \\ C & D \end{smallmatrix} \right]$
is coisometric and if we set $\Theta(z) = D + z C (I - zA)^{-1} B$,
then 
$$
   \frac{ I - \Theta(z) \Theta(\zeta)^{*}}{ 1 - z \bar{\zeta}} = C (I
   - zA)^{-1} (I - \bar{\zeta}A^{*})^{-1} C^{*}.
$$

\begin{lemma}   \label{L:frakcoisom}
Let $(C,A)$ be an exactly $n$-observable $n$-output stable pair and let $\Theta_{n,k}$ be defined
as in \eqref{m5}  for some operators $B_k\in \cL(\cU_k, \cX)$ and $D_k\in \cL(\cU_k)$ subject to 
equality \eqref{wghtcoisom}. Then
\begin{align}
\frac{I_{\cY}}{\mu_{n,k}} - \Theta_{n,k}(z)
   \Theta_{n,k}(\zeta)^{*} = &
CR_{n,k}(zA)\Gr_{n,k,C,A}^{-1}R_{n,k}(\zeta A)^*C^*,\notag \\
&- z \overline{\zeta} \cdot CR_{n,k+1}(zA)\Gr_{n,k+1,C,A}^{-1}R_{n,k+1}(\zeta A)^*C^*.
    \label{frakkerid}
\end{align}
\end{lemma}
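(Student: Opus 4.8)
The plan is to carry out, for the coisometric normalization \eqref{wghtcoisom}, the natural analogue of the computation that produced the isometric kernel identity \eqref{jul18} in the proof of Lemma \ref{L:5.1}. First I would unpack \eqref{wghtcoisom} by multiplying out the block product on its left-hand side and equating entries with the right-hand side; since $(C,A)$ is exactly $n$-observable, Proposition \ref{P:exactobs} guarantees that $\Gr_{n,k,C,A}$ and $\Gr_{n,k+1,C,A}$ are strictly positive definite, so the inverses occurring in \eqref{wghtcoisom} and in the target \eqref{frakkerid} are legitimate. Equating blocks yields
$$A\Gr_{n,k,C,A}^{-1}A^*+B_kB_k^*=\Gr_{n,k+1,C,A}^{-1},\quad A\Gr_{n,k,C,A}^{-1}C^*+B_kD_k^*=0,\quad C\Gr_{n,k,C,A}^{-1}C^*+D_kD_k^*=\mu_{n,k}I_{\cY},$$
together with the adjoint $D_kB_k^*=-C\Gr_{n,k,C,A}^{-1}A^*$ of the second relation.

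Next I would insert the realized form $\Theta_{n,k}(z)=\mu_{n,k}^{-1}D_k+zCR_{n,k+1}(zA)B_k$ (using $\binom{n+k-1}{k}=\mu_{n,k}^{-1}$ from \eqref{defmu} in \eqref{m5}) and expand the product $\Theta_{n,k}(z)\Theta_{n,k}(\zeta)^*$ into four pieces governed respectively by $D_kD_k^*$, $D_kB_k^*$, $B_kD_k^*$, and $B_kB_k^*$. Applying the three coisometry relations above converts the $D_kD_k^*$ piece, after subtraction from $I_{\cY}/\mu_{n,k}$, into $\mu_{n,k}^{-2}C\Gr_{n,k,C,A}^{-1}C^*$; it turns the two mixed pieces into terms built from $C\Gr_{n,k,C,A}^{-1}A^*R_{n,k+1}(\zeta A)^*C^*$ and its $z$-side transpose; and it turns the $B_kB_k^*$ piece into $z\overline{\zeta}\,CR_{n,k+1}(zA)\bigl(\Gr_{n,k+1,C,A}^{-1}-A\Gr_{n,k,C,A}^{-1}A^*\bigr)R_{n,k+1}(\zeta A)^*C^*$.

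The device that collapses all of this is the resolvent recursion \eqref{st4}, applied in the two guises $zR_{n,k+1}(zA)A=R_{n,k}(zA)-\mu_{n,k}^{-1}I_{\cX}$ and $\overline{\zeta}A^*R_{n,k+1}(\zeta A)^*=R_{n,k}(\zeta A)^*-\mu_{n,k}^{-1}I_{\cX}$; here one uses that $A$ commutes with the power series $R_{n,k+1}(zA)$ to move the flanking $A$ through the resolvent. Each such substitution trades a shifted resolvent sitting next to an $A$ for the unshifted $R_{n,k}$ minus a scalar multiple of the identity, spawning a collection of $\mu_{n,k}^{-1}$- and $\mu_{n,k}^{-2}$-weighted correction terms. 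I expect this bookkeeping to be the only real difficulty, and the point worth double-checking: when the terms are gathered, all the $\mu_{n,k}^{-2}C\Gr_{n,k,C,A}^{-1}C^*$ contributions cancel, the $\mu_{n,k}^{-1}CR_{n,k}(zA)\Gr_{n,k,C,A}^{-1}C^*$ contributions cancel, the $\mu_{n,k}^{-1}C\Gr_{n,k,C,A}^{-1}R_{n,k}(\zeta A)^*C^*$ contributions cancel, and what survives is exactly $CR_{n,k}(zA)\Gr_{n,k,C,A}^{-1}R_{n,k}(\zeta A)^*C^*-z\overline{\zeta}\,CR_{n,k+1}(zA)\Gr_{n,k+1,C,A}^{-1}R_{n,k+1}(\zeta A)^*C^*$, which is \eqref{frakkerid}. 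The structural reason to expect this cancellation is that \eqref{wghtcoisom} is precisely the statement that the weighted colligation $\Xi$ of \eqref{pr7} is coisometric, so that \eqref{frakkerid} is the natural coisometric companion of the isometric identity \eqref{jul18}, with the output operator $C$ now playing the role that the input operator $B_k$ played there and with the gramians entering through their inverses.
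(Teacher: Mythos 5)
Your proposal is correct and follows essentially the same route as the paper's proof: extract the three block relations from \eqref{wghtcoisom}, expand $\Theta_{n,k}(z)\Theta_{n,k}(\zeta)^*$ via the realization \eqref{m5}, substitute, and collapse the shifted resolvents flanked by $A$ using \eqref{st4}. The only cosmetic difference is that the paper organizes the final step by factoring the surviving terms as $C\bigl(\mu_{n,k}^{-1}I+zR_{n,k+1}(zA)A\bigr)\Gr_{n,k,C,A}^{-1}\bigl(\mu_{n,k}^{-1}I+\overline{\zeta}A^*R_{n,k+1}(\zeta A)^*\bigr)C^*$ before invoking \eqref{st4}, rather than tracking the individual cancellations of the $\mu_{n,k}^{-1}$- and $\mu_{n,k}^{-2}$-weighted corrections as you describe.
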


\begin{proof}The proof parallels the verification of the identity 
    \eqref{jul18} done above.  The weighted-coisometry condition \eqref{wghtcoisom}
    gives us the set of equations
 \begin{align}
     A \Gr_{n,k,C,A}^{-1} A^{*} + B_{k} B_{k}^{*} = \Gr_{n,k+1,C,A}^{-1}, & \quad C
     \Gr_{n,k,C,A}^{-1} A^{*} + D_{k} B_{k}^{*} = 0  \notag \\
     C \Gr_{n,k,C,A}^{-1} C^{*} + D_{k} D_{k}^{*} = \mu_{n,k} I_{\cY}.
     \label{relations1}
 \end{align}
We then  compute:
 \begin{align*}
    & \frac{I_{\cY}}{\mu_{n,k}} - \Theta_{n,k}(z)
     \Theta_{n,k}(\zeta)^{*} \\
     & =     \frac{I_\cY}{\mu_{n,k}} - \left[ \frac{D_{k}}{\mu_{n,k}} + z
     CR_{n,k+1}(zA) B_{k} \right] \left[ \frac{D_{k}^{*}}{\mu_{n,k}} +
     \bar{\zeta} B_{k}^{*} R_{n,k+1}(\zeta A)^*C^*\right] \\
     & = \frac{I_{\cY}}{\mu_{n,k}} - \frac{D_{k}
     D_{k}^{*}}{\mu_{n,k}^{2}} - z CR_{n,k+1}(zA)
     \frac{B_{k}D_{k}^{*}}{\mu_{n,k}} - \bar{\zeta}   
     \frac{D_{k}B_{k}^{*}}{\mu_{n,k}} R_{n,k+1}(\zeta A)^*C^*  \\
     & \quad -  z \bar{\zeta} \cdot CR_{n,k+1}(zA)B_{k}B_{k}^{*}
     R_{n,k+1}(\zeta A)^*C^* \text{ (by \eqref{relations1})}      \\
     & =  \frac{C}{\mu_{n,k}}
     \Gr_{n,k,C,A}^{-1} \frac{C^{*}}{\mu_{n,k}} + z CR_{n,k+1}(zA) A
     \Gr_{n,k,C,A}^{-1} \frac{C^{*}}{\mu_{n,k}} \\
 & \quad + \bar{\zeta}
     \frac{C}{\mu_{n,k}} \Gr_{n,k,C,A}^{-1} A^{*} R_{n,k+1}(\zeta A)^*C^* \\
     & \quad        - z \bar{\zeta} \cdot CR_{n,k+1}(zA)\left[ \Gr_{n,k+1,C,A}^{-1} - A
\Gr_{n,k,C,A}^{-1}  A^{*}\right] R_{n,k+1}(\zeta A)^*C^*\\
&=C\left(\frac{I_{\cX}}{\mu_{n,k}}+zR_{n,k+1}(zA)A\right)\Gr_{n,k,C,A}^{-1}
\left(\frac{I_{\cX}}{\mu_{n,k}}+\overline{\zeta}A^*R_{n,k+1}(\zeta A)^*\right)C^* \\
&\quad- z \bar{\zeta} \cdot CR_{n,k+1}(zA)\Gr_{n,k+1,C,A}^{-1} R_{n,k+1}(\zeta A)^*C^*\\
&=CR_{n,k}(zA)\Gr_{n,k,C,A}^{-1}R_{n,k}(\zeta A)^*C^*\notag \\
&\quad - z \overline{\zeta} \cdot CR_{n,k+1}(zA)\Gr_{n,k+1,C,A}^{-1}R_{n,k+1}(\zeta A)^*C^*
\text{ (by \eqref{st4}).}
     \end{align*}
\end{proof}

\begin{remark}   \label{R:defect}
    {\em More generally, if $\Theta_{n,k}(z)$ is given by \eqref{m5}
    and if we do not assume the weighted coisometry condition
    \eqref{wghtcoisom}, then the decomposition \eqref{frakkerid}    
    holds in the more general form
    \begin{align*}
     &  \frac{I_{\cY}}{\mu_{n,k}} - \Theta_{n,k}(z)
        \Theta_{n,k}(\zeta)^{*} =
CR_{n,k}(zA)\Gr_{n,k,C,A}^{-1}R_{n,k}(\zeta A)^*C^* \\
     & \quad - z  \overline{\zeta} \cdot
CR_{n,k+1}(zA)\Gr_{n,k+1,C,A}^{-1}R_{n,k+1}(\zeta A)^*C^*
 + \Xi_{k}(z, \zeta)
    \end{align*}
    where the defect kernel $\Xi_{k}(z, \zeta)$ is given by
  \begin{align*}
       \Xi_{k}(z, \zeta) = &\begin{bmatrix} z CR_{n,k}(zA) &
      \frac{I_{\cY}}{\mu_{n,k}} \end{bmatrix}
\left( \begin{bmatrix} \Gr_{n,k+1}^{-1} & 0 \\ 0 &
      \mu_{n,k} I_{\cY} \end{bmatrix}\right. \\
      & \left. 
%\cdot \left( \begin{bmatrix} \Gr_{n,k+1}^{-1} & 0 \\ 0 &\mu_{n,k} I_{\cY} \end{bmatrix}
    - \begin{bmatrix} A & B_{k} \\ C & D_{k} \end{bmatrix}
  \begin{bmatrix} \Gr_{n,k}^{-1} & 0 \\ 0 &  I_{\cY} \end{bmatrix}
      \begin{bmatrix} A^{*} & C^{*} \\ B_{k}^{*} & D_{k}^{*}   
      \end{bmatrix} \right) \cdot  \begin{bmatrix} \overline{\zeta}
   R_{n,k}(\zeta A)^*C^* \\ \frac{I_{\cY}}{\mu_{n,k}}
  \end{bmatrix}. 
  \end{align*}
     }\end{remark}
Since equality \eqref{wghtcoisom} implies inequality \eqref{contr}, it follows that under assumption of Lemma 
\ref{L:frakcoisom}, all the conclusions of parts (1) and (2) in Lemma \ref{L:5.1} are true. To have 
all conclusions true, we need the operator \eqref{pr7} to be unitary. 
\begin{lemma}   \label{L:5.6}
    Suppose that we are given an integer $k \ge 0$ and an exactly $n$-observable $n$-output-stable pair
    $(C,A)$ with $A\in\cL(\cX)$ and $C\in\cL(\cX,\cY)$ . Then there exist operators  $B_k\in \cL(\cU_k, \cX)$ 
and $D_k\in \cL(\cU_k, \cY)$ such that equalities \eqref{wghtcoisom} and \eqref{contr} hold.
 Explicitly, such  $B_{k}$ and $C_{k}$ are essentially uniquely
 determined by solving the Cholesky factorization problem:
  \begin{equation}  \label{pr6}
\begin{bmatrix}B_k \\ D_k\end{bmatrix}\begin{bmatrix}B_k^* & D_k^*\end{bmatrix}=
\begin{bmatrix}\Gr_{n,k+1,C,A}^{-1} & 0 \\ 0 & \mu_{n,k} I_{\cY}\end{bmatrix}-
\begin{bmatrix}A \\ C\end{bmatrix}\Gr_{n,k,C,A}^{-1}\begin{bmatrix}A^* & C^*\end{bmatrix}
\end{equation}
subject to the additional constraint that the coefficient space
$\cU_{k}$ be chosen so that $\left[ \begin{smallmatrix} B_{k} \\
D_{k} \end{smallmatrix} \right] \colon \cU_{k} \to \cX \oplus \cY$ is
injective.
\end{lemma}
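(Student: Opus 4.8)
The plan is to read \eqref{pr6} as a Cholesky (positive square-root) factorization problem: solving it for $\bbm{B_k \\ D_k}$ is possible precisely when its right-hand side
$$
M:=\bbm{\Gr_{n,k+1,C,A}^{-1} & 0 \\ 0 & \mu_{n,k} I_{\cY}}-\bbm{A \\ C}\Gr_{n,k,C,A}^{-1}\bbm{A^* & C^*}
$$
is positive semidefinite, and once $B_k,D_k$ are produced by such a factorization, rearranging \eqref{pr6} shows it is equivalent to the weighted coisometry equality \eqref{wghtcoisom}. Since $(C,A)$ is exactly $n$-observable, Proposition \ref{P:exactobs} guarantees that $\Gr_{n,j,C,A}$ is strictly positive definite for every $j\ge 0$, so all the inverses and square roots appearing above (and in \eqref{pr6}, \eqref{wghtcoisom}) are well defined and bounded. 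Thus the whole lemma reduces to proving $M\ge 0$, which is exactly inequality \eqref{contra}.

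To establish $M\ge 0$ I would use the weighted Stein identity \eqref{7.3}, rewritten in block form as
$$
\bbm{A^* & C^*}\bbm{\Gr_{n,k+1,C,A} & 0 \\ 0 & \mu_{n,k}^{-1}I_{\cY}}\bbm{A \\ C}=\Gr_{n,k,C,A}.
$$
Writing $W:=\bbm{\Gr_{n,k+1,C,A} & 0 \\ 0 & \mu_{n,k}^{-1}I_{\cY}}$, this says exactly that $V:=W^{1/2}\bbm{A \\ C}\Gr_{n,k,C,A}^{-1/2}$ is an isometry of $\cX$ into $\cX\oplus\cY$. Every isometry satisfies $VV^*\le I$; writing this out gives $W^{1/2}\bbm{A \\ C}\Gr_{n,k,C,A}^{-1}\bbm{A^* & C^*}W^{1/2}\le I$, and conjugating by $W^{-1/2}$ yields $\bbm{A \\ C}\Gr_{n,k,C,A}^{-1}\bbm{A^* & C^*}\le W^{-1}$, which is $M\ge 0$. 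Abstractly this is the elementary fact that $X^*PX=Q$ with $P,Q>0$ forces $XQ^{-1}X^*\le P^{-1}$.

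With $M\ge 0$ secured, I would finish as follows. Set $\cU_k:=\overline{\operatorname{Ran}M^{1/2}}\subseteq\cX\oplus\cY$ and take $\bbm{B_k \\ D_k}$ to be $M^{1/2}$ regarded as an operator from $\cU_k$ into $\cX\oplus\cY$; since $\ker M^{1/2}=\cU_k^{\perp}$, this factor is injective, it solves \eqref{pr6}, and the standard uniqueness of positive factorizations with injective factor gives essential uniqueness of $(B_k,D_k)$ up to a unitary identification of $\cU_k$. As noted above, \eqref{pr6} is equivalent to \eqref{wghtcoisom}, which is precisely the statement that the operator $\Xi$ of \eqref{pr7} is a coisometry; hence $\|\Xi\|=\|\Xi^*\|\le 1$, which is the inequality \eqref{contr}. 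The one genuinely non-formal step is the passage from the forward-gramian identity \eqref{7.3} to the inverse-gramian inequality $M\ge 0$; the isometry-to-coisometry argument is what makes it go through, and the main point requiring care is tracking the weights $W$ and the scalar $\mu_{n,k}$ correctly across the inversions.
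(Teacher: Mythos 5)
Your proposal is correct and follows essentially the same route as the paper: both arguments use the weighted Stein identity \eqref{7.3} together with strict positivity of the shifted gramians (from exact $n$-observability) to show that the weighted column $\sbm{\Gr_{n,k+1,C,A}^{1/2}A\Gr_{n,k,C,A}^{-1/2} \\ \mu_{n,k}^{-1/2}C\Gr_{n,k,C,A}^{-1/2}}$ is an isometry, and then the positivity of the right-hand side of \eqref{pr6} (equivalently, the existence of a coisometric extension of that isometry) yields $B_k$ and $D_k$, with injectivity of $\sbm{B_k \\ D_k}$ pinning them down up to a unitary change of $\cU_k$. The only difference is presentational: the paper phrases the Cholesky step as extending the isometry to a coisometry in the weighted coordinates, while you conjugate by $W^{-1/2}$ and factor the defect operator $M$ directly.
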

\begin{proof}
By Proposition \ref{P:wghtSteinid}, the weighted Stein
identity \eqref{7.3} holds for each $k \ge 1$.
Since $(C,A)$ is exactly observable, the gramian $\Gr_{n,k,C,A}$ is strictly
positive definite and then it follows from \eqref{7.3} that the operator
$$
\begin{bmatrix}\Gr_{n,k+1,C,A}^{\frac{1}{2}}A\Gr_{n,k,C,A}^{-\frac{1}{2}} \\
\mu_{n,k}^{-\frac{1}{2}}C\Gr_{n,k,C,A}^{-\frac{1}{2}}\end{bmatrix} \colon  \cX\to \cX\oplus \cY
$$
is an isometry.  By extending this operator to a coisometric  operator \eqref{pr7}
we arrive at $B_k$ and $D_k$ solving \eqref{pr6}.  Further, extension
of this operator to a unitary amounts to the additional restriction  
that $\left[ \begin{smallmatrix} B_{k} \\ D_{k} \end{smallmatrix}    
\right]$ be injective.  
\end{proof}

\section{Beurling-Lax theorem for $\cA_{n}$}  \label{S:BL}

We now present our four approaches to Beurling-Lax representations 
for the Bergman space setting; these were sketched for the classical 
case in the Introduction above.
\smallskip

We start with a general observation. If the subspace $\cM \subset \cA_n(\cY)$ is $S_n$-invariant, 
then $\cM^{\perp}$ is $S_n^*$-invariant; hence we may apply Theorem \ref{T:1.2} part
    (3) (with $\cM^{\perp}$ in place of $\cM$ viewed as sitting
    isometrically inside $\cA_{n}(\cY)$) to conclude that there is an
    $n$-isometric pair $(C,A)$ so that $\cM^{\perp} =
    \operatorname{Ran} \cO_{n, C,A}$; in fact, as indicated in
    Theorem \ref{T:1.2}, we may take $(C,A)$ to be the
    model output pair $(C,A) = (E|_{\cM^{\perp}},
    S_{n}^{*}|_{\cM^{\perp}})$, and $\cO_{n,C,A}$ amounts to
    the inclusion map of $\cM^{\perp}$ into $\cA_{n}(\cY)$. Since $\|{\cO}_{n,C,A}x\|_{\cA_n(\cY)}^2=
\langle \cG_{n,C,A}x, \, x\rangle_{\cX}$ for every $x\in\cX$, it follows that 
$\cM^{\perp}$ is a reproducing kernel Hilbert space  with reproducing kernel 
$$
k_{\cM^\perp}(z, \zeta) = C(I - zA)^{-n} \cG_{n,C,A}^{-1} (I -
    \overline{\zeta} A^{*})^{-n} C^{*}.
$$
It then follows that $\cM = \left( \cM^{\perp} \right)^{\perp}$ has reproducing kernel
\begin{equation}   
\label{kM}
k_{\cM}(z, \zeta) = \frac{I_{\cY}}{(1 - z \overline{\zeta})^{n}} -
C (I - zA)^{-n} \cG^{-1}_{n,C,A} (I - \overline{\zeta}A^{*})^{-n}C^{*}.
    \end{equation}

\subsection{The first approach: partially isometric multipliers} 
Suppose that we are given a shift-invariant subspace $\cM$ contained 
in the Hardy space $H^{2}(\cY)$.  As we just observed in the previous 
paragraph, then $\cM^{\perp}$ is 
backward-shift-invariant and hence can be represented as the range 
$\operatorname{Ran} \cO_{C,A}$ of an observable output pair $(C,A)$; 
we can even take $(C,A)$ to be the model output pair:  $C \colon f \mapsto f(0)$ and $A$ equal to 
the backward 
shift restricted to $\cX=\cM^{\perp}$. 
From the identity
\begin{equation}
{\mathcal G}_{1,C,A}-A^*{\mathcal G}_{1,C,A}A=C^*C\label{6.7}
\end{equation}
(where $\cG_{1,C,A}$ is the identity operator in the model case),
we see that the column matrix $\left[ \begin{smallmatrix}
\cG_{1,C,A}^{1/2} A \cG^{-1/2}_{1,C,A} \\ C \cG_{1,
C,A}^{-1/2} \end{smallmatrix} \right]$ is isometric.  Hence there
exist  operators $\Bo_{1} \colon \widetilde \cU_{1} \to \cX$ ($=
\cM^{\perp}$) and $\Do_{1} \colon \widetilde \cU_{1} \to \cY$ (where
$\widetilde \cU_{1}$ is an appropriate coefficient Hilbert space) so   
that the operator matrix
\begin{equation}   \label{unitary1}
\begin{bmatrix}{\mathcal G}_{1,C,A}^{\frac{1}{2}}A{\mathcal G}_{1,C,A}^{-\frac{1}{2}} &
{\mathcal  G}_{1,C,A}^{\frac{1}{2}}\Bo_1 \\ C{\mathcal G}_{1,C,A}^{-\frac{1}{2}} & 
\Do_1\end{bmatrix}:
\; \begin{bmatrix}\cX \\ \cU\end{bmatrix}\to \begin{bmatrix}\cX \\ \cY\end{bmatrix}
\end{equation}
is unitary. We then set
\begin{equation}
\To_1(z)=\Do_1+zC(I-zA)^{-1}\Bo_1.   
\label{6.8}
\end{equation}
Making use of the unitary property of the matrix \eqref{unitary1}, we
deduce that $\To_{0}$ is inner in the sense that its boundary
values exist almost everywhere on ${\mathbb T}$ and are coisometric,
and that the kernel identity
\begin{equation}
\frac{I_\cY-\To_1(z)\To_1(\zeta)^*}{1-z\overline{\zeta}}=
C(I-zA)^{-1}{\mathcal G}_{1,C,A}^{-1}(I-\overline{\zeta}A^*)^{-1}C^*   
\label{6.9}
\end{equation}
holds. For the case where $n=1$, the latter identity implies the factorization 
of the kernel \eqref{kM}
\begin{equation}
k_{\cM}(z,\zeta)=\frac{\To_1(z)\To_1(\zeta)^*}{1-z\overline{\zeta}}.
\label{kM1}
\end{equation}
Standard reproducing-kernel arguments (details appear in the proof of 
Theorem \ref{T:BL} below for the more general representation 
\eqref{sumker}) then imply that $\cM = 
\To_{1} \cdot \cA_{1}(\cU) = \To_{1} \cdot H^{2}(\cU)$, i.e., 
$\To_{1}$ serves as a Beurling-Lax representer for $\cM$.
One can view this approach as a state-space adaptation of the de 
Branges-Rovnyak reproducing kernel approach (see \cite{dBR1, dBR2}); it has proved to work 
well in a number of multivariable settings (see e.g.~\cite{BBF1, 
BBF2a}).  The next theorem presents a Bergman-space analogue 
of the representation \eqref{kM1}.
\begin{theorem} \label{T:BL}
    Let $\cM$ be a closed $S_{n}$-invariant subspace of 
    $\cA_{n}(\cY)$. Then there is a partially isometric multiplier 
    $$ 
M_{F} = \begin{bmatrix} M_{F_{1}} & \cdots & M_{F_{n}} \end{bmatrix}
    \colon \; {\displaystyle\bigoplus_{j=1}^{n} \cA_{j}(\cU_{j})} \to 
    \cA_{n}(\cY)
$$
    (so $F(z)$ has the form $F(z) = \begin{bmatrix} F_{1}(z) & \cdots 
    & F_{n}(z) \end{bmatrix}$ with each $F_{j}$ holomorphic on 
    ${\mathbb D}$ with values in $\cL(\cU_{j}, \cY)$ for appropriate 
    coefficient Hilbert spaces $\cU_{j}$, ($j = 1, \dots, n$), so that 
    \begin{equation}   \label{BLrep}
	\cM = M_{F} \left( \bigoplus_{j=1}^{n} \cA_{j}(\cU_{j}) 
	\right).
  \end{equation}
    \end{theorem}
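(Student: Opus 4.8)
The plan is to run the de Branges--Rovnyak--Potapov scheme (approach (1) of the Introduction) in its Bergman incarnation, reducing everything to a factorization of the reproducing kernel of $\cM$ into $n$ pieces, one for each power $(1-z\overline{\zeta})^{-j}$, $j=1,\dots,n$. First I would invoke the opening observation of this section: since $\cM$ is $S_n$-invariant, $\cM^{\perp}$ is $S_n^*$-invariant, so by Theorem \ref{T:1.2}(3) there is an $n$-isometric, strongly stable model pair $(C,A)=(E|_{\cM^{\perp}},S_n^*|_{\cM^{\perp}})$ with $\cM^{\perp}=\operatorname{Ran}\cO_{n,C,A}$, where $\cO_{n,C,A}$ is the isometric inclusion; hence $\cG_{n,C,A}=I_{\cX}$ and the pair is exactly $n$-observable. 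Consequently $\cM$ has the reproducing kernel \eqref{kM}, which now reads $k_{\cM}(z,\zeta)=\tfrac{I_\cY}{(1-z\overline{\zeta})^n}-CR_n(zA)R_n(\zeta A)^*C^*$.

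The heart of the matter is to produce the kernel factorization \eqref{sumker}, i.e.\ to write $k_{\cM}(z,\zeta)=\sum_{j=1}^n F_j(z)\,\bk_j(z,\zeta)\,F_j(\zeta)^*$ with each $F_j$ a bounded multiplier from $\cA_j(\cU_j)$ into $\cA_n(\cY)$. I would obtain this by a descending telescoping in the resolvent power. Introduce the partial kernels $\Phi_m(z,\zeta)=\tfrac{I_\cY}{(1-z\overline{\zeta})^m}-CR_m(zA)\cG_{m,C,A}^{-1}R_m(\zeta A)^*C^*$ for $m=1,\dots,n$, so that $\Phi_n=k_{\cM}$. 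Using the Stein chain $\cG_{m,C,A}-A^*\cG_{m,C,A}A=\cG_{m-1,C,A}$ from \eqref{id0}, I expect a power-direction de Branges identity, the exact analogue (in the power variable) of the shift-direction identity \eqref{frakkerid}, asserting that each difference $\Phi_m-\Phi_{m-1}$ equals $\tfrac{F_m(z)F_m(\zeta)^*}{(1-z\overline{\zeta})^m}$, the numerator coming from a Cholesky factorization of the corresponding Stein defect exactly as $B_k,D_k$ were produced in Lemma \ref{L:5.6}; concretely $F_m$ should admit a realization $F_m(z)=D_m+zCR_m(zA)B_m$. At the bottom level $m=1$ the term $\Phi_1$ is precisely the classical de Branges--Rovnyak kernel \eqref{6.9} for the pair $(C,A)$, so $\Phi_1=\tfrac{F_1(z)F_1(\zeta)^*}{1-z\overline{\zeta}}$ with $F_1$ the associated transfer function; in particular the telescoping leaves no leftover constant term, and summing the layers yields \eqref{sumker}.

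Once \eqref{sumker} is in hand, the representation \eqref{BLrep} follows by the standard reproducing-kernel/pullback argument already used in the proof of Theorem \ref{T:1.2}(3a) (see \cite{NFRKHS}): the factorization shows each $M_{F_j}$ is a bounded multiplier and that $M_F=\begin{bmatrix}M_{F_1}&\cdots&M_{F_n}\end{bmatrix}$ maps $\bigoplus_{j=1}^n\cA_j(\cU_j)$ into $\cA_n(\cY)$ with range carrying the reproducing kernel $\sum_{j}F_j\bk_j F_j^*=k_{\cM}$; hence $\operatorname{Ran}M_F=\cH(k_{\cM})=\cM$ and, after quotienting by $\ker M_F$, $M_F$ is a partial isometry. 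Choosing each $\cU_j$ so that $\left[\begin{smallmatrix}B_j\\ D_j\end{smallmatrix}\right]$ is injective (as in Lemma \ref{L:5.6}) pins the coefficient spaces down.

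The step I expect to be the main obstacle is the power-direction telescoping, and specifically the non-invertibility of the intermediate gramians $\cG_{1,C,A},\dots,\cG_{n-1,C,A}$: by Proposition \ref{P:exactobs}(2) these need not be bounded below even though $\cG_{n,C,A}=I_\cX$ is, so the operators $\cG_{m,C,A}^{-1}$ appearing in $\Phi_m$ (and the construction of $F_1$ via \eqref{6.9}) are only formal. I would circumvent this either by a regularization $r\uparrow 1$ of the type used for \eqref{defGGamma'}, or by reading the defining relations through $\operatorname{Ran}\cG_{m,C,A}$ via Moore--Penrose inverses, and then checking that the layer identity $\Phi_m-\Phi_{m-1}=(1-z\overline{\zeta})^{-m}F_mF_m^*$, the factorability (positivity of each layer), and the multiplier bounds all survive the limit. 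Verifying the Chu--Vandermonde-type bookkeeping (cf.\ \eqref{07-1}) that makes the $n$ denominators combine correctly, and confirming that each $F_m$ is a genuine bounded multiplier $\cA_m(\cU_m)\to\cA_n(\cY)$ rather than a merely formal power series, is the remaining delicate point.
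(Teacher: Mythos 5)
Your overall architecture coincides with the paper's: reduce the theorem to the kernel factorization \eqref{sumker}, then recover \eqref{BLrep} by the reproducing-kernel pullback argument (the map $X$ and $X^{*}=M_{F}$); those two bookends are exactly as in the paper. The gap is in the middle, which is the heart of the proof. Your ``power-direction de Branges identity''
$$
\Phi_{m}(z,\zeta)-\Phi_{m-1}(z,\zeta)=\frac{F_{m}(z)F_{m}(\zeta)^{*}}{(1-z\overline{\zeta})^{m}},
\qquad
\Phi_{m}(z,\zeta)=\frac{I_{\cY}}{(1-z\overline{\zeta})^{m}}-CR_{m}(zA)\cG_{m,C,A}^{-1}R_{m}(\zeta A)^{*}C^{*},
$$
is false: the left-hand side need not even be a positive kernel, so no choice of coefficient space $\cU_{m}$ and holomorphic $F_{m}$ can realize it. Test it on the one-zero example of Section 6.5 with $n=2$ and $\alpha$ real, where $\cG_{2,C,A}=1$, $\cG_{1,C,A}=1-\alpha^{2}$, $\Phi_{1}=\frac{b_{\alpha}(z)\overline{b_{\alpha}(\zeta)}}{1-z\overline{\zeta}}$ and $\Phi_{2}=k_{\cM}$. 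A direct computation from the formulas of Section 6.5 gives
$$
(1-z\overline{\zeta})^{2}\bigl(\Phi_{2}(z,\zeta)-\Phi_{1}(z,\zeta)\bigr)
= z\overline{\zeta}\,b_{\alpha}(z)\overline{b_{\alpha}(\zeta)}
+(1-z\overline{\zeta})\,G(z)\overline{G(\zeta)},
\qquad G(z)=b_{\alpha}(z)\,\frac{\sqrt{1-\alpha^{2}}}{1-z\alpha},
$$
and the $2\times 2$ Gram matrix of this kernel at the points $\{0,t\}$ has determinant $|G(0)|^{2}\,t^{2}\,|b_{\alpha}(t)|^{2}\bigl(1-\tfrac{1-\alpha^{2}}{(1-t\alpha)^{2}}\bigr)$, which is strictly negative for real $t$ near $1$ (e.g.\ $\alpha=\tfrac12$, $t=0.9$). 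So the layer is not factorable and the telescoping collapses at the first step.

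The reason is that your partial kernels keep the resolvent power and the Szeg\H{o} power locked together at $m$, whereas the correct telescoping (the one the paper carries out in \eqref{id-ell}) moves them in opposite directions: the leftover after $k$ steps is $C(I-zA)^{-(n-k)}\frac{\cG_{n-k,C,A}^{-1}}{(1-z\overline{\zeta})^{k}}(I-\overline{\zeta}A^{*})^{-(n-k)}C^{*}$, and each step peels one power off the resolvent by inserting the \emph{first-order} de Branges--Rovnyak identity \eqref{6.15} for an inner function $\To_{j}(z)=\Do_{j}+z\cG_{j-1,C,A}^{1/2}(I-zA)^{-1}\Bo_{j}$ built from the single Stein step $\cG_{j,C,A}-A^{*}\cG_{j,C,A}A=\cG_{j-1,C,A}$. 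Consequently the resulting factors are $F_{\ell}(z)=C(I-zA)^{-(n-\ell)}\cG_{n-\ell,C,A}^{-1/2}\To_{n+1-\ell}(z)$ --- a classical first-order transfer function pre-multiplied by a higher-order resolvent --- and not of the form $D_{m}+zCR_{m}(zA)B_{m}$ that you posit (in the example, $F_{\ell}(z)=b_{\alpha}(z)\bigl(\tfrac{\sqrt{1-|\alpha|^{2}}}{1-z\overline{\alpha}}\bigr)^{n-\ell}$). Your closing observation that the intermediate gramians $\cG_{1,C,A},\dots,\cG_{n-1,C,A}$ need not be bounded below (Proposition \ref{P:exactobs}(2)) is a legitimate concern --- the paper's formulas \eqref{6.13}--\eqref{6.14} do use $\cG_{j,C,A}^{-1/2}$ without comment --- but addressing it does not repair the proposal, because the identity it is meant to regularize is not true to begin with.
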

    
\begin{proof} To prove the theorem, it suffices to produce 
    coefficient Hilbert spaces $\cU_{1}, \dots, \cU_{n}$ and 
    holomorphic operator-valued functions $F_{j} \colon {\mathbb D} 
    \to \cL(\cU_{j}, \cY)$ so that
\begin{equation}   \label{sumker}
    k_{\cM}(z, \zeta) = \sum_{j=1}^{n} \frac{F_{j}(z) 
    F_{j}(\zeta)^{*}}{(1 - z \overline{\zeta})^{j}},
 \end{equation}
where $k_\cM$ is given in \eqref{kM}.
Indeed, assume that \eqref{sumker} holds. Recall that $\cA_j$ is the reproducing kernel Hilbert space with 
reproducing kernel ${\bf k}_j(z,\zeta)=(1-z\overline{\zeta})^{-j}$ and define the operator 
 $X
% = \begin{bmatrix} X_{1} \\ \vdots \\ X_{n} \end{bmatrix}
\colon \, \cM = \cH(k_{\cM}) \to 
%\bigoplus_{\ell =1}^{n}\cH_{\cU_{\ell}}(k_{\ell})   = 
\bigoplus_{\ell=1}^{n} \cA_{\ell}(\cU_{\ell})$ by
 $$
  X \colon \; k_{\cM}(\cdot, \zeta) y \mapsto \begin{bmatrix} 
  {\bf k}_{1}(\cdot, \zeta) F_{1}(\zeta)^{*}y \\ \vdots \\ {\bf k}_{n}(\cdot, 
  \zeta) F_{n}(\zeta)^{*}y \end{bmatrix}\qquad (y\in\cY, \; \zeta\in{\mathbb D}).
 $$
 Then we have for $y_i\in\cY$ and $\zeta_i\in{\mathbb D}$,
 \begin{align*}
     \left\| X \left( \sum_{j=1}^{N} k_{\cM}(\cdot, \zeta_{j}) y_{j}) 
     \right) \right\|^{2} & = 
     \left\langle \sum_{j=1}^{N} \begin{bmatrix} {\bf k}_{1}(\cdot, 
     \zeta_{j}) F_{1}(\zeta_{j})^{*} y_{j} \\ \vdots \\ {\bf k}_{n}(\cdot, 
     \zeta_{j}) F_{n}(\zeta)^{*} y_{j} \end{bmatrix} , \sum_{i=1}^{N} 
     \begin{bmatrix} {\bf k}_{1}(\cdot, \zeta_{i}) F_{1}(\zeta_{i})^{*} 
	 y_{i} \\ \vdots \\ {\bf k}_{n}(\cdot, \zeta_{i}) 
	 F_{n}(\zeta_{i})^{*} y_{i} \end{bmatrix} \right\rangle \\
	 & = \sum_{i,j=1}^{N} \left\langle \sum_{\ell=1}^{N} 
	 F_{\ell}(\zeta_{i}) {\bf k}_{\ell}(\zeta_{i}, \zeta_{j}) 
	 F_{\ell}(\zeta_{j})^{*} y_{j}, y_{i} \right\rangle_{\cY} \\
	 & = \sum_{i,j=1}^{N} \langle k_{\cM}(\zeta_{i}, \zeta_{j}) 
	 y_{j}, \, y_{i} \rangle_{\cY}= \left\| \sum_{j=1}^{N} k_{\cM}(\cdot, \zeta_{j})y_{j} 
	 \right\|^{2}_{\cM}.
\end{align*}
We conclude that $X$ extends uniquely from the span of the kernel 
functions $k_{\cM}(\cdot, \zeta) y$ ($\zeta \in {\mathbb D}$, $y \in 
\cY$) to define an isometry from $\cH(k_{\cM}) = \cM$ into the direct 
sum space $\bigoplus_{\ell = 1}^{n} \cA_{\ell}(\cU_{\ell})$.  Moreover, we 
compute, for $h_{\ell} \in \cA_{\ell}(\cU_{\ell})$ ($\ell = 1, \dots, n$),
\begin{align*}
    \left\langle \left( X^{*} \left[ \begin{smallmatrix} h_{1} \\ 
    \vdots \\ h_{n} \end{smallmatrix} \right] \right)(\zeta), \, y 
    \right\rangle_{\cY} & =
    \left\langle X^{*} \left[ \begin{smallmatrix} h_{1} \\ \vdots \\ 
    h_{n} \end{smallmatrix} \right], \, k_{\cM}(\cdot, \zeta) y 
    \right\rangle_{\cM}  \\
    & = \left\langle \left[\begin{smallmatrix} h_{1} \\ \vdots \\ h_{n} 
\end{smallmatrix} \right], \, X k_{\cM}(\cdot, \zeta) y 
\right\rangle_{\bigoplus_{\ell=1}^{n}\cA_{\ell}(\cU_{\ell})}  \\
& = \left\langle \left[ \begin{smallmatrix} h_{1} \\ \vdots \\ h_{n} 
\end{smallmatrix} \right], \left[ \begin{smallmatrix} {\bf k}_{1}(\cdot, 
\zeta) F_{1}(\zeta)^{*} y \\ \vdots \\ {\bf k}_{n}(\cdot, \zeta) 
F_{n}(\zeta)^{*} y \end{smallmatrix} \right] \right\rangle_{\bigoplus_{\ell=1}^{n} 
\cA_{\ell}(\cU_{\ell})}  \\
& = \langle F_{1}(\zeta) h_{1}(\zeta) + \cdots + F_{n}(\zeta) 
h_{n}(\zeta), \, y \rangle_{\cY}
\end{align*}
from which we conclude that 
$$
X^{*} = M_{F} \colon \left[ \begin{smallmatrix} h_{1}(z) \\ \vdots \\ 
h_{n}(z) \end{smallmatrix} \right]  \mapsto F_{1}(z) h_{1}(z) + 
\cdots + F_{n}(z) h_{n}(z)
$$
is the multiplication operator $M_{F} = \begin{bmatrix} M_{F_{1}} & 
\cdots & M_{F_{n}} \end{bmatrix}$ induced by the matrix function $F(z) 
= \begin{bmatrix} F_{1}(z) & \cdots & F_{n}(z) \end{bmatrix}$ and  
Theorem \ref{T:BL} then follows.

\smallskip

Thus the proof of Theorem \ref{T:BL} is reduced to the construction of 
holomorphic operator-valued functions $F_{1}(z), \dots, F_{n}(z)$ so 
that \eqref{sumker} holds.  The construction proceeds via an iterated 
unitary completion process as follows.  

\smallskip	

For $j=2, \dots, n$ we do a construction similar to that in 
\eqref{6.7}--\eqref{6.8} but based  on the  identity (see \eqref{id0})
\begin{equation}
{\mathcal G}_{j,C,A}-A^*{\mathcal G}_{j,C,A}A={\mathcal G}_{j-1,C,A} \quad  (j=2,\ldots,n).
\label{6.10}
\end{equation}
We find operators $\Bo_{j} \colon \, \cU_{n+1-j} \to \cX$ and 
$\Do_{j} \colon \, \cU_{n+1-j} \to \cY$ so that 
\begin{equation}
\begin{bmatrix}A & \Bo_j \\ {\mathcal G}_{j-1,C,A}^{\frac{1}{2}} &
\Do_j\end{bmatrix}\begin{bmatrix}{\mathcal G}_{j,C,A}^{-1} & 0 \\ 0 &
I_{\cU_{n+1-j}}\end{bmatrix}\begin{bmatrix}A^* & {\mathcal G}_{j-1,C,A}^{\frac{1}{2}} \\ 
\Bo_j^* & \Do_j^*\end{bmatrix}=
\begin{bmatrix}{\mathcal G}_{j,C,A}^{-1} & 0 \\ 0 & I_{\cY}\end{bmatrix}
\label{6.11}
\end{equation}
and
\begin{equation}
\begin{bmatrix}A^* & {\mathcal G}_{j-1,C,A}^{\frac{1}{2}} \\ \Bo_j^* &
\Do_j^*\end{bmatrix}\begin{bmatrix}{\mathcal G}_{j,C,A}& 0 \\ 0 &
I_{\cY}\end{bmatrix}\begin{bmatrix}A & \Bo_j \\ {\mathcal G}_{j-1,C,A}^{\frac{1}{2}} &
\Do_j\end{bmatrix}=\begin{bmatrix}{\mathcal G}_{j,C,A} & 0 \\ 0 & 
I_{\cU_{n+1-j}}\end{bmatrix}.
\label{6.12}
\end{equation}
In fact, the latter equalities determine $\Bo_j$ and $\Do_j$ uniquely up 
to a common unitary factor on  the right:
\begin{equation}
\Bo_j=({\mathcal G}_{j,C,A}^{-1}-A{\mathcal G}_{j,C,A}^{-1}A^*)^{\frac{1}{2}},\quad
\Do_j=-{\mathcal G}_{j-1,C,A}^{-\frac{1}{2}}A^*{\mathcal G}_{j,C,A}\Bo_j.
\label{6.13}
\end{equation}
We now define the functions
\begin{equation}
\To_j(z)=\Do_j+z{\mathcal G}_{j-1,C,A}^{\frac{1}{2}}(I-zA)^{-1}\Bo_j
\label{6.14}
\end{equation}
for $j=2,\ldots,n$, which are inner and satisfy the identities
\begin{equation}
\frac{I_\cY-\To_j(z)\To_j(\zeta)^*}{1-z\overline{\zeta}}=
{\mathcal G}_{j-1,C,A}^{\frac{1}{2}}(I-zA)^{-1}{\mathcal G}_{j,C,A}^{-1}
(I-\overline{\zeta}A^*)^{-1}{\mathcal G}_{j-1,C,A}^{\frac{1}{2}}.
\label{6.15}
\end{equation}

We finally define operator-valued functions $F_{\ell}(z) \colon \, \cU_{\ell} \to \cY$ via
\begin{align} 
  &  F_{\ell}(z) = C (I -zA)^{-(n-\ell)}\cG^{-\frac{1}{2}}_{n-\ell,C,A} 
    \To_{n+1-\ell}(z) \quad\text{for} \quad\ell = 1, 2, \dots, n-1,  
    \notag \\
    & F_{n}(z) = \To_{1}(z)=\Do_1+zC(I-zA)^{-1}\Bo_1
    \label{defFell} 
 \end{align}
and claim that this choice of $F_{1}, \dots, F_{n}$ satisfies 
 \eqref{sumker} (with coefficient spaces $\cU_{j} = \cX$ for $j=1, 
 \dots, n-1$). Indeed, substituting equality \eqref{6.15} (for $j=n$) into the 
formula \eqref{kM} for $k_{\cM}$ and then making use of formula \eqref{defFell} for $F_1$,
we have
 \begin{align*}
k_{\cM}(z, \zeta) & = \frac{I_{\cY}}{(1 - z \overline{\zeta})^{n}} - C(I - 
     zA)^{-(n-1)} \cG^{-\frac{1}{2}}_{n-1,C,A} \left[ \frac{ I - \To_{n}(z) 
     \To_{n}(\zeta)^{*}}{1 - z \overline{\zeta}} \right] \\
    & \qquad\qquad\qquad\qquad  \cdot \cG^{-\frac{1}{2}}_{n-1,C,A} (I - \overline{\zeta} 
     A^{*})^{-(n-1)} C^{*}  \\
     & =  \frac{I_{\cY}}{(1 - z \overline{\zeta})^{n}} + 
     \frac{F_{1}(z) F_{1}(\zeta)^{*}}{1 - z \overline{\zeta}}\\
& \qquad- C(I - zA)^{-(n-1)} \frac{\cG^{-1}_{n-1,C,A}}{1 - z \overline{\zeta}} 
     (I - \overline{\zeta} A^{*})^{-(n-1)} C^{*}.
 \end{align*}
 One can now use an inductive argument to show that in general
 \begin{align}  
k_{\cM}(z, \zeta) =& \frac{I_{\cY}}{(1 - z \overline{\zeta})^{n}}
    + \sum_{\ell=1}^{k} \frac{F_{\ell}(z) F_{\ell}(\zeta)^{*}}{(1 -
     z \overline{\zeta})^{\ell}} \notag \\
&     - C (I - zA)^{-(n - k)} 
     \frac{\cG^{-1}_{n-k, C,A}}{(1 - z \overline{\zeta})^{k}} 
     (I - \overline{\zeta} A^{*})^{-(n-k)} C^{*}
      \label{id-ell}
 \end{align}
 for $k = 1,2, \dots, n-1$.  For the final step, we note that the 
 last term in \eqref{id-ell} with $k=n-1$ is given by
 \begin{align*}
   C(I - zA)^{-1} \frac{\cG^{-1}_{1,C,A}}{(1 - z 
    \overline{\zeta})^{n-1}} (I - \overline{\zeta} A^{*})^{-1} C^{*} 
   &= \frac{1}{(1 - z \overline{\zeta})^{n-1}} \left[ \frac{I_{\cY} - 
   \To_{1}(z) \To_{1}(\zeta)^{*}}{1 - z \overline{\zeta}}
   \right] \\
   & = \frac{I_{\cY}}{(1 - z \overline{\zeta})^{n}} - 
   \frac{F_{n}(z) F_{n}(\zeta)^{*}}{(1 - z \overline{\zeta})^{n}}.
  \end{align*}
  Plugging this expression into the $k = n-1$ case of \eqref{id-ell} leaves us with 
  \eqref{sumker} as wanted.
 \end{proof}
 
 There is an alternate formula for $\To_{j}$ (and hence also for 
 $F_{\ell}$) which may prove useful in applications.
 
 \begin{proposition} \label{L:6.4}
     The function $\To_{j}$ in \eqref{6.14} for $j=2, \dots, n$ can 
     alternatively by given by
  	\begin{equation}  \label{6.16}
\To_j(z)={\mathcal G}_{j-1}^{\frac{1}{2}}(I-zA)^{-1}{\mathcal G}_{j}^{-1}(zI-A^*)\Bo_j^{-1}.
\end{equation}
Hence $F_{\ell}(z)$ can alternatively be given for  $\ell = 1, 2, \dots, n-1$ by
\begin{align}
    F_{\ell}(z) & = C (I - zA)^{-(n-\ell)} 
    \cG^{-\frac{1}{2}}_{n-\ell,C,A} \To_{n+1-\ell}(z)  \notag \\
    & = C (I - zA)^{-(n+1-\ell)} \cG^{-1}_{n+1-\ell}(zI - A^{*}) 
    \Bo_{n+1-\ell}^{-1}.
  \label{Fell-new}
 \end{align}
    \end{proposition}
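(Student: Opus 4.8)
\smallskip

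The plan is to establish the two displayed formulas in turn: first the alternative realization \eqref{6.16} for $\To_j$, and then to deduce \eqref{Fell-new} for $F_\ell$ by a direct substitution into the definition \eqref{defFell}. Throughout I abbreviate $\cG_m=\cG_{m,C,A}$ and treat $\cG_j$ and $\Bo_j$ as invertible, as is already implicit in \eqref{6.13}--\eqref{6.14}. The starting point for \eqref{6.16} is the realization \eqref{6.14}, namely $\To_j(z)=\Do_j+z\cG_{j-1}^{1/2}(I-zA)^{-1}\Bo_j$. The idea is to factor the resolvent $\cG_{j-1}^{1/2}(I-zA)^{-1}$ out of both terms on the left; writing $\Do_j=\cG_{j-1}^{1/2}(I-zA)^{-1}(I-zA)\cG_{j-1}^{-1/2}\Do_j$ gives
\[
\To_j(z)=\cG_{j-1}^{1/2}(I-zA)^{-1}\Big[(I-zA)\cG_{j-1}^{-1/2}\Do_j+z\Bo_j\Big],
\]
so the whole claim reduces to identifying the bracketed operator with $\cG_j^{-1}(zI-A^*)\Bo_j^{-1}$.

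For that identification I would substitute the explicit values from \eqref{6.13}, together with $\Bo_j^2=\cG_j^{-1}-A\cG_j^{-1}A^*$ and $\Do_j=-\cG_{j-1}^{-1/2}A^*\cG_j\Bo_j$, and clear the inverse $\Bo_j^{-1}$ by multiplying the target identity on the right by $\Bo_j$; it then suffices to verify
\[
\big[(I-zA)\cG_{j-1}^{-1/2}\Do_j+z\Bo_j\big]\Bo_j=\cG_j^{-1}(zI-A^*).
\]
The one nontrivial input is the Stein identity \eqref{6.10} in the form $A^*\cG_jA=\cG_j-\cG_{j-1}$, which yields $A^*\cG_j\Bo_j^2=A^*-(\cG_j-\cG_{j-1})\cG_j^{-1}A^*=\cG_{j-1}\cG_j^{-1}A^*$ and hence $\cG_{j-1}^{-1/2}\Do_j\Bo_j=-\cG_{j-1}^{-1}A^*\cG_j\Bo_j^2=-\cG_j^{-1}A^*$. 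Substituting this into the left-hand side and expanding $z\Bo_j^2=z(\cG_j^{-1}-A\cG_j^{-1}A^*)$, the two $A\cG_j^{-1}A^*$-terms cancel and what remains is exactly $z\cG_j^{-1}-\cG_j^{-1}A^*=\cG_j^{-1}(zI-A^*)$, as required.

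The only genuine pitfall I anticipate is the very first factorization: one must pull $(I-zA)^{-1}$ out on the \emph{left}, and resist the temptation to clear it by left-multiplying by $(I-zA)$, since $\cG_{j-1}^{1/2}$ and $(I-zA)$ do not commute and such a move silently introduces the spurious factor $\cG_{j-1}^{-1/2}(I-zA)\cG_{j-1}^{1/2}(I-zA)^{-1}\neq I$. Once \eqref{6.16} is in hand, \eqref{Fell-new} is immediate: setting $j=n+1-\ell$ (so that $j-1=n-\ell$) in \eqref{6.16} and inserting the result into $F_\ell(z)=C(I-zA)^{-(n-\ell)}\cG_{n-\ell}^{-1/2}\To_{n+1-\ell}(z)$ from \eqref{defFell}, the factors $\cG_{n-\ell}^{-1/2}\cG_{n-\ell}^{1/2}$ collapse to the identity and the two resolvent powers combine to $(I-zA)^{-(n+1-\ell)}$, leaving $C(I-zA)^{-(n+1-\ell)}\cG_{n+1-\ell}^{-1}(zI-A^*)\Bo_{n+1-\ell}^{-1}$, precisely \eqref{Fell-new}.
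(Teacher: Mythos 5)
Your proposal is correct and follows essentially the same route as the paper: both start from \eqref{6.14}, substitute the formula $\Do_j=-\cG_{j-1}^{-1/2}A^*\cG_j\Bo_j$ from \eqref{6.13}, factor $\cG_{j-1}^{1/2}(I-zA)^{-1}$ out on the left, and reduce the claim to an operator identity verified via the Stein relation \eqref{6.10}, with the second formula \eqref{Fell-new} following by direct substitution into \eqref{defFell}. The only difference is cosmetic: where the paper finishes by introducing and inverting the auxiliary operator $I+A\cG_{j-1}^{-1}A^*\cG_j$, you clear $\Bo_j^{-1}$ by right-multiplying by $\Bo_j$ and use the single computation $A^*\cG_j\Bo_j^2=\cG_{j-1}\cG_j^{-1}A^*$, which is a slightly more direct way of invoking the same identity.
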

    
\begin{proof}
In the proof, we  shorten notation ${\mathcal G}_{C,A,j}$ to  ${\mathcal G}_j$.
We substitute the second equality from \eqref{6.13} into \eqref{6.14} to get 
\begin{align}
\To_j(z)=&-{\mathcal G}_{j-1}^{-\frac{1}{2}}A^*{\mathcal G}_{j}B_j+z{\mathcal 
G}_{j-1}^{\frac{1}{2}}(I-zA)^{-1}\Bo_j\notag\\
=&{\mathcal G}_{j-1}^{\frac{1}{2}}(I-zA)^{-1}\left[
-(I-zA){\mathcal G}_{j-1}^{-1}A^*{\mathcal G}_{j}+zI\right]\Bo_j\notag\\
=&{\mathcal G}_{j-1}^{\frac{1}{2}}(I-zA)^{-1}\left[z(I+A{\mathcal G}_{j-1}^{-1}A^*{\mathcal 
G}_{j})-{\mathcal G}_{j-1}^{-1}A^*{\mathcal G}_{j}\right]\Bo_j.\label{6.17}
\end{align}
We next observe the equalities
$$
{\mathcal G}_{j-1}^{-1}A^*{\mathcal G}_{j}\left(
I+A{\mathcal G}_{j-1}^{-1}A^*{\mathcal G}_{j}\right)^{-1}
={\mathcal G}_{j}^{-1}A^*{\mathcal G}_{j},
$$
and 
$$
{\mathcal G}_{j}+{\mathcal G}_{j}A {\mathcal G}_{j-1}^{-1}A^*{\mathcal G}_{j}
=({\mathcal G}_{C,A,j}^{-1}-A{\mathcal G}_{C,A,j}^{-1}A^*)^{-1},
$$
which both follow from \eqref{6.10}. We then use these equalities along with 
\eqref{6.17} and the first formula in \eqref{6.14} to get \eqref{6.16}:
\begin{align}
\To_j(z)=&{\mathcal G}_{j-1}^{\frac{1}{2}}(I-zA)^{-1}\left[zI-{\mathcal G}_{j}^{-1}A^*
{\mathcal G}_{j}\right]
(I+A{\mathcal G}_{j-1}^{-1}A^*{\mathcal G}_{j})\Bo_j\notag\\
=&{\mathcal G}_{j-1}^{\frac{1}{2}}(I-zA)^{-1}{\mathcal G}_{j}^{-1}(zI-A^*)({\mathcal
G}_{j}+{\mathcal G}_{j}A {\mathcal G}_{j-1}^{-1}A^*{\mathcal G}_{j})\Bo_j\notag\\
=&{\mathcal G}_{j-1}^{\frac{1}{2}}(I-zA)^{-1}{\mathcal G}_{j}^{-1}(zI-A^*)({\mathcal 
G}_{j}^{-1}-A{\mathcal G}_{j}^{-1}A^*)^{-1}\Bo_j\notag\\
=&{\mathcal G}_{j-1}^{\frac{1}{2}}(I-zA)^{-1}{\mathcal G}_{j}^{-1}(zI-A^*)\Bo_j^{-1}.\notag
\end{align}
\end{proof}

Given a shift invariant subspace $\cM$, let us refer to the function 
$F = \begin{bmatrix} F_{1} & \cdots & F_{n}\end{bmatrix}$ constructed 
as in Theorem \ref{T:BL} as the {\em partially isometric Bergman inner 
function} associated with $\cM$.

\subsection{The second approach: shift-invariant subspaces
contractively included in $\cA_n(\cY)$}  \label{S:c-i-sub}

Let us say that the Hilbert space  $\cM$ is contractively included in
the Hilbert space $\cH$ if $\cM \subset \cH$ as sets and moreover
$ \|m\|_{\cH} \le \| m\|_{\cM}$ for all $m \in \cM$.
We also say that an $\cL(\cU,\cY)$-valued function $\Theta$ is a {\em contractive multiplier}
if the operator $M_{\Theta}: \, f(z) \mapsto \Theta(z) \cdot f(z)$ of multiplication
by $\Theta$ defines a contractive operator from  $\cA_n(\cU)$  to $\cA_n(\cY)$.

\begin{theorem} \label{T:4.1} 
A Hilbert space $\cM$ is such that
\begin{enumerate}
\item $\cM$ is contractively included in $\cA_n(\cY)$, i.e. $\cM 
\subset \cA_{n}(\cY)$ with $\| f\|_{\cA_{n}(\cY)} \le \| f \|_{\cM}$ 
for all $f \in \cM$,
\item $\cM$ is $S_n$-invariant: $S_n\cM\subset\cM$,
\item the operator $A=(S_n\vert_{\cM})^*$ is a strongly stable $n$-hypercontraction,
\end{enumerate}
if and only if there is a Hilbert space $\cU$ and a
contractive multiplier $\Theta$ so that
\begin{equation}  \label{4.0} 
\cM = \Theta\cdot \cA_n({\cU})
\end{equation}
with lifted norm
\begin{equation}  \label{4.1}
\| \Theta\cdot f \|_{\cM} = \| Q f \|_{\cA_n({\cU})}
\end{equation}
where $Q$ is the orthogonal projection onto $(\operatorname{Ker} \, M_{\Theta})^\perp$.
        \end{theorem}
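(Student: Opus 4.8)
The plan is to prove the two implications separately, using the $n$-isometric model supplied by Theorem~\ref{T:1.3}(2) together with the principle that a bounded operator intertwining the Bergman shifts on two copies of an $\cA_n$-space is automatically a multiplication operator. Throughout, the adjoint defining $A=(S_n|_\cM)^*$ is taken in the intrinsic $\cM$-inner product, and the essential difficulty is that $\cM$ is only \emph{contractively} (not isometrically) included, so the geometry of $\cM$ and its embedding into $\cA_n(\cY)$ must be tracked by two separate operators.

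For the easy implication, suppose $\cM=\Theta\cdot\cA_n(\cU)$ with $\Theta$ a contractive multiplier and lifted norm \eqref{4.1}. Writing $\cN=(\operatorname{Ker}M_\Theta)^\perp$, condition \eqref{4.1} says precisely that $M_\Theta$ is a coisometry of $\cA_n(\cU)$ onto $(\cM,\|\cdot\|_\cM)$, so $W:=M_\Theta|_\cN\colon\cN\to\cM$ is unitary. Property (1) is immediate from $\|M_\Theta f\|_{\cA_n(\cY)}\le\|M_\Theta f\|_\cM$, and property (2) follows since $M_\Theta$ commutes with the shift, whence $S_n\cM=M_\Theta S_n\cA_n(\cU)\subset\cM$. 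For (3) I would note that $\operatorname{Ker}M_\Theta$ is $S_n$-invariant, so $\cN$ is $S_n^*$-invariant; using $M_\Theta S_n=S_nM_\Theta$ and $M_\Theta|_{\cN^\perp}=0$ one gets $W^{-1}(S_n|_\cM)W=P_\cN S_n|_\cN$, so that $A$ is unitarily equivalent to $(P_\cN S_n|_\cN)^*=S_n^*|_\cN$. Since $S_n^*$ is a strongly stable $n$-hypercontraction (Section~\ref{S:NC-Obs}), its restriction to the invariant subspace $\cN$ is again strongly stable and satisfies $\Gamma_{k,S_n^*|_\cN}[I_\cN]=P_\cN\Gamma_{k,S_n^*}[I]|_\cN\ge0$ for $0\le k\le n$, so $A$ is a strongly stable $n$-hypercontraction.

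For the converse I would take $\cX=\cM$ with its own norm, $A=(S_n|_\cM)^*$, and set $C=\Gamma_{n,A}[I_\cX]^{1/2}\colon\cX\to\cU$, with $\cU=\overline{\operatorname{Ran}}\,C$. By construction $\Gamma_{n,A}[I]=C^*C$, so $(C,A)$ is $n$-isometric (Definition~\ref{D:2}) and $A$ is strongly stable by (3); hence Theorem~\ref{T:1.3}(2) makes the observability operator $V:=\cO_{n,C,A}\colon(\cM,\|\cdot\|_\cM)\to\cA_n(\cU)$ an isometry. By \eqref{3.3} it satisfies $VA=S_n^*V$, so its range $\cN=\operatorname{Ran}V$ is $S_n^*$-invariant and $V^*S_n=A^*V^*=(S_n|_\cM)V^*$. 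Let $\iota\colon(\cM,\|\cdot\|_\cM)\to\cA_n(\cY)$ be the contractive inclusion from (1), which obeys $\iota(S_n|_\cM)=S_n\iota$. The key step is to form $M_\Theta:=\iota V^*\colon\cA_n(\cU)\to\cA_n(\cY)$: it is a contraction, and the two intertwining relations combine to give $M_\Theta S_n=\iota(S_n|_\cM)V^*=S_n\iota V^*=S_nM_\Theta$. A bounded operator intertwining the shifts $S_n$ on $\cA_n(\cU)$ and on $\cA_n(\cY)$ is multiplication by a holomorphic $\cL(\cU,\cY)$-valued function (its adjoint carries each reproducing kernel $\bk_n(\cdot,\zeta)$ on $\cA_n(\cY)$ to an $S_n^*$-eigenvector at $\overline\zeta$ on $\cA_n(\cU)$, necessarily a kernel function), so $M_\Theta$ is multiplication by a contractive multiplier $\Theta$. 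Finally $M_\Theta\cA_n(\cU)=\iota(V^*\cA_n(\cU))=\iota\cM=\cM$ as subsets of $\cA_n(\cY)$; and since $\iota$ is injective, $M_\Theta f$ corresponds to $V^*f\in\cM$, with $\|V^*f\|_\cM=\|P_\cN f\|=\|Qf\|$ where $Q=P_\cN$ projects onto $(\operatorname{Ker}M_\Theta)^\perp=\cN$. This yields exactly \eqref{4.0}--\eqref{4.1}.

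The main obstacle is the converse, and within it the passage from abstract operator data to a genuine multiplier. One must first produce an isometric $\cA_n(\cU)$-model $V$ for the space $(\cM,\|\cdot\|_\cM)$ in its \emph{own} norm, which is precisely what Theorem~\ref{T:1.3}(2) delivers and why the strong-stability and $n$-hypercontractivity hypotheses in (3) are the exact input needed; and one must then recognize that the composite $\iota V^*$ of the contractive inclusion with the model isometry intertwines the shift and is therefore a multiplication operator. That $\cM$ is in general only contractively included is what forces this two-realization device: $V$ encodes the intrinsic $\cM$-geometry while $\iota$ encodes the embedding into $\cA_n(\cY)$, and it is their product $\iota V^*$ that produces $\Theta$ with the correct range and lifted norm.
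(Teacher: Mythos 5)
Your proposal is correct and follows essentially the same route as the paper: the converse is the same construction $C^*C=\Gamma_{n,A}[I]$, $V=\cO_{n,C,A}$ isometric by strong stability and $n$-isometricity, and $M_\Theta=\iota V^*$ recognized as a contractive multiplier via the shift-intertwining, with the lifted norm coming from $\|V^*f\|_\cM=\|Qf\|$. The sufficiency direction likewise matches — your unitary equivalence $A\cong S_n^*|_{\cN}$ is just a repackaging of the paper's formula $A^j\colon\Theta f\mapsto\Theta S_n^{*j}Qf$.
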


\begin{proof}  We first verify sufficiency. Suppose that $\cM$ has the form \eqref{4.0} for a
contractive multiplier $\Theta$ with $\cM$-norm given by \eqref{4.1}.  Since
$\|M_{\Theta}\| \le 1$ it follows that
$$
\|\Theta f\|_{\cA_n(\cY)} =  \|M_\Theta Qf\|_{\cA_n(\cY)}\le \|Qf\|_{\cA_n(\cU)}=\|\Theta f\|_{\cM}
$$
i.e., (1) holds. From the intertwining equality $S_nM_{\Theta} =
M_{\Theta}S_n$, property (2) follows. The latter  intertwining equality
also implies $M_\Theta
S_n\vert_{\operatorname{Ker}M_\Theta}=0$ which can be written equivalently in terms of the
orthogonal projection $Q$ onto $(\operatorname{Ker}
M_{\Theta})^{\perp}\subset \cA_n({\cU})$ as $QS_n(I-Q)=0$. Thus, we have
\begin{equation}  \label{4.2}
Q S_n = Q S_nQ\quad \text{and}\quad S_n^{*} Q= QS_n^{*}Q.
\end{equation}
Furthermore, for every $f, \, g\in \cA_n(\cU)$, we have
\begin{align*}
\langle \Theta g, \, A\Theta f\rangle_{\cM}=&\langle S_n\Theta g, \, \Theta f\rangle_{\cM}
=\langle \Theta S_n g, \, \Theta f\rangle_{\cM}=\langle Q S_n g, \, f\rangle_{\cA_n(\cU)}\\
=&\langle Q S_n Qg, \, f\rangle_{\cA_n(\cU)}=\langle Q g, \, S_n^*Qf\rangle_{\cA_n(\cU)}=
\langle \Theta g, \, \Theta S_n^* Qf\rangle_{\cM},
\end{align*}
which implies that $A: \, \Theta f\to \Theta S_n^* Qf$. Iterating the latter formula gives
\begin{equation}
A^j: \, \Theta f\to \Theta S_n^{*j} Qf\quad\mbox{for}\quad j\ge 0.
\label{4.4}
\end{equation}
Since $S_n^*$ is a strongly stable $n$-hypercontraction on $\cA_n(\cY)$, we conclude from
\eqref{4.2},
\eqref{4.4}, and \eqref{4.1}  that
\begin{align*}
\lim_{N\to \infty}\|A^j\Theta f\|_{\cM}=&\lim_{N\to \infty}\|\Theta S_n^{*j} Qf\|_{\cM}\\
=&\lim_{N\to \infty}\|Q S_n^{*j} Qf\|_{\cA_n(\cU)}=\lim_{N\to \infty}\|S_n^{*j} Qf\|_{\cA_n(\cU)}=0
\end{align*}
and also
$$
\sum_{j=0}^k(-1)^j\bc{k \\ j}\|A^j\Theta f\|^2_{\cM}= 
\sum_{j=0}^k(-1)^j\bc{k \\ j}\|S_n^{*j}Qf\|^2_{\cA_n(\cU)}\ge 0
$$
for $k=1,\ldots,n$ which shows that $A$ is strongly stable $n$-hypercontraction on $\cM$
and therefore completes the proof of sufficiency.

\smallskip

Suppose now that the Hilbert space $\cM$ satisfies conditions (1), (2), (3) in the
statement of the theorem. Using hypothesis (2) we can define the operator $A:=(S_n\vert_{\cM})^*$
on $\cM$ and since it is $n$-hypercontractive by hypothesis (3), the operator $\Gamma_{A,n}(I)$
is positive semidefinite. Choose the coefficient Hilbert space $\cU$ so that
$$
\operatorname{dim} \cU = \operatorname{rank} \Gamma_{A,n}(I)
$$
and then choose the operator $C \colon \cM \to \cU$ so that
$$
C^{*}C = \Gamma_{n,A}(I)=\sum_{j=0}^n (-1)^j\bc{n \\ j}A^{*j}A^j\ge 0.
$$
Then $(C, A)$ is an $n$-isometric pair and, since $A$ is strongly stable by hypothesis (3).
it follows from  part (2) of Proposition \ref{P:2-1.1} that the observability operator
$$
\cO_{n,C, A} \colon f \mapsto C(I - zA)^{-n} f
$$
is an isometry from $\cM$ into $\cA_n(\cY)$. By part (1) of Theorem \ref{T:1.2}, we have
the intertwining equality $S_n^*\cO_{n,C,A} =\cO_{n,C,A}A$. Taking adjoints then gives
\begin{equation}  
\label{4.5}
(\cO_{n,C,A})^*S_n=A^*(\cO_{n,C,A})^*.
\end{equation}
The inclusion map $\iota \colon \cM \to \cA_n(\cY)$ is a contraction by hypothesis (1).
Moreover, $\iota A^*=S_n \iota: \, \cM\to \cA_n(\cY)$. Therefore the operator
$$
R = \iota \circ (\cO_{C, A,n})^{*} \colon \cA_n(\cU)\to \cA_n(\cY)
$$
is a contraction and
$$
RS_n=\iota (\cO_{n,C, A})^{*}S_n=\iota A^*(\cO_{n,C, A})^{*}=
S_n \iota (\cO_{n,C, A})^{*}=S_nR.
$$
Therefore (see \cite{olieot}) $R$ is a multiplication operator, i.e., there is a contractive
multiplier $\Theta$ so that $R = M_{\Theta}$. Since
$\cO_{n,C, A} \colon \cM \to \cA_n(\cY)$ is an isometry, it follows that
$\operatorname{Ran} (\cO_{n,C,A})^{*} = \cM$ and also that $\cM = \Theta \cdot
\cA_n(\cU)$ with $\cM$-norm given by \eqref{4.1}.  This completes the proof.
\end{proof}

\begin{remark} \label{R:c-i-ss}
    {\em  It is of interest to consider Theorem \ref{T:4.1}
    for the case where condition (1) is strengthened to
    \begin{itemize}
        \item[(1$^{\prime}$)] {\em $\cM$ is isometrically contained in
        $\cA_{n}(\cY)$}.
\end{itemize}
For the special case where $n=1$ (so $\cA_{1}(\cY)$ is the Hardy
space $H^{2}(\cY)$),  it is not difficult to see
that condition (3) is a consequence of (1$^{\prime}$) and (2).  Then
the representer $\Theta$ is a coisometric multiplier and
the proof of Theorem \ref{T:4.1} reduces essentially to that of
Rosenblum-Rovnyak (see \cite{RRbook}) for the case $n=1$.
The same approach has been adapted to the multivariable setting of the
Drury-Arveson space in Arveson \cite{Arv1998} and McCullough-Trent \cite{McCT2000} as well
as the freely noncommutative lift of the Drury-Arveson spaces to the
Fock space \cite[Theorem 2.14]{BBF1}.  For the case $n>1$,
there is a new phenomenon: it is not the case that (1$^{\prime}$) and (2) imply (3):
indeed, it is known that any Hilbert space operator can be recovered
up to unitary equivalence as the compression of the Bergman shift
$S_{2}$ to the orthogonal difference $\cM \ominus \cN$ of two nested  
$S_{2}$-invariant subspaces $\cN \subset \cM$; in particular, the
$2$-hypercontractivity property of $S_{2}^{*}$ is not preserved
when one considers $(S_{2}|_{\cM})^{*}$ for an $S_{2}$-invariant
subspace $\cM$. A second manifestation of  the inapplicability of
Theorem \ref{T:4.1} to general isometrically-included  
$S_{n}$-invariant subspaces of $\cA_{n}(\cY)$ is given by Theorem
\ref{T:BL}: $S_{n}$-invariant subspaces of $\cA_{n}(\cY)$ are modeled
by a coisometric multiplier acting on a direct-sum Bergman space $\bigoplus_{j=1}^{n}
\cA_{j}(\cU_{j})$ rather than just on a single Bergman space
$\cA_{n}(\cU)$.
}\end{remark}

\subsection{The third approach:  Bergman-inner families} 

Given an $S_{n}$-invariant subspace $\cM$ of $\cA_{n}(\cY)$, from the 
fact that
$$
\bigcap_{k ge o} S_{n}^{k} \cM \subset \bigcap_{k \ge 0} S_{n}^{k} 
\cA(\cY) = \{0\}
$$
one can see that we always have the orthogonal-sum decomposition for 
$\cM$:
\begin{equation}  \label{cmdec}
 \cM = \bigoplus_{k=0}^{\infty} \left( S^{k} \cM \ominus S^{k+1} \cM 
 \right).
\end{equation}
The space $S^k_n\cM$ has reproducing kernel $(z,\zeta)\to
z^k\overline{\zeta}^k k_\cM(z,\zeta)$, but since the operator $S^k_n: \, \cA_n(\cY)\to \cA_n(\cY)$
is only a contraction  and not an isometry if $n>1$, the metric induced by the above reproducing kernel
is different from the metric of $\cA_n(\cY)$. To get the reproducing 
kernel for $S^k_n\cM$ consistent with the 
metric of $\cA_n(\cY)$,  we need to consider backward-shifted 
version $ \Ob_{n,k,C,A}$ of the observability operator $\cO_{n,C,A}$ and the 
backward-shifted version $\Gr_{n,k,C,A}$ of the observability gramian $\cG_{n,C,A}$ 
introduced  in Section \ref{S:obs} above. It turns out that the space  $(S^k_{n} \cM)^{\perp}$ has a nice
characterization in terms of the operator ${\Ob}_{n,k,C,A}$.
\begin{proposition}   \label{P:SMperp}
    The space $(S^k_{n} \cM)^{\perp}$ is characterized as
 \begin{equation}   \label{SMperp}
     \left( S^k_{n} \cM\right)^{\perp} = \left(\bigoplus_{j=0}^{k-1}S_n^j\cY\right)\bigoplus
    S_n^k \operatorname{Ran} {\Ob}_{n,k,C,A}
 \end{equation}
 where we identify the first term with the subspace of polynomials of degree at most $k-1$  
in $\cA_{n}(\cY)$.
 \end{proposition}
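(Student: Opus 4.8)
The plan is to reduce the claim to the preimage characterization of the annihilator $(S_n^k\cM)^\perp$ under the power $S_n^{*k}$ of the backward shift. First I would record that, because $\langle f, S_n^k m\rangle_{\cA_n(\cY)} = \langle S_n^{*k}f, m\rangle_{\cA_n(\cY)}$ for every $m\in\cM$, a function $f$ lies in $(S_n^k\cM)^\perp$ exactly when $S_n^{*k}f \in \cM^\perp$. Since, by the general observation opening Section~\ref{S:BL}, the pair $(C,A)$ is chosen so that $\cM^\perp = \operatorname{Ran}\cO_{n,C,A}$ with $\cO_{n,C,A}$ mapping $\cX$ \emph{onto} $\cM^\perp$ (in the model case $\cO_{n,C,A}$ is just the inclusion of $\cM^\perp$), this says precisely that $(S_n^k\cM)^\perp = (S_n^{*k})^{-1}\big(\operatorname{Ran}\cO_{n,C,A}\big)$.

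Next I would identify the two summands on the right-hand side of \eqref{SMperp} as, respectively, the kernel of $S_n^{*k}$ and a distinguished family of particular solutions. From the expansion \eqref{3.1a} one reads off that $S_n^{*k}f = 0$ iff $f$ is a polynomial of degree at most $k-1$, so $\ker S_n^{*k} = \bigoplus_{j=0}^{k-1}S_n^j\cY$, the first summand. For the second summand, the key input is the identity $S_n^{*k}S_n^k\Ob_{n,k,C,A} = \cO_{n,C,A}$, which is the $m=k$ instance of \eqref{jul23} in Proposition~\ref{P:cutecon} (and also follows by iterating \eqref{jul22}). This gives $S_n^{*k}\big(S_n^k\operatorname{Ran}\Ob_{n,k,C,A}\big) \subseteq \operatorname{Ran}\cO_{n,C,A} = \cM^\perp$, hence $S_n^k\operatorname{Ran}\Ob_{n,k,C,A} \subseteq (S_n^k\cM)^\perp$; and since every element of $S_n^k\operatorname{Ran}\Ob_{n,k,C,A}$ vanishes to order $k$ at the origin (see the computation in the proof of Theorem~\ref{T:frakOkernel}), it is orthogonal in $\cA_n(\cY)$ to the polynomials of degree at most $k-1$. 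This establishes that the right-hand side is contained in $(S_n^k\cM)^\perp$ and that the indicated sum is genuinely orthogonal.

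The remaining inclusion is where the two summands must combine to exhaust the whole annihilator. Given $f \in (S_n^k\cM)^\perp$, i.e. $S_n^{*k}f \in \cM^\perp = \operatorname{Ran}\cO_{n,C,A}$, I would choose $x\in\cX$ with $\cO_{n,C,A}x = S_n^{*k}f$, possible since $\cO_{n,C,A}$ is onto $\cM^\perp$. Applying the identity from the previous paragraph, $S_n^{*k}\big(f - S_n^k\Ob_{n,k,C,A}x\big) = S_n^{*k}f - \cO_{n,C,A}x = 0$, so the difference lies in $\ker S_n^{*k} = \bigoplus_{j=0}^{k-1}S_n^j\cY$. Hence $f$ decomposes as a polynomial of degree at most $k-1$ plus an element of $S_n^k\operatorname{Ran}\Ob_{n,k,C,A}$, completing the reverse inclusion and, together with the orthogonality already noted, yielding \eqref{SMperp}.

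I expect the genuinely delicate points to be bookkeeping rather than conceptual: confirming that $\cO_{n,C,A}$ is surjective onto $\cM^\perp$ (immediate from the choice of $(C,A)$ as the model pair), and noting that the set equality forces the right-hand side to be closed even though $\operatorname{Ran}\Ob_{n,k,C,A}$ need not be closed a priori — closedness is inherited from the left-hand side $(S_n^k\cM)^\perp$ once the two sets are shown to coincide.
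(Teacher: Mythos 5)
Your proof is correct and follows essentially the same route as the paper's: both reduce orthogonality to $S_n^k\cM$ to the condition that the image under $S_n^{*k}$ (equivalently, $(S_n^k)^*S_n^k$ applied to the part vanishing to order $k$) lands in $\cM^\perp=\operatorname{Ran}\cO_{n,C,A}$, with the polynomial summand accounting for $\ker S_n^{*k}$. The only cosmetic difference is that you invoke the identity $S_n^{*k}S_n^k\Ob_{n,k,C,A}=\cO_{n,C,A}$ from Proposition \ref{P:cutecon}, whereas the paper re-derives the same fact on the spot by equating Taylor coefficients in the equation $(S_n^k)^*S_n^k\widetilde f=C(I-zA)^{-n}x$.
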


\begin{proof}
We wish to characterize all functions $f(z)={\displaystyle
\sum_{j=0}^{\infty} f_{j} z^{j}}$ which are orthogonal to $S_{n}^k \cM$ 
in $\cA_{n}(\cY)$. We may write 
$f(z)  = p(z) + z^k \widetilde f(z)$ where $p(z)={\displaystyle\sum_{j=0}^{k-1} f_{j} z^{j}}$
and $\widetilde f \in \cA_{n}(\cY)$.
Clearly polynomials of degree at most $k-1$ are orthogonal to
$S^k_{n} \cM$, so it suffices to characterize which functions of the   
form $z^k \widetilde f(z)$ are orthogonal to $S^k_{n} \cM$. To this end, observe that 
$S_n^k\widetilde{f}$ is orthogonal to $S^k_{n} \cM$ if and only if the function 
$(S^k_{n})^*S^k_{n}\widetilde{f}$ belongs to $\cM^\perp= \operatorname{Ran}\cO_{n,C,A}$.
It follows from the formulas \eqref{3.1a} and \eqref{defmu} that 
$$
(S^k_{n})^*S^k_{n}: \; \sum_{j=0}^\infty \widetilde{f}_jz^j\to \sum_{j=0}^\infty \frac{\mu_{n,j+k}}{\mu_{n,j}} 
\, \widetilde{f}_jz^j.
$$
We thus conclude that $S_n^k\widetilde{f}$ is orthogonal to $S^k_{n} \cM$ if and only if there exists a vector 
$x\in\cX$ such that 
$$
\sum_{j=0}^\infty \frac{\mu_{n,j+k}}{\mu_{n,j}}
\, \widetilde{f}_jz^j=C(I-zA)^{-n}x=\sum_{j=0}^\infty \left(\frac{1}{\mu_{n,j}}\cdot CA^jx\right)z^j.
$$ 
Equating the corresponding Taylor coefficients gives
$$
\widetilde{f}_j=\frac{1}{\mu_{n,j+k}}\cdot CA^jx=\bcs{n+j+k-1\\ j+k}\cdot CA^jx\quad\mbox{for all}\quad j\ge 0,
$$
and therefore,
$$
\widetilde{f}(z)=\sum_{j=0}^\infty \widetilde{f}_jz^j=\sum_{j=0}^\infty \left(\bcs{n+j+k-1\\ j+k}\cdot 
CA^jx\right)z^j={\Ob}_{n,k,C,A}x,
$$
by \eqref{defdelta}. Thus, $\widetilde{f}\in\operatorname{Ran} {\Ob}_{n,k,C,A}$.
As the analysis is necessary and sufficient, the result follows.
    \end{proof}

    With characterization \eqref{SMperp} in hand, it is straightforward to derive the   
    kernel function for the space $S_{n}^k \cM$ with respect to the   
    metric inherited from $\cA_{n}(\cY)$.

   \begin{proposition}  \label{P:6.2}  Let $\cM$ be a closed shift-invariant subspace of
        $\cA_n(\cY)$ with reproducing kernel
$k_{\cM}$ given by \eqref{kM}. Then the reproducing kernel functions for the closures of  $S^k_n\cM$
and of $S^{k}_n\cM\ominus S^{k+1}_n\cM$ in the metric of $\cA_n(\cY)$ are given by
\begin{equation}
k_{S^k_n\cM}(z,\zeta) = z^k\overline\zeta^k
\left(\sum_{\ell=1}^{n}
\frac{\bcs{\ell+k-2 \\ \ell-1}\cdot I_{\cY}}{(1-z\overline\zeta)^{n-\ell+1}}
-CR_{n,k}(zA)\Gr_{n,k,C,A}^{-1}R_{n,k}(\zeta A)^*C^*\right),
\label{kscm}
\end{equation}
\begin{align}
k_{S^{k}_n\cM\ominus S^{k+1}_n\cM}(z,\zeta)=& z^{k}\overline\zeta^{k}
\left(\bcs{n+k-1 \\ k}\cdot I_\cY
-CR_{n,k}(zA)\Gr_{n,k,C,A}^{-1}R_{n,k}(\zeta A)^*C^* 
\right. \notag \\
& \left.  +z\overline\zeta CR_{n,k+1}(zA)\Gr_{n,k+1,C,A}^{-1}R_{n,k+1}(\zeta A)^*C^*
\right).
\label{kdif}
\end{align}
\end{proposition}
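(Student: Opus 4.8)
The plan is to obtain both kernels from the elementary principle that for an orthogonal decomposition of a reproducing kernel Hilbert space the reproducing kernels add. Since $\cM$ is $S_n$-invariant we have the nested inclusions $S^{k+1}_n\cM\subseteq S^k_n\cM\subseteq\cA_n(\cY)$, so, working throughout in the metric of $\cA_n(\cY)$, I would start from
\[
k_{S^k_n\cM}(z,\zeta)=\bk_n(z,\zeta)-k_{(S^k_n\cM)^\perp}(z,\zeta),
\]
where $\bk_n(z,\zeta)=(1-z\overline\zeta)^{-n}I_\cY$ is the reproducing kernel of $\cA_n(\cY)$ and the closure is understood on the left. This reduces the first formula to computing the kernel of the orthogonal complement $(S^k_n\cM)^\perp$.

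For that complement I would invoke Proposition \ref{P:SMperp}, which exhibits $(S^k_n\cM)^\perp$ as the orthogonal sum of the degree-$\le k-1$ polynomials $\bigoplus_{j=0}^{k-1}S^j_n\cY$ and the space $S^k_n\operatorname{Ran}\Ob_{n,k,C,A}$. The reproducing kernel of the second summand is exactly the kernel $\boldsymbol{\mathfrak{K}}_k$ furnished by Theorem \ref{T:frakOkernel}, while the kernel of the polynomial summand is $\sum_{j=0}^{k-1}\binom{n+j-1}{j}(z\overline\zeta)^j I_\cY$, since the subspaces $z^j\cY$ are mutually orthogonal in $\cA_n(\cY)$ with $\|z^j e\|^2_{\cA_n}=\mu_{n,j}\|e\|^2$ and $\mu_{n,j}^{-1}=\binom{n+j-1}{j}$. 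Adding these two kernels and subtracting the sum from $\bk_n$, the scalar part telescopes by the power series \eqref{07}: the tail $\sum_{m\ge k}\binom{n+m-1}{m}(z\overline\zeta)^m$ equals precisely $z^k\overline\zeta^k R_{n,k}(z\overline\zeta)$, and rewriting $R_{n,k}$ through the Chu--Vandermonde expansion \eqref{07-1} produces exactly the sum $\sum_{\ell=1}^n\binom{\ell+k-2}{\ell-1}(1-z\overline\zeta)^{-(n-\ell+1)}$ appearing in \eqref{kscm}; together with $-\boldsymbol{\mathfrak{K}}_k$ this gives the first formula.

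For the second formula I would use the orthogonal decomposition $S^k_n\cM=(S^k_n\cM\ominus S^{k+1}_n\cM)\oplus S^{k+1}_n\cM$ to write $k_{S^k_n\cM\ominus S^{k+1}_n\cM}=k_{S^k_n\cM}-k_{S^{k+1}_n\cM}$, and then substitute the freshly proved formula \eqref{kscm} at indices $k$ and $k+1$. The two gramian terms persist as the $R_{n,k}$- and $R_{n,k+1}$-contributions displayed in \eqref{kdif}, while the scalar parts combine into $z^k\overline\zeta^k\bigl[R_{n,k}(z\overline\zeta)-z\overline\zeta\,R_{n,k+1}(z\overline\zeta)\bigr]$, which collapses to $z^k\overline\zeta^k\binom{n+k-1}{k}I_\cY$ by the recursion \eqref{difquotRnk} of Lemma \ref{L:basic}.

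I do not anticipate a genuine obstacle: once the decomposition of Proposition \ref{P:SMperp} and the kernel of Theorem \ref{T:frakOkernel} are in hand, everything reduces to scalar bookkeeping governed by \eqref{07}, \eqref{07-1}, and \eqref{difquotRnk}. The one point requiring care is the edge case $k=0$, where the polynomial summand is empty and \eqref{07-1} must be read with the convention $\binom{-1}{0}=1$ so that $R_{n,0}=R_n$ and the formula \eqref{kscm} recovers the kernel \eqref{kM}; one should also verify cleanly that the polynomial summand contributes the claimed diagonal kernel with the weights $\mu_{n,j}$.
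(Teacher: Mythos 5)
Your proposal is correct and follows essentially the same route as the paper: both rest on Proposition \ref{P:SMperp} together with the kernel $\boldsymbol{\mathfrak K}_k$ of Theorem \ref{T:frakOkernel}, reduce the scalar bookkeeping to \eqref{07}, the Chu--Vandermonde expansion \eqref{07-1}, and the recursion \eqref{difquotRnk}, and obtain \eqref{kdif} as the difference $k_{S^k_n\cM}-k_{S^{k+1}_n\cM}$. The only cosmetic difference is that the paper first computes $k_{S^k_n\cA_n(\cY)}$ and then subtracts $\boldsymbol{\mathfrak K}_k$, whereas you subtract the polynomial kernel and $\boldsymbol{\mathfrak K}_k$ from $\bk_n$ in one step.
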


\begin{proof} By Theorem \ref{T:frakOkernel}, the kernel $\boldsymbol{\mathfrak{K}}$
    for the space $S^k_{n} \operatorname{Ran} {\Ob}_{n,k,C,A}$ (with inner
    product induced by $\cA_{n}(\cY)$) is given by \eqref{deffrakk}.
From the formula \eqref{SMperp} for $(S_{n} \cM)^{\perp}$ we deduce that
\begin{align}
    S^k_{n} \cM & = \left(\bigoplus_{j=0}^{k-1}S_n^j\cY\right)^{\perp} \bigcap 
\left(S^k_{n} \operatorname{Ran}{\Ob}_{n,k,C,A} \right)^{\perp} \notag  \\
    & = S^k_{n}\cA_{n}(\cY) \ominus ( S^k_{n} 
    \operatorname{Ran}{\Ob}_{n,k,C,A}).
    \label{SM}
\end{align}
Since the reproducing kernel for the subspace $S_n^j\cY$ of $\cA_n(\cY)$ is 
$z^j\overline{\zeta}^j\bcs{n+j-1\\ j}\cdot I_{\cY}$, we deduce that 
$S^k_{n} \cA_{n}(\cY)=\cA_n(\cY)\ominus {\displaystyle\left(\bigoplus_{j=0}^{k-1}S_n^j\cY\right)}$
has reproducing kernel
\begin{align*}
    k_{S^k_{n} \cA_{n}(\cY)}(z, \zeta) & =\frac{I_{\cY}}{(1 - z \overline{\zeta})^{n}} 
- \sum_{j=0}^{k-1}\bcs{n+j-1\\ j}z^j\overline{\zeta}^j I_{\cY}\notag\\
& = \sum_{j=k}^{\infty} \bcs{n+j-1\\ j} z^{j} \bar{\zeta}^{j} 
I_{\cY} = z^{k} \bar{\zeta}^{k} \left( \sum_{j=0}^{\infty} 
\bcs{n+j+k-1\\ j+k} z^{j} \bar{\zeta}^{j} I_{\cY} \right).
\end{align*}
If we now make use of the Chu-Vandermonde identity \eqref{ChuVan}, we can continue the 
computation as follows:
\begin{align}
    k_{S_{n}^{k} \cA_{n}(\cY)}(z, \zeta) & = z^{k} \bar{\zeta}^{k} \left( 
    \sum_{j=0}^{\infty} \sum_{\ell=1}^{n} \bcs{\ell +k-2 \\ \ell-1} 
    \cdot  \bcs{n+j-\ell\\ j} z^{j} \bar{\zeta}^{j} \right) I_{\cY}  \notag \\
    & = z^{k} \bar{\zeta}^{k} \sum_{\ell=1}^{n} \bcs{\ell + k 
    -2\\ \ell-1} \left( \sum_{j=0}^{\infty} \bcs{n+j-\ell \\ j} 
    z^{j} \bar{\zeta}^{j} \right) I_{\cY}\notag \\ 
   & = z^{k} \bar{\zeta}^{k} \sum_{\ell=1}^{n} \frac{\bcs{\ell + k 
    -2\\ \ell -1}I_{\cY}}{(1 - z \bar{\zeta})^{n-\ell +1}}.
    \label{san}
\end{align}
From the representation for $S_{n}^{k} \cM$ in \eqref{SM},  we see that
\begin{equation}
    k_{S^k_{n} \cM} (z, \zeta) = k_{S^k_{n} \cA_{n}(\cY)}(z, \zeta) -
    \boldsymbol{\mathfrak K}_k(z, \zeta)
\label{jul1}   
\end{equation}
where $\boldsymbol{\mathfrak K}_{k}$ is the reproducing kernel for 
$S_{n}^{k} \operatorname{Ran} \Ob_{n,k,C,A}$.
Combining this with \eqref{san} and the formula 
\eqref{deffrakk} for $\boldsymbol{\mathfrak K}_{k}$ leads to the 
formula \eqref{kscm} for $k_{S_{n}^{k} \cM}$.  

\smallskip 

It remains to verify the formula \eqref{kdif}.  Toward this end, note that in view of 
\eqref{jul1} and \eqref{san}, 
\begin{align*}
k_{S^{k}_n\cM\ominus S^{k+1}_n\cM}(z,\zeta)   &  =  k_{S^{k}_{n} \cM} 
(z, \zeta)-k_{S^{k+1}_{n} \cM} (z,  \zeta)\\
&=k_{S^{k}_{n} \cA_{n}(\cY)}(z, \zeta)-k_{S^{k+1}_{n} \cA_{n}(\cY)}(z, \zeta)
-\boldsymbol{\mathfrak K}_{k}(z, \zeta)+\boldsymbol{\mathfrak 
K}_{k+1}(z, \zeta)  \\
& = k_{S_{n}^{k} \cY}(z, \zeta) -\boldsymbol{\mathfrak K}_{k}(z, \zeta)+\boldsymbol{\mathfrak 
K}_{k+1}(z, \zeta)   \\
& = z^{k}\overline\zeta^{k}\bcs{n+k-1 \\ k}\cdot I_\cY 
-\boldsymbol{\mathfrak K}_{k}(z, \zeta)+\boldsymbol{\mathfrak 
K}_{k+1}(z, \zeta). 
\end{align*}
Plugging \eqref{deffrakk} into this last expression leads to 
\eqref{kdif} as wanted. 
\end{proof}
\begin{lemma}  \label{L:6.8}
Given an integer $k\ge 1$ and an exactly observable $n$-output stable pair
$(C,A)$, construct operators $B_k \in \cL(\cU_k, \cX)$ and $D_k \in \cL(\cU_k, \cY)$
as in Lemma \ref{L:5.6} and let $\Theta_{n,k}$ be the associated
function given by \eqref{m5}. Then the kernel \eqref{kdif} can be factored as
\begin{equation}
k_{S^{k}_n\cM\ominus S^{k+1}_n\cM}(z,\zeta)=z^{k}\overline{\zeta}^{k}\Theta_{n,k}(z)\Theta_{n,k}(\zeta)^*.
\label{id6}
\end{equation}
\end{lemma}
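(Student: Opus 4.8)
The plan is to observe that \eqref{id6} is, up to the prefactor $z^{k}\overline{\zeta}^{k}$, an immediate rearrangement of the kernel identity \eqref{frakkerid} of Lemma~\ref{L:frakcoisom}. First I would verify that Lemma~\ref{L:frakcoisom} is applicable: the operators $B_k$ and $D_k$ are, by construction in Lemma~\ref{L:5.6}, chosen to satisfy the weighted coisometry relation \eqref{wghtcoisom} (and not merely the inequality \eqref{contr}), which is exactly the standing hypothesis of Lemma~\ref{L:frakcoisom}. The exact observability assumption, together with Proposition~\ref{P:exactobs}, guarantees that $\Gr_{n,k,C,A}$ and $\Gr_{n,k+1,C,A}$ are strictly positive definite, so that the inverses appearing in \eqref{kdif} and \eqref{frakkerid} are meaningful. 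Hence the identity
$$
\frac{I_{\cY}}{\mu_{n,k}} - \Theta_{n,k}(z)\Theta_{n,k}(\zeta)^{*} = CR_{n,k}(zA)\Gr_{n,k,C,A}^{-1}R_{n,k}(\zeta A)^{*}C^{*} - z\overline{\zeta}\,CR_{n,k+1}(zA)\Gr_{n,k+1,C,A}^{-1}R_{n,k+1}(\zeta A)^{*}C^{*}
$$
is at our disposal.

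Next I would compare this with the parenthetical factor of \eqref{kdif}. Solving the displayed identity for $\Theta_{n,k}(z)\Theta_{n,k}(\zeta)^{*}$ gives
$$
\Theta_{n,k}(z)\Theta_{n,k}(\zeta)^{*} = \frac{I_{\cY}}{\mu_{n,k}} - CR_{n,k}(zA)\Gr_{n,k,C,A}^{-1}R_{n,k}(\zeta A)^{*}C^{*} + z\overline{\zeta}\,CR_{n,k+1}(zA)\Gr_{n,k+1,C,A}^{-1}R_{n,k+1}(\zeta A)^{*}C^{*}.
$$
Using $\mu_{n,k}^{-1}=\binom{n+k-1}{k}$, which is just the definition \eqref{defmu}, the right-hand side is term-for-term the factor inside the large parentheses in \eqref{kdif}. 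Multiplying through by $z^{k}\overline{\zeta}^{k}$ then produces exactly \eqref{id6}.

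Since the real computational content was already carried out in Lemma~\ref{L:frakcoisom} and in the derivation of \eqref{kdif} within Proposition~\ref{P:6.2}, the present argument is a one-line algebraic substitution and there is no genuine obstacle. The only point deserving care is the bookkeeping just described, namely confirming the applicability of Lemma~\ref{L:frakcoisom} through Lemma~\ref{L:5.6} and Proposition~\ref{P:exactobs}, and matching $\mu_{n,k}^{-1}$ with $\binom{n+k-1}{k}$; after these checks the factorization is immediate.
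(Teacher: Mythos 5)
Your proposal is correct and follows essentially the same route as the paper: the paper's own proof likewise invokes Lemma \ref{L:frakcoisom} to obtain \eqref{frakkerid}, multiplies by $z^{k}\overline{\zeta}^{k}$, and matches the result against \eqref{kdif}. Your additional bookkeeping (confirming \eqref{wghtcoisom} via Lemma \ref{L:5.6}, invertibility of the shifted gramians via Proposition \ref{P:exactobs}, and the identification $\mu_{n,k}^{-1}=\binom{n+k-1}{k}$) is exactly the right set of checks and is consistent with the paper's argument.
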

\begin{proof}  By Lemma \ref{L:frakcoisom}, identity \eqref{frakkerid} holds. Multiplying both parts 
of \eqref{frakkerid} by $z^k\bar\zeta^k$ and combining the obtained equality with 
\eqref{kdif} we easily get \eqref{id6}. 
\end{proof}

Let us say that a family of operator-valued functions $\{ 
\Theta_{n,k} \colon {\mathbb D} \to \cL(\cU_{k}, 
\cY)\}_{k=0}^{\infty}$ is a {\em Bergman-inner family} if, for each 
$k=0,1,2,\dots$, we have:
\begin{enumerate}
    \item $M_{\Theta_{n,k}} \colon S_{1}^{k} \cU_{k} \to \cA_{n}(\cY)$ 
    is  isometric,
    \item $M_{\Theta_{n,k}} \left( S_{1}^{k} \cU_{k} \right)$ is 
    orthogonal (in $\cA_{n}(\cY)$) to $\overline{\operatorname{span}} 
    \{ M_{\Theta_{n,\ell}} S_{1}^{\ell} \cU_{\ell} \colon \ell > k 
    \}$, and
    \item $S_{n}^{k+1} M_{\Theta_{n,k}} \cU_{k} \subset 
    \overline{\operatorname{span}} \{ S_{n}^{\ell} 
    M_{\Theta_{n,\ell}} \cU_{\ell} \colon \ell > k \}$.
  \end{enumerate}
If $\{ \Theta_{n,k}\}_{k \ge 0}$ is a Bergman-inner family and we set 
\begin{equation}   \label{tvmultop}
\cM = \bigoplus_{k=0}^{\infty} \Theta_{n,k} S_{1}^{k} \cU_{k} \subset 
\cA_{n}(\cY),
\end{equation}
it then follows that the multiplication operator
$$
M_{\Theta} = \begin{bmatrix} M_{\Theta_{0}} & M_{\Theta_{1}} & 
M_{\Theta_{2}} & \cdots \end{bmatrix}  \colon 
\bigoplus_{k=0}^{\infty} S_{1}^{k} \cU_{k} \to \cA_{n}(\cY)
$$
maps the {\em time-varying Hardy space}  
$H^{2}(\{\cU_{k}\}_{k \ge 0}): = {\displaystyle\bigoplus_{k=0}^{\infty} S_1^{k} 
\cU_{k}}$ (where ${\bf u}={\displaystyle\bigoplus_{k=0}^{\infty} z^{k} u_{k}} \in 
H^{2}(\{\cU_{k}\})$ is assigned the Hardy-space norm $\| {\bf u}\|^{2} = {\displaystyle\sum_{k=0}^{\infty} 
\| u_{k} \|^{2}}$) unitarily onto the  $S_{n}$-invariant subspace $\cM \subset \cA_{n}(\cY)$.
Putting all the pieces together, we arrive at the following converse 
of all these observations which amounts to our third analogue of the Beurling-Lax 
theorem for the Bergman space setting.

\begin{theorem}  \label{T:BL3}
    Let $\cM$ be a closed $S_{n}$-invariant subspace of 
    $\cA_{n}(\cY)$.  Then there is a Bergman inner family 
    $\{\Theta_{n,k}\}_{k \ge 0}$ so that $M_{\Theta} H^{2}(\{\cU\}_{k 
    \ge 0})$ (with $M_{\Theta}$ as in \eqref{tvmultop}).
    
    Furthermore, given the $S_{n}$-invariant subspace $\cM \subset 
    \cA_{n}(\cY)$, the associated representing Bergman-inner family 
    $\{\Theta_{n,k}\}_{k\ge 0}$ can be constructed according to the 
    following algorithm:
    \begin{enumerate}
	\item Set $\cX = \cM^{\perp}$ and define $A \in \cL(\cX)$ and 
	$C \in \cL(\cX, \cY)$ by
$$
  A = S_{n}^{*}|_{\cM^{\perp}},  \quad Cf = f(0) \quad\text{for}\quad f \in 
  \cM^{\perp}.
$$

\item Construct $\left[ \begin{smallmatrix} B_{k} \\ D_{k} 
\end{smallmatrix} \right]$ by solving the Cholesky factorization 
problem \eqref{pr6} in Lemma \ref{L:5.6}.

\item Set $\Theta_{n,k}(z) = \frac{D_{k}}{\mu_{n,k}} + z C 
R_{n,k+1}(zA) B_{k}$.
\end{enumerate}
    
  Alternatively, Bergman-inner families $\{\Theta_{n,k}\}_{k \ge 0}$ 
  can be constructed from an arbitrary strongly stable 
  $n$-hypercontraction as follows.  Let $A \in \cL(\cX)$ be any strongly 
  stable $n$-hypercontraction on a Hilbert space $\cX$ and choose $C 
  \in \cL(\cX, \cY)$ with dense range so that
  $$
  C^{*}C = (I - B_{A})^{n}[I_{\cX}]
  $$
  (One such choice is $C = D_{n,A} \colon \cX \to \cY$ where $D_{n,A} 
  = \left( \sum_{k=0}^{n} (-1)^{k} \binom{n}{k} A^{*k} A^{k} 
  \right)^{1/2}$ and $\cY = {\mathcal D}_{n,A} : = \overline{\operatorname{Ran}} 
  D_{n,A}$.)   Construct $B_{k}$, $D_{k}$ as in Step 2 above and 
  then $\Theta_{n,k}$ as in Step 3 above.  Then $\{\Theta_{n,k}\}_{k 
  \ge 0}$ is a Bergman-inner family and any Bergman-inner family 
  arises in this way. 
    \end{theorem}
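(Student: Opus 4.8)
The plan is to realize the Bergman-inner family from the canonical state-space data attached to $\cM^{\perp}$ and then read off its three defining properties from the block-orthogonal decomposition \eqref{cmdec}. First I would put $\cX = \cM^{\perp}$, $A = S_{n}^{*}|_{\cM^{\perp}}$ and $Cf = f(0)$ for $f \in \cM^{\perp}$; this is legitimate since $S_{n}$-invariance of $\cM$ makes $\cM^{\perp}$ invariant under $S_{n}^{*}$. Because $(E,S_{n}^{*})$ is $n$-isometric on $\cA_{n}(\cY)$ by \eqref{3.2}, its restriction to the invariant subspace $\cM^{\perp}$ keeps $(C,A)$ $n$-isometric, and since $S_{n}^{*}$ is strongly stable so is $A$. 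Proposition \ref{P:2-1.1}(2) then gives that $\cO_{n,C,A}$ is isometric and $(C,A)$ is exactly observable with $\cG_{n,C,A} = I_{\cX}$, in particular exactly $n$-observable. Hence Proposition \ref{P:exactobs}(3) makes every $\Gr_{n,k,C,A}$ strictly positive definite, and Lemma \ref{L:5.6} applies for each $k \ge 0$: solving the Cholesky problem \eqref{pr6} (with $\cU_{k}$ chosen so that $\left[\begin{smallmatrix} B_{k} \\ D_{k}\end{smallmatrix}\right]$ is injective) produces $B_{k},D_{k}$ for which the weighted colligation $\Xi$ of \eqref{pr7} is unitary, so that both the weighted isometry \eqref{isom} and the weighted coisometry \eqref{wghtcoisom} hold for the $\Theta_{n,k}$ defined by \eqref{m5}.

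The heart of the proof is the isometric identification
$$M_{\Theta_{n,k}} S_{1}^{k}\cU_{k} = S_{n}^{k}\cM \ominus S_{n}^{k+1}\cM .$$
On one side, \eqref{isom} yields \eqref{jul14a}, so by Lemma \ref{L:5.1}(3a) the operator $S_{n}^{k} M_{\Theta_{n,k}}$ is isometric on $\cU_{k}$; its range $M_{\Theta_{n,k}}S_{1}^{k}\cU_{k}$ is therefore the reproducing kernel Hilbert space with kernel $z^{k}\overline{\zeta}^{k}\Theta_{n,k}(z)\Theta_{n,k}(\zeta)^{*}$. On the other side, the weighted coisometry \eqref{wghtcoisom} lets me invoke Lemma \ref{L:frakcoisom} and hence Lemma \ref{L:6.8}, which compute the reproducing kernel of $S_{n}^{k}\cM \ominus S_{n}^{k+1}\cM$ to be this very kernel (the case $k=0$ is identical, with $\Gr_{n,0,C,A} = I_{\cX}$). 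Since a reproducing kernel Hilbert space is determined by its kernel, the two subspaces coincide isometrically.

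With this identification in hand, properties (2) and (3) of a Bergman-inner family drop out of \eqref{cmdec}. The blocks $S_{n}^{\ell}\cM \ominus S_{n}^{\ell+1}\cM$ are mutually orthogonal and their closed span over $\ell > k$ equals $S_{n}^{k+1}\cM = \overline{\operatorname{span}}\{M_{\Theta_{n,\ell}}S_{1}^{\ell}\cU_{\ell}:\ell>k\}$, which gives (2); moreover $S_{n}^{k+1}M_{\Theta_{n,k}}\cU_{k} = S_{n}(S_{n}^{k}\cM\ominus S_{n}^{k+1}\cM)\subset S_{n}^{k+1}\cM$, which gives (3). Summing the blocks gives $\cM = \bigoplus_{k\ge0}\Theta_{n,k}S_{1}^{k}\cU_{k}$, so $M_{\Theta}$ maps $H^{2}(\{\cU_{k}\})$ unitarily onto $\cM$; since Steps 1--3 of the stated algorithm are exactly the choices just made, this proves the existence statement and validates the algorithm at once.

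For the final assertion I would run the same construction from an abstract strongly stable $n$-hypercontraction $A$ together with any $C$ of dense range satisfying $C^{*}C = \Gamma_{n,A}[I_{\cX}]$: this identity says precisely that $(C,A)$ is $n$-isometric, whence Proposition \ref{P:2-1.1}(2) again forces $\cG_{n,C,A} = I_{\cX}$ and exact $n$-observability, and Steps 2--3 manufacture a Bergman-inner family as before. Conversely, given any Bergman-inner family, property (3) shows $\cM := \bigoplus_{k}\Theta_{n,k}S_{1}^{k}\cU_{k}$ is $S_{n}$-invariant, and the canonical pair $(E|_{\cM^{\perp}},S_{n}^{*}|_{\cM^{\perp}})$ from the first part is of exactly the required form (after replacing $\cY$ by $\overline{\operatorname{Ran}}\,C$ if the dense-range normalization is wanted), the essential uniqueness of the Cholesky data in Lemma \ref{L:5.6} matching the reconstructed family with the given one. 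I expect the one genuinely delicate step to be the kernel-matching identification in the second paragraph: everything downstream is bookkeeping with \eqref{cmdec}, but that step is where the metric constraints \eqref{isom}--\eqref{wghtcoisom} and the kernel formulas of Section \ref{S:NC-Obs} must be assembled correctly, including the boundary case $k=0$.
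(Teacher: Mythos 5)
Your proposal is correct and follows essentially the same route the paper intends: it assembles the canonical pair $(E|_{\cM^{\perp}}, S_{n}^{*}|_{\cM^{\perp}})$, the strict positivity of the shifted gramians from Proposition \ref{P:exactobs}, the Cholesky construction of Lemma \ref{L:5.6}, and the kernel identification of Lemma \ref{L:6.8} together with the decomposition \eqref{cmdec}, which is precisely the ``putting all the pieces together'' argument the paper relies on (it offers no separate proof beyond this assembly). Your explicit attention to the $k=0$ boundary case and to why restriction to $\cM^{\perp}$ preserves the $n$-isometric property fills in details the paper leaves implicit, but the approach is the same.
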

    
    \begin{example}  \label{E:BL3}  {\em By way of an example, suppose 
	that $\cM = \{0\} \subset \cA_{n}(\cY)$, so $\cX: = 
	\cM^{\perp} = \cA_{n}(\cY)$.  Then
$$
A = S_{n}^{*}, \quad C = E \colon f \in \cA_{n}(\cY) \mapsto f(0) \in 
\cY.
$$
In the proof of part (3) of Proposition \ref{P:exactobs} (see 
\eqref{jul25}), we saw that
\begin{equation}  \label{jul25'}
     \Gr_{n,k,E,S_n^*}: \; \sum_{j=0}^\infty f_jz^j \to \sum_{j=0}^\infty
\frac{\mu_{n,j}}{\mu_{n,j+k}}f_{j} z^{j}.
\end{equation}
and hence
\begin{equation}
 \Gr_{n,k,E,S_n^*}^{-1}: \; \sum_{j=0}^\infty f_jz^j \to \sum_{j=0}^\infty
\frac{\mu_{n,j+k}}{\mu_{n,j}}f_{j} z^{j}.
\label{jul26}
\end{equation}
One can then work out that
$$
\Gr^{-1}_{n,k+1,E,S_{n}^{*}} - S_{n}^{*}  \Gr^{-1}_{n,,E,S_{n}^{*}} 
S_{n} = 0, \quad  \mu_{n,k}I_{\cY} - E \Gr^{-1}_{n,k,E,S_{n}^{*}} 
E^{*} = 0.
$$
Hence, solving for $\left[ \begin{smallmatrix} B_{k} \\ D_{k} 
\end{smallmatrix} \right]$ via the Cholesky factorization problem 
\eqref{pr6} (with $(C,A) = (E, S_{n}^{*})$) leads to $\left[ 
\begin{smallmatrix} B_{k} \\ D_{k} \end{smallmatrix} \right] = \left[ 
    \begin{smallmatrix} 0 \\ 0 \end{smallmatrix} \right]$ for all 
	$k$, as is to be expected from the conclusion of Theorem 
	\ref{T:BL3} with $\cM = \{0\}$.
}\end{example}

\subsection{The fourth approach: wandering-subspace Bergman-inner functions.} 

Suppose that $\cM \subset \cA_{n}(\cY)$ is $S_{n}$-invariant.  The 
subspace 
$$
\cE: = \cM \ominus S\cM
$$
has the property that $\cE \subset 
\cM$ and $\cE \perp S_{n} \cM$.  If it is the case that $\cE$ 
generates $\cM$ in the sense that $\cM = 
\overline{\operatorname{span}} \{ S_{n}^{k} \cE \colon k=0,1,2, \dots 
\}$, then one says that $\cM$ has the {\em wandering subspace 
property} (with wandering subspace equal to $\cE$). 
For the classical Hardy-space case $n=1$, one has that 
$\cE=\Theta_0\cdot \cU_{0}$ for an 
appropriate coefficient Hilbert space $\cU_{0}$ and the inner function 
$\Theta_0$  constructed in \eqref{6.8}.  Furthermore, $\cM$ admits
an orthogonal decomposition 
\begin{equation}
\cM=\bigoplus_{j=0}^\infty S_1^j\cE
\label{st13}
\end{equation}
and this decomposition coincides with that in \eqref{cmdec}.

\smallskip

If $n>1$, the subspace $S_n^j\cE$ is orthogonal to $\cE$ for all $j\ge 1$, but it is not orthogonal 
to $S_n^m\cE$ if $1\le m\neq j$. Thus, the best one can expect is the equality 
\begin{equation}
\cM=\bigvee_{j=0}^\infty S_n^j\cE
\label{st14}
\end{equation}
which indeed holds if $n=2,3$; see \cite{ars}, \cite{sh1}, \cite{sh2} 
(see also \cite{Sutton} for a different approach). Letting $k=1$ in formula
\eqref{kdif} and recalling that ${\Gr}_{n,0,C,A}=\cG_{n,C,A}$, 
${\Ob}_{n,0,C,A}={\cO}_{n,C,A}$ (see 
\eqref{st2}) and $\mu_{n,0}=1$, we conclude that the reproducing kernel for the subspace $\cE$ equals 
\begin{align}
k_\cE(z,\zeta)=& \,
I_{\cY}-C(I-zA)^{-n}\cG_{n,C,A}^{-1}(I_{\cX}-\overline{\zeta}A^*)^{-n}C^*\notag\\
&+z\overline{\zeta}C\left(\sum_{j=1}^n(I_{\cX}-zA)^{-j}\right)\Gr_{n,1,C,A}^{-1}\left(\sum_{j=1}^n
(I_{\cX}-\overline{\zeta}A^*)^{-j}\right)C^*.
\label{kce}
\end{align}

Following \cite{oljfa, olaa} we say that the function $\Theta$ is a 
{\em wandering-subspace Bergman-inner function}  
whenever 
 \begin{enumerate}
	\item $M_{\Theta} \colon \cU_{0} \to A_{n}(\cY)$ is 
	isometric, and
	\item $\Theta \cdot \cU_{0}$ is orthogonal to $S_{n}^{\ell} 
	\Theta \cdot \cU_{0}$ for $\ell \ge 1$.
\end{enumerate}
In case $M_{\Theta} \cU_{0}  = \cE$ is the wandering subspace for the 
$S_{n}$-invariant subspace $\cM$, then we have the Beurling-Lax-type 
representation of $\cM$ as the closure of $\Theta \cdot {\mathbb C}[z] 
\otimes \cU_{0}$, where ${\mathbb C}[z]$ denotes the algebra of 
polynomials with coefficients in ${\mathbb C}$.  Construction of a 
Beurling inner function for the wandering subspace $\cE$ of $\cM$ 
amounts to focusing on the first element $\Theta_{0}$ in the 
Bergman-inner family associated with $\cM$.
Specifying Lemma \ref{L:6.8} for the case $k=0$ then leads to the 
following.

\begin{theorem}  \label{P:6.4u}
    Given a strongly stable $n$-output-pair $(C,A)$,  then there exist operators
    $B \in \cL(\cU, \cX)$ and $D \in \cL(\cU, \cY)$ which solve the
    Cholesky factorization problem
 \begin{equation}  \label{pr6u}
\begin{bmatrix}B \\ D\end{bmatrix}\begin{bmatrix}B^* & D^*\end{bmatrix}=
\begin{bmatrix}\Gr_{n,1,C,A}^{-1} & 0 \\ 0 & I_{\cY}\end{bmatrix}-
\begin{bmatrix}A \\ C\end{bmatrix}\cG_{n,C,A}^{-1}\begin{bmatrix}A^* & C^*\end{bmatrix}.
\end{equation}
Moreover, if $\Theta$ is defined by
\begin{equation}
\Theta(z)=D+zC\sum_{j=1}^n(I-zA)^{-j}B
\label{7.6u}
\end{equation}
where $\left[ \begin{smallmatrix} B \\ D \end{smallmatrix} \right]$
solves \eqref{pr6} (with $k=1$), then:
\begin{enumerate}
\item The factorization $k_\cE(z,\zeta)=\Theta(z)\Theta(\zeta)^*$ holds and therefore,
the multiplication operator $M_{\Theta}$ maps $\cU$ onto $\cE$ unitarily.

\item The subspace $\cE$ is orthogonal to $S_n^{k}\Theta_k\cdot\cU$ for every $k\ge 1$.

\item $M_{\Theta}$ is a contractive multiplier from the Hardy space 
$H^{2}(\cU_{0})$ into the Bergman space $\cA_{n}(\cY)$.  Hence, if 
$\cM$ has the wandering subspace property, we conclude that $\cM$ has 
the Beurling-Lax-type representation
$$
 \cM = \text{$\cA_{n}(\cY)$-closure of } M_{\Theta} H^{2}(\cU).
 $$
\end{enumerate}
\end{theorem}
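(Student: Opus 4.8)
The plan is to obtain Theorem \ref{P:6.4u} as the $k=0$ specialization of the machinery developed in Sections \ref{S:metric} and \ref{S:BL}, the key recognition being that the function $\Theta$ in \eqref{7.6u} is nothing but the transfer function $\Theta_{n,0}$ of \eqref{m5}. First I would record the boundary-value bookkeeping: by \eqref{st42} (equivalently \eqref{07-1}) at $k=1$ one has $R_{n,1}(zA)=\sum_{j=1}^{n}(I-zA)^{-j}$, while $\mu_{n,0}=1$, $\bcs{n-1\\ 0}=1$ and $\Gr_{n,0,C,A}=\cG_{n,C,A}$. Consequently the Cholesky problem \eqref{pr6u} is exactly the $k=0$ instance of \eqref{pr6}, and \eqref{7.6u} reads $\Theta(z)=\bcs{n-1\\ 0}D_0+zCR_{n,1}(zA)B_0$, i.e.\ $\Theta=\Theta_{n,0}$. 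The hypothesis to keep in force is that $\cG_{n,C,A}$ and $\Gr_{n,1,C,A}$ be boundedly invertible, i.e.\ that $(C,A)$ be exactly $n$-observable (this is what makes \eqref{pr6u} well posed, and it holds automatically for the intended model pair $(E|_{\cM^{\perp}},S_n^{*}|_{\cM^{\perp}})$, for which $\cG_{n,C,A}=I$; by Proposition \ref{P:exactobs} it also forces $\Gr_{n,1,C,A}$ to be strictly positive definite). Existence of $B=B_0$ and $D=D_0$ solving \eqref{pr6u} is then immediate from Lemma \ref{L:5.6} at $k=0$, which moreover delivers the weighted-coisometry equality \eqref{wghtcoisom} and, under the injectivity normalization of $\cU_0=\cU$, full weighted-unitarity of $\bU_0=\sbm{A & B\\ C & D}$, so that \eqref{isom} and \eqref{contr} both hold at $k=0$.

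For part (1) I would invoke the kernel identities already proved. Specializing Proposition \ref{P:6.2} (formula \eqref{kdif}) to $k=0$, together with $\cE=\cM\ominus S_n\cM$, reproduces exactly the formula \eqref{kce} for $k_{\cE}$; specializing Lemma \ref{L:frakcoisom} (formula \eqref{frakkerid}) to $k=0$ — legitimate since \eqref{wghtcoisom} holds — expresses $I_{\cY}-\Theta(z)\Theta(\zeta)^{*}$ through the same two gramian–resolvent kernels. Adding the two (this is precisely the $k=0$ reading of Lemma \ref{L:6.8}, whose proof nowhere uses $k\ge 1$) cancels the resolvent terms and yields $k_{\cE}(z,\zeta)=\Theta(z)\Theta(\zeta)^{*}$. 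Since \eqref{jul14a} holds at $k=0$, part (a) of statement (3) of Lemma \ref{L:5.1} shows $M_{\Theta}$ is isometric on $\cU$; its range $\{\Theta\cdot u:u\in\cU\}$ is the closed subspace of $\cA_n(\cY)$ with reproducing kernel $\Theta(z)\Theta(\zeta)^{*}=k_{\cE}$, and a closed subspace of a reproducing kernel Hilbert space is determined by its kernel, so this range equals $\cE$ isometrically and $M_{\Theta}\colon\cU\to\cE$ is unitary.

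Parts (2) and (3) are then direct readings of Lemma \ref{L:5.1} at $k=0$. Part (2), the orthogonality of $\cE=M_{\Theta}\cU$ to $S_n^{m}M_{\Theta}\cU$ for every $m\ge 1$, is part (b) of statement (1) of Lemma \ref{L:5.1} with $k=0$ and $m>0$ (valid because \eqref{jul13} holds). Part (3), that $M_{\Theta}$ is a contractive multiplier from $H^{2}(\cU)$ into $\cA_n(\cY)$, is part (b) of statement (2) of that lemma with $k=0$ (valid because \eqref{jul13} and \eqref{jul14a}, hence \eqref{contr}, hold). Finally, from $M_{\Theta}(z^{k}u)=S_n^{k}M_{\Theta}u$ the image $M_{\Theta}\bigl({\mathbb C}[z]\otimes\cU\bigr)$ equals $\operatorname{span}\{S_n^{k}\Theta\cU:k\ge 0\}$; boundedness of $M_{\Theta}$ and density of polynomials in $H^{2}(\cU)$ give $\overline{M_{\Theta}H^{2}(\cU)}=\overline{\operatorname{span}}\{S_n^{k}\cE\}$, which is $\cM$ exactly when $\cM$ has the wandering-subspace property, yielding the asserted representation.

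The step requiring the most care is the bookkeeping at the boundary value $k=0$: one must verify that all binomial and $\mu_{n,k}$ factors collapse to $1$, that $\Gr_{n,0,C,A}=\cG_{n,C,A}$, and that the $k=0$ specializations of Lemma \ref{L:frakcoisom}, Lemma \ref{L:6.8}, and the formula \eqref{kdif} of Proposition \ref{P:6.2} — each nominally phrased for $k\ge 1$ and invoking $\Gr_{n,k+1,C,A}^{-1}$ — remain valid. This rests entirely on the exact-$n$-observability hypothesis (via Proposition \ref{P:exactobs}) guaranteeing invertibility of the relevant gramians; beyond confirming this specialization there is no new analytic difficulty.
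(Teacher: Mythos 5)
Your proposal is correct and follows essentially the same route as the paper: the paper obtains Theorem \ref{P:6.4u} precisely by specializing Lemma \ref{L:5.6}, Lemma \ref{L:frakcoisom}, Proposition \ref{P:6.2}/Lemma \ref{L:6.8}, and Lemma \ref{L:5.1} to $k=0$, noting in particular that part (3) is a consequence of part (2b) of Lemma \ref{L:5.1}. Your careful bookkeeping at $k=0$ (including the observation that the "$k=1$" references in the statement and before \eqref{kce} should read $k=0$, and that exact $n$-observability is the hypothesis actually needed to invert the gramians) matches and, if anything, tidies up the paper's own terse argument.
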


We note that the last statement in the theorem is a consequence of 
part (2b) of Lemma \ref{L:5.1}. We also note that 
realization formula \eqref{7.6u} appears in \cite{oljfa, olaa}) in slightly different 
terms.

\subsection{The case of zero-based shift-invariant subspaces with one 
zero}  \label{S:alpha}

In this concluding subsection we illustrate all four approaches by applying them 
to the simplest shift-invariant (zero-based) subspace 
\begin{equation}
\cM=\left\{f\in \cA_n: \; f(\alpha)=0\right\}
\label{ad1}
\end{equation}
of  $\cA_n$ where $\alpha$ is a fixed point in ${\mathbb D}$. If $n=1$, then 
$$
\cM=b_\alpha(z) \cA_1,\quad\mbox{where}\quad b_\alpha(z)=\frac{z-\alpha}{1-z\overline{\alpha}}.
$$
If $n>1$ and $\alpha=0$, the latter representation still holds. Let us assume that $\alpha\neq 0$.
The one-dimensional space $\cM^\perp$ is spanned by the function ${\bf 
k}_n(z,\alpha)=(1-z\bar\alpha)^{-n}$ and  thus 
$\cM^\perp={\rm Ran} \, \cO_{n,C.A}$ with $A=\overline{\alpha}$ and any $C\neq 0$. We choose
$C$ to make the gramian $\cG_{n,C,A}$ equal one. Thus, 
\begin{equation}
A=\overline{\alpha}, \qquad C=(1-|\alpha|^2)^{\frac{n}{2}}.
\label{ad2}
\end{equation}
Then a simple calculation reveals that
\begin{equation}
\cG_{j,C,A}=(1-|\alpha|^2)^{n-j}\quad\mbox{for}\quad j=0,\ldots,n
\label{ad3}.
\end{equation}
An induction argument making use of the weighted Stein identity \eqref{7.3} combined with 
the initial condition $\Gr_{n,0,C,A} = \Gr_{n,C,A} = 1$ then leads us 
to the formula for the shifted gramians:
\begin{equation}
\Gr_{n,k,C,A}=\frac{1}{|\alpha|^{2k}}\cdot\left(1-(1-|\alpha|^2)^{n}\cdot
\sum_{j=0}^{k-1}\bcs{n+j-1\\ j}|\alpha|^{2j}
\right)\quad\mbox{for}\quad k\ge 1.
\label{ad4}   
\end{equation}
We may also relate the numbers \eqref{ad4} to the function $R_{n,k}$ defined in \eqref{07}:
\begin{equation}
\Gr_{n,k,C,A}=(1-|\alpha|^2)^nR_{n,k}(|\alpha|^2).
\label{ad4a}
\end{equation}
Formula \eqref{kM} takes the form 
$$
k_\cM(z,\zeta)
=\frac{1}{(1-z\overline{\zeta})^n}-\frac{(1-|\alpha|^2)^n}{(1-z\overline{\alpha})^n(1-\alpha
\overline{\zeta})^n}.
$$

\smallskip 

\textbf{The first approach:} Substituting \eqref{ad2} and \eqref{ad3} into \eqref{6.13} gives
$$
B_j=(1-|\alpha|^2)^{\frac{j+1-n}{2}},\quad D_j=-\alpha\quad\mbox{for}\quad j=1,\ldots,n.
$$
We now get from \eqref{defFell}
$$
F_n(z)=-\alpha+z\frac{1-|\alpha|^2}{1-z\overline{\alpha}}=b_\alpha(z),
$$
and we get from \eqref{Fell-new}
$$
F_\ell(z)=\frac{(z-\alpha)(1-|\alpha|^2)^{\frac{n-\ell}{2}}}{(1-z\overline{\alpha})^{n+1-\ell}}
=b_\alpha(z)\cdot \left(\frac{\sqrt{1-|\alpha|^2}}{1-z\overline{\alpha}}\right)^{n-\ell}
\quad\mbox{for}\quad \ell=1,\dots,n-1.
$$
The identity \eqref{sumker} now amounts to
\begin{align*}
k_\cM(z,\zeta)
&=\frac{1}{(1-z\overline{\zeta})^n}-\frac{(1-|\alpha|^2)^n}{(1-z\overline{\alpha})^n(1-\alpha
\overline{\zeta})^n}\\&=\sum_{j=0}^{n-1}\frac{(1-|\alpha|^2)^j \,
b_\alpha(z)\overline{b_\alpha(\zeta)}}{(1-z\overline{\alpha})^{j}(1-\alpha
\overline{\zeta})^{j}(1-z\overline{\zeta})^{n-j}}
\end{align*}
and a related conclusion is that a function $f$
belongs to $\cM$ if and only if at admits a representation
\begin{equation}
f(z)=b_\alpha(z)\cdot \sum_{j=0}^{n-1} \left(\frac{\sqrt{1-|\alpha|^2}}{1-z\overline{\alpha}}
\right)^{j}\cdot g_{n-j}(z)\quad\mbox{for some}\quad g_k\in \cA_k.
\label{ad5}
\end{equation}

\smallskip

\textbf{The second approach:}
Since the function $z\mapsto (1-z\overline{\alpha})^{-j}$ belong to $H^\infty$ for all $j$, it follows that 
the factor ${\displaystyle \sum_{j=0}^{n-1} \left(\frac{\sqrt{1-|\alpha|^2}}{1-z\overline{\alpha}}
\right)^{j}\cdot g_{n-j}(z)}$ in \eqref{ad5} belongs to $\cA_n$. Therefore, the subspace $\cM$ can be represented 
as 
 $\cM=b_\alpha \cdot\cA_n$; this observation illustrates the second approach above. In this case, $\cM$ with lifted 
norm 
\eqref{4.1} is contractively included in $\cA_n$.

\smallskip

\textbf{The third approach:} We first find $B_k$ and $D_k$ satisfying \eqref{wghtcoisom} for 
$A$ and $C$ as in \eqref{ad2} and hence, with $\Gr_{n,k,C,A}$ and $\Gr_{n,k+1,C,A}$ defined as in 
\eqref{ad4}. We are seeking $B_k$ and $D_k$ subject to
\begin{align}
|B_k|^2&=\frac{1}{\Gr_{n,k+1,C,A}}-\frac{|\alpha|^2}{\Gr_{n,k,C,A}}
=\frac{(1-|\alpha|^2)^n}{\mu_{n,k}\Gr_{n,k,C,A}\Gr_{n,k+1,C,A}},\label{ad6}\\
|D_k|^2&=\mu_{n,k}-\frac{(1-|\alpha|^2)^n}{\Gr_{n,k,C,A}}=\mu_{n,k}\cdot
\frac{|\alpha|^2\Gr_{n,k+1,C,A}}{\Gr_{n,k,C,A}},\label{ad7}\\
B_k\overline{D}_k&=-\frac{\overline{\alpha}(1-|\alpha|^2)^{\frac{n}{2}}}{\Gr_{n,k,C,A}}.\label{ad8}
\end{align}
Note that the second equalities in \eqref{ad7} and \eqref{ad8} follow 
from \eqref{ad4}; alternatively one can use \eqref{ad4a} combined 
with \eqref{difquotRnk}. The essentially 
unique choice of $B_k$ and $D_k$ satisfying \eqref{ad6}-\eqref{ad8} is the following:
\begin{equation}
B_k=-\frac{\overline{\alpha}}{|\alpha|}\ 
\cdot\frac{(1-|\alpha|^2)^\frac{n}{2}}{\sqrt{\mu_{n,k}\Gr_{n,k}\Gr_{n,k+1}}},
\quad D_k=|\alpha|\cdot\sqrt{\frac{\mu_{n,k}\Gr_{n,k+1}}{\Gr_{n,k}}}.
\label{ad9}
\end{equation}
Now we arrive at the formula for $\Theta_{n,k}$:
\begin{align}
\Theta_{n,k}(z)&=\mu_{n,k}^{-1}D_k+zCR_{n,k+1}(zA)B_k\\
&=|\alpha|\cdot\sqrt{\frac{\Gr_{n,k+1}}{\mu_{n,k}\Gr_{n,k}}}
-\frac{z\overline{\alpha}(1-|\alpha|^2)^n 
R_{n,k+1}(z\overline{\alpha})}{|\alpha|\sqrt{\mu_{n,k}\Gr_{n,k}\Gr_{n,k+1}}}\notag\\
&=\frac{1}{|\alpha|\sqrt{\mu_{n,k}\Gr_{n,k}\Gr_{n,k+1}}}\cdot\left(
|\alpha|^2\Gr_{n,k+1}-(1-|\alpha|^2)^nz\overline{\alpha}R_{n,k+1}(z\overline{\alpha})\right)\notag\\
&=\frac{(1-|\alpha|^2)^n}{|\alpha|\sqrt{\mu_{n,k}\Gr_{n,k}\Gr_{n,k+1}}}
\cdot\left(|\alpha|^2R_{n,k+1}(|\alpha|^2)-
z\overline{\alpha}R_{n,k+1}(z\overline{\alpha})\right)\notag\\
&=\frac{(1-|\alpha|^2)^n}{|\alpha|\sqrt{\mu_{n,k}\Gr_{n,k}\Gr_{n,k+1}}}
\cdot\left(R_{n,k}(|\alpha|^2)-R_{n,k}(z\overline{\alpha})\right),\label{ad10}
\end{align}
where we used \eqref{ad4a} for the fourth equality and \eqref{difquotRnk} for the fifth.
For $k=0$, the latter formula gives
\begin{align}
\Theta_{n,0}(z)&=\frac{(1-|\alpha|^2)^n}{|\alpha|\sqrt{\mu_{n,0}\Gr_{n,0}\Gr_{n,1}}}
\cdot\left((1-|\alpha|^2)^{-n}-(1-z\overline{\alpha})^{-n}\right)\notag\\
&=\frac{1}{|\alpha|\sqrt{\Gr_{n,1}}}\left(1-\left(\frac{1-|\alpha|^2}{1-z\overline{\alpha}}\right)^n\right)\notag\\
&=-\frac{\overline{\alpha}b_\alpha(z)}{\sqrt{1-(1-|\alpha|^2)^n}}\cdot \sum_{j=0}^{n-1}
\left(\frac{1-|\alpha|^2}{1-z\overline{\alpha}}\right)^j. 
\label{ad11}
\end{align}
For $k\ge 1$, we use the identity
$$
\frac{1}{(1-|\alpha|^2)^{j}}-\frac{1}{(1-z\overline{\alpha})^{j}}=-\overline{\alpha}b_\alpha(z)\cdot
\sum_{r=1}^{j}\frac{1}{(1-|\alpha|^2)^{r}(1-z\overline{\alpha})^{j-r}}
$$
and formula \eqref{07-1} to get
\begin{align*} 
& R_{n,k}(|\alpha|^2)-R_{n,k}(z\overline{\alpha})=\sum_{\ell=1}^n\bcs{\ell+k-2\\ \ell-1}\left[
\frac{1}{(1-|\alpha|^2)^{n+1-\ell}}-\frac{1}{(1-z\overline{\alpha})^{n+1-\ell}}\right]\\
& \quad =-\overline{\alpha}b_\alpha(z)\sum_{\ell=1}^n\bcs{\ell+k-2\\ \ell-1}\sum_{r=1}^{n+1-\ell}
\frac{1}{(1-|\alpha|^2)^{r}(1-z\overline{\alpha})^{n+1-\ell-r}}\\
& \quad =-\overline{\alpha}b_\alpha(z)\sum_{\ell=1}^n\bcs{\ell+k-2\\ \ell-1}
\sum_{j=0}^{n-\ell} \frac{1}{(1 - |\alpha|^{2})^{n+1-j\ell} 
(1 - z \overline{\alpha})^{j}} \\
& \quad =-\overline{\alpha}b_\alpha(z)\sum_{j=0}^{n-1}\left(\sum_{\ell=1}^{n-j}\bcs{\ell+k-2 \\ 
\ell-1}\frac{1}{(1-|\alpha|^2)^{n+1-j-\ell}}\right)\cdot \frac{1}{(1-z\overline{\alpha})^{j}}\\
& \quad =-\frac{\overline{\alpha}b_\alpha(z)}{(1-|\alpha|^2)^n}\cdot 
\sum_{j=0}^{n-1}\left(\sum_{\ell=0}^{n-j-1}\bcs{\ell+k-1 \\
\ell}(1-|\alpha|^2)^{\ell}\right)\cdot\left(\frac{ 1-|\alpha|^2}{1-z\overline{\alpha}}\right)^j,
\end{align*} 
which being substituted into \eqref{ad10} gives 
$$
\Theta_{n,k}(z)=-\frac{\overline{\alpha}b_\alpha(z)}{|\alpha|\sqrt{\mu_{n,k}\Gr_{n,k}\Gr_{n,k+1}}}
\cdot\sum_{j=0}^{n-1}\left(\sum_{\ell=0}^{n-j-1}\bcs{\ell+k-1 \\
\ell}(1-|\alpha|^2)^{\ell}\right)\cdot\left(\frac{ 1-|\alpha|^2}{1-z\overline{\alpha}}\right)^j.
$$

\smallskip

\textbf{The fourth approach:}  Construction of the Bergman inner 
function associated with the wandering subspace for $\cM$ amounts to 
the construction of $\Theta_{0}$ already derived above in \eqref{ad11}.
It is easily checked that the formula \eqref{ad11} agrees with  
 \cite[formula (6) page 125]{DS} (with $n=2$ in \eqref{ad11}), and with the formula for $G_{a}(z)$ in 
\cite[page 58]{HKZ} (with $\alpha = n-2$), up to a constant 
multiplicative factor.   The derivation in both \cite{DS} and 
\cite{HKZ} is based on the work of Hedenmalm \cite{hend1} whereby 
Bergman inner functions are produced as solutions of an appropriate 
extremal problem.  We plan to discuss how our state-space methods can be used 
to solve such extremal problems directly in a future publication.

\section{Connections with time-varying linear systems theory}  
\label{S:syscon}

Here we make more explicit the connections of a Bergman-inner family 
$\{\Theta_{n,k}\}_{k \ge 0}$ and the associated realization formulas 
(see Step 3 in the algorithm in Theorem \ref{T:BL3}) with the theory 
of conservative/dissipative time-varying linear systems as presented 
e.g.~ in \cite{ABP} and \cite{OT100}.

We suppose that we are given an exactly $n$-observable pair $(C,A)$ ($A \in 
\cL(\cX)$, $C \in \cL(\cX, \cY)$).
Then Proposition \ref{P:exactobs} assures us that all the gramians 
$\Gr_{n,k,C,A}$ ($k = 0,1,2,\dots$) are strictly positive definite.  Let us introduce Hilbert spaces
$\cX_{k}$  ($k = 0,1,2, \dots$) with $\cX_{k} = \cX$ for all $k$ but 
with $\cX_{k}$ given the inner product induced by the $k$-th shifted 
gramian:
$$
  \langle x, x' \rangle_{\cX_{k}}:= \langle \Gr_{n,k,C,A} x, x' 
  \rangle_{\cX}.
$$
Similarly we set $\cY_{k} = \cY$ with $\cY_{k}$-inner product given by
$$
  \langle y, y' \rangle_{\cY_{k}}:= \left\langle \binom{n+k-1}{k} y, 
  y' \right\rangle_{\cY}.
$$
In addition suppose that operators $B_{k} \in \cL(\cU_{k}, \cX)$ and 
$D_{k} \in \cL(\cU_{k}, \cY)$ are constructed as in Step 2 of the 
algorithm in Theorem \ref{T:BL3}.  We then let $\bU_{k}$ be the 
colligation matrix
\begin{align}
  \bU_{k} & =  \begin{bmatrix} \bA_{k} & \bB_{k} \\ \bC_{k} & \bD_{k} 
\end{bmatrix}  := \begin{bmatrix} \frac{k+n}{k+1} A & \binom{k+n}{k+1} 
B_{k} \\ C & \binom{k+n-1}{k} D_{k} \end{bmatrix}  \notag \\
& = \begin{bmatrix} \frac{k+n}{k+1} I & 0 \\ 0 & I \end{bmatrix}
\begin{bmatrix} A & B_{k} \\ C & D_{k} \end{bmatrix} \begin{bmatrix} 
    I & 0 \\ 0 & \binom{k+n-1}{k} I_{\cU_{k}} \end{bmatrix}
\colon \begin{bmatrix} \cX_{k} \\ \cU_{k} \end{bmatrix} \to 
\begin{bmatrix} \cX_{k+1} \\ \cY \end{bmatrix}.
    \label{bUk}
    \end{align}
Here $\bA_{k} = \frac{k+n}{k+1} A$ with $A$ considered as acting from $\cX_{k}$ into 
$\cX_{k+1}$ and similarly $C_{k} = C $ but considered as acting from 
$\cX_{k}$ into $\cY$ while $B_{k}$ is considered as acting from 
$\cU_{k}$ into $\cX_{k}$.  The result of the construction in Theorem 
\ref{T:BL3} is that each $U_{k}: = \left[ \begin{smallmatrix} A & 
B_{k} \\ C & D_{k} \end{smallmatrix} \right]$ is unitary from $\cX_{k} \oplus 
\cU_{k}$ to $\cX_{k+1} \oplus \cY_{k}$.  One can then study the 
associated conservative discrete-time time-varying linear system
\begin{equation}   \label{tvsys}
\Sigma_{\{U_{k}\}_{k \ge 0}}\colon \left\{
\begin{array}{rcl} x(k+1) & = & A_{k} x(k) + B_{k} u(k) \\
       y(k) & = & C_{k} x(k) + D_{k} u(k).
\end{array}  \right.
\end{equation}
If we specify an 
initial condition $x(0) = 0$, then the input string $\{u(k)\}_{k \ge 
0}$ recursively uniquely determined via the system equations 
\eqref{tvsys} an output string $\{y(k)\}_{k \ge 0}$, thereby defining 
a linear operator $T_{\{\bU_{k}\}} \colon \{u(k)\}_{k \ge 0} \mapsto 
\{y(k) \}_{k \ge 0}$, the {\em input-output operator} for the linear 
system $\Sigma_{\{\bU_{k}\}_{k \ge 0}}$.
Such systems (with $k$ running over all of ${\mathbb Z}$ rather than 
just ${\mathbb Z}_{+}$) are studied in \cite{ABP} from the point of 
view of model theory for a sequence of contraction operators $T_{k} = 
A_{k}^{*} \colon \cX_{k+1} \to \cX_{k}$ and associated time-varying 
unitary dilation theory and Lax-Phillips scattering theory.   However 
the time-varying system associated with the realization of our 
Bergman-inner family of transfer functions is not given by 
$\{U_{k}\}$ but rather by $\{\bU_{k}\}_{k \ge 0}$, namely:
\begin{equation}   \label{bUk-sys}
\Sigma_{\{ \bU_{k}\}_{k \ge 0}} \colon 
\left\{ \begin{array}{rcl}
x(k+1) & =  & \bA_{k} x(k)  +  \bB_{k}  u(k) \\
  y(k) & = & \bC_{k} x(k) + \bD_{k} u(k).
  \end{array}   \right.
\end{equation}
where $\bU_{k} = \left[ \begin{smallmatrix} \bA_{k} & \bB_{k} \\ 
\bC_{k} & \bD_{k} \end{smallmatrix} \right]$ is the tweak of $U_{k}$ 
given by \eqref{bUk}. The input-output operator $T_{\{\bU_{k}\}_{k 
\ge 0}}$ associated with the system \eqref{bUk-sys} gives rise 
to an input-output operator $T_{\{\bU_{k}\}_{k \ge 0}} \colon 
\ell^{2}(\{\cU_{k}\}_{k \ge 0}) \to \ell^{2}(\cY)$ with image space
$T_{\{\bU_{k}\}_{k \ge 0}} \left( \ell^{2}(\{\cU_{k}\}_{k \ge 0}) 
\right)$ exactly  equal the set of all Taylor-coefficient strings 
$\{y(k) \}_{k \ge 0}$ for which the associated $Z$-transform $y(z) = 
\sum_{k=0}^{\infty} y(k)  z^{k}$ is in the prescribed closed $S_{n}$-invariant 
subspace $\cM$ of $\cA_{n}(\cY)$ (assuming that $\cM$ is chosen so 
that it is generated by its wandering subspace).  Moreover 
$T_{\{\bU\}_{k \ge 0}}$ is isometric if the output string 
$\{y(k)\}_{k \ge 0}$ is given the Bergman-space norm of its 
$Z$-transform $\| \sum_{k=0}^{\infty} y(k) z^{k} \|_{\cA_{n}(\cY)}$.
It is amusing that this tweak $U_{k} \mapsto \bU_{k}$ as in \eqref{bUk} of a 
standard conservative time-varying linear system 
$\Sigma_{\{U_{k}\}_{k \ge 0}}$  is the precise object required to get 
a realization theory for a Bergman-inner family $\{ \Theta_{k}\}_{k 
\ge 0}$.


\begin{thebibliography}{99}
    
\bibitem{aglerhyper}
J.~Agler, {\em Hypercontractions and subnormality}, J. Operator Theory {\bf 13} (1985), no. 2, 
203--217.    
    
%    \bibitem{AgMcC} J.~Agler and J.E.~McCarthy, {\em Pick 
%    Interpolation and Hilbert Function Spaces}, Graduate Studies in 
%    Mathematics Vol. \textbf{44}, American Mathematical Society, 
%    Providence, 2002.
 
\bibitem{ars}
A.~Aleman, S.~Richter and C.~ Sundberg, {\em Beurling's theorem for the Bergman space}, Acta Math. {\bf 177} 
(1996), no. 2, 275310.

\bibitem{ABP}  D.~Alpay, J.A.~Ball, and Y.~Peretz, {\em System 
theory, operator models and scattering:  the time-varying case}, 
J.~Operator Theory \textbf{47} (2002)l 245--286.

\bibitem{AEM} C.-G. Ambrozie, M.~Engli\v{s}, and V.~M\"uller, {\em 
Operator tuples and analytic models over general domains in ${\mathbb 
C}^{n}$}, J.~Operator Theory \textbf{47} (2002), 287--302.

\bibitem{ABFP} C.~Apostol, H.~Bercovici, C.~Foias, and C.~Pearcy, 
{\em Invariant subspaces, dilation theory, and the structure of the 
predual of a dual algebra, I}, J.~Functl.~Anal. \textbf{63} (1985), 
369--404.

   
%  \bibitem{Aron} N.~Aronszajn, {\em Theory of reproducing kernels}, 
%  Trans.~Amer.~Math.~Soc. \textbf{68} (1950), 337--404.
    
\bibitem{Arv1998} W.~Arveson, {\em Subalgebras of $C^{*}$-algebras, III. 
Multivariable operator theory}, Acta Math.~\textbf{181} (1998), 
159--228.

\bibitem{BBF1} J.A.~Ball, V.~Bolotnikov, and Q.~Fang, {\em Multivariable 
backward-shift-invariant subspaces and observability operators},  
Multidimens.~Syst.~Signal Process.~\textbf{18} (2007) no.~4, 191--248.

\bibitem{BBF2a} J.A.~Ball,V.~Bolotnikov, and Q.~Fang, {\em 
Transfer-function realization for multipliers of the ARveson space}, 
J.~Math.~Anal.~Appl.~\textbf{333} (2007) no.~1, 68--92.

\bibitem{NFRKHS} J.A.~Ball and V.~Vinnikov, \emph{Formal reproducing kernel
Hilbert spaces: the commutative and noncommutative settings}, in {\em
Reproducing Kernel Spaces and Applications} (Ed.~D.~Alpay), pp.
77--134, \textbf{OT 143}, Birkh\"auser, Basel, 2003.

\bibitem{BC} J.A.~Ball and N.~Cohen, {\em de Branges-Rovnyak operator 
models and systems theory:  a survey}, in Topics in Matrix and 
Operator Theory, Rotterdam, 1989,   (Ed.~H.~Bart, I.~Gohberg, and 
M.A.~Kaashoek), pp.~93--136, \textbf{OT 50} 
Birkh\"auser-Verlag, 1991.

\bibitem{Beurling}  A.~Beurling, {\em On two problems concerning 
linear transformations in Hilbert space}, Acta Math.~\textbf{81} 
(1949),239--255.  

\bibitem{dBR1}
{L. de} Branges and J.~Rovnyak,
Canonical models in quantum scattering theory,
in: {\em Perturbation Theory and its
Applications in Quantum Mechanics} (C.~Wilcox, ed.) pp. 295--392,
{Holt, Rinehart and Winston, New York}, 1966.

\bibitem{dBR2}
L.~de Branges and J.~Rovnyak,
{\em Square summable power series},
Holt, Rinehart and Winston, New York, 1966.

\bibitem{CC} S.~Chavan and R.E.~Curto, {\em Operators Cauchy dual to 
2-hyperexpansive operators: the multivariable case}, Integral 
Equations and Operator Theory \textbf{73} (2012), 481--516.


%\bibitem{chen}
%Y.~Chen, {\em Quasi-wandering subspaces in a class of reproducing analytic Hilbert spaces}, 
%Proc. Amer. Math. Soc. {\bf 140} (2012), 4235-4242



\bibitem{DS} P.~Duren and A.~Shuster, {\em Bergman Spaces}, 
Mathematical Surveys and Monographs \textbf{100}, American 
Mathematical Society, 2004.

\bibitem{OT100} C.~Foias, A.E.~Frazho, I.~Gohberg, and M.A.~Kaashoek, 
{\em Metric Constrained Interpolation, Commutant Lifting and 
Systems}, \textbf{OT 100} Birkh\'auser-Verlag, Basel, 1998.

\bibitem{Potapov} I.~Gohberg and L.A.~Sakhnovich (Ed.), {\em Matrix 
and operator valued functions: the Vladimir Petrovich Potapov 
memorial volume}, \textbf{OT 72} Birkh\"auser-Verlag, Basel, 1994.

\bibitem{Halmos}  P.R.~Halmos, {\em Shifts on Hilbert spaces}, 
J.~Reine Angew.~Math.~\textbf{208} (1961), 102--112.

\bibitem{HKZ}  H.~Hedenmalm, B.~Korenblum, and K.~Zhu, {\em Theory of 
Bergman Spaces}, Graduate Texts in Mathematics \textbf{199}, 
Springer, 2000.

\bibitem{hend1}
H.~Hedenmalm, A factorization theorem for square area-integrable analytic functions, J. Reine Angew. 
Math. 422 (1991), 45--68.


%\bibitem{hend2}
%H.~Hedenmalm,
%{\em A factoring theorem for the Bergman space}, Bull. London Math. Soc. {\bf 26} (1994), no. 
%2,  113--126.

\bibitem{Hor} C.~Horowitz, {\em Factorization theorems for functions 
in the Bergman spaces}, Duke Math.~J.~\textbf{44} (1977), 201--213.

%\bibitem{iziziz}
%K.~J.~Izuchi, K.~.H.~Izuchi and Y.~Izuchi, {\em Quasi-wandering subspaces in the
%Bergman space}, Integral Equations Operator Theory {\bf 67} (2010) 151--161.


\bibitem{Lax} P.D.~Lax, {\em Translation invariant subspaces}, Acta 
Math.~\textbf{101} (1959), 163--178.

\bibitem{Loehr} N.A.~Loehr, {\em Bijective Combinatorics}, Discrete 
Mathematics and its Applications (Boca Raton), Chapman \& Hall/CRC Press, Boca Raton, 
FL, 2011.

%\bibitem{McCR} S.~McCullough and S.~Richter, {\em Bergman-type 
%reproducing kernels, contractive divisors, and dilations}, 
%J.~Funct.~Analy. \textbf{190} (2002), 447--480.


\bibitem{McCT2000}  S.~McCullough and T.T.~Trent, {\em Invariant 
subspaces and Nevanlinna-Pick kernels}, J.~Funct.~Anal.~\textbf{178} 
(2000) no.~1, 226--249.

\bibitem{muller}
V.~M\"uller, {\em Models for operators using weighted shifts}, J. Operator Theory {\bf 20}
(1988), no. 1, 3--20.

\bibitem{MV} V.~M\"uller and F.H.~Vasilescu, Standard models for some 
commuting multioperators, {\em Proc.~Amer.~Math.~Soc.} \textbf{117} 
No.~4 (1993), 979--989.



\bibitem{oljfa} A.~Olofsson, {\em A characteristic operator function
for the class of $n$-hypercontractions}, J.~Funct.~Anal.~\textbf{236}
(2006),  517--545. 

\bibitem{olieot}
A.~Olofsson, {\em An operator-valued Berezin transform and the class of
$n$-hypercontractions}, Integral Equations Operator Theory  {\bf 58} (2007), no. 4,
503--549.

\bibitem{olaa}
A.~Olofsson {\em Operator-valued Bergman inner functions as transfer functions},
Algebra i Analiz  {\bf 19} (2007),  no. 4, 146--173.


%\bibitem{Popescu1989} 
%G.~Popescu, {\em Multi-analytic operators and some factorization theorems}, 
%Indiana Univ. Math. J. {\bf 38} (1989), no. 3, 693–-710.


\bibitem{RRbook}  M.~Rosenblum and J.~Rovnyak, {\em Hardy classes and 
operator theory}, Oxford Univ.~Press, New York, 1985; corrected 
reprint: Dover Publications, Meneola, NY, 1997. 

\bibitem{sh1}
S.~Shimorin, {\em Wold-type decompositions and wandering subspaces for operators close to isometries}, 
J. Reine Angew. Math. {\bf 531} (2001), 147189.

\bibitem{sh2}
S.~Shimorin, {\em On Beurling-type theorems in weighted $\ell^{2}$ and Bergman spaces},
Proc. Amer. Math. Soc. {\bf 131} (2003), no. 6, 17771787.

\bibitem{Sutton} D.J.~Sutton, {\em Structure of Invariant Subspaces 
for Left-Invertible Operators on Hilbert Space}, Virginia Tech PhD 
dissertation, August, 2010.




\end{thebibliography}
\end{document}